\crefname{algocf}{Algorithm}{Algorithms}
\crefname{equation}{}{} %remove ``Equation''
\crefname{algocf}{Algorithm}{Algorithms}
\numberwithin{equation}{section}
\newtheorem{theorem}{Theorem}[section]
\newtheorem{proposition}[theorem]{Proposition}
\newtheorem{lemma}[theorem]{Lemma}
\newtheorem{claim}[theorem]{Claim}
\crefname{claim}{Claim}{Claims}
\newtheorem{corollary}[theorem]{Corollary}
\newtheorem{conjecture}[theorem]{Conjecture}
\newtheorem*{question*}{Question}
\theoremstyle{definition}
\newtheorem{definition}[theorem]{Definition}
\newtheorem*{definition*}{Definition}
\newtheorem{setup}[theorem]{Setup}
\newtheorem{algorithm}[theorem]{Algorithm}
\theoremstyle{remark}
\newtheorem*{remark}{Remark}
\newtheorem*{notation*}{Notation}
\newtheorem{notation}[theorem]{Notation}
\newcommand{\mb}{\mathbb}
\newcommand{\mc}{\mathcal}
\newcommand{\cS}{\mathcal{S}}
\renewcommand{\epsilon}{\varepsilon}
\newcommand{\E}{\mathop{\mathbb{E}}}
\DeclareMathOperator{\ex}{ex}
\newcommand{\skel}[1]{{d_1({#1})}}
\title{Ramsey and Tur\'an numbers of sparse hypergraphs}
\author[Fox]{Jacob Fox}
\author[Sankar]{Maya Sankar}
\author[Simkin]{Michael Simkin}
\address{Department of Mathematics, Massachusetts Institute of Technology}
\email{msimkin@mit.edu}
\author[Tidor]{Jonathan Tidor}
\author[Zhou]{Yunkun Zhou}
\address{Department of Mathematics, Stanford University}
\email{\{jacobfox, mayars, jtidor, yunkunzhou\}@stanford.edu}
\thanks{Fox was supported a Packard Fellowship and by NSF Awards DMS-1800053 and DMS-2154169. Sankar was supported by a Fannie and John Hertz Foundation Fellowship and NSF Graduate Research Fellowship DGE-1656518. Simkin was partially supported by the Center of Mathematical Sciences and Applications at Harvard University. Tidor was supported by a Stanford Science Fellowship. Zhou was supported by NSF Graduate Research Fellowship DGE-1656518.}
\begin{document}

\begin{abstract}
Degeneracy plays an important role in understanding Tur\'an- and Ramsey-type properties of graphs. Unfortunately, the usual hypergraphical generalization of degeneracy fails to capture these properties. We define the skeletal degeneracy of a $k$-uniform hypergraph as the degeneracy of its $1$-skeleton (i.e., the graph formed by replacing every $k$-edge by a $k$-clique). We prove that skeletal degeneracy controls hypergraph Tur\'an and Ramsey numbers in a similar manner to (graphical) degeneracy.

Specifically, we show that $k$-uniform hypergraphs with bounded skeletal degeneracy have linear Ramsey number. This is the hypergraph analogue of the Burr--Erd\H{o}s conjecture (proved by Lee). In addition, we give upper and lower bounds of the same shape for the Tur\'an number of a $k$-uniform $k$-partite hypergraph in terms of its skeletal degeneracy. The proofs of both results use the technique of dependent random choice. In addition, the proof of our Ramsey result uses the `random greedy process' introduced by Lee in his resolution of the Burr--Erd\H{o}s conjecture.
\end{abstract}

\maketitle

\section{Introduction}

Two central problems in extremal combinatorics are to better understand the Tur\'an and Ramsey numbers of graphs and hypergraphs. To recap the definitions, let $H$ be a $k$-uniform hypergraph and $n,q$ be positive integers. The \textit{Tur\'an number} $\ex(n,H)$ is the maximum number of edges in a $k$-uniform hypergraph on $n$ vertices that is $H$-free. The \textit{Ramsey number} $r(H;q)$ is the minimum positive integer $N$ for which every coloring of the edges of the complete $k$-uniform hypergraph $K_N^{(k)}$ using $q$ colors contains a monochromatic copy of $H$.

Estimating Ramsey and Tur\'an numbers has remained a challenging problem (see, for example, the surveys \cite{Kev11,CFS15,MS20} and the book \cite{GRS90}). It is therefore natural to seek parameters of the hypergraph $H$ that control these numbers. As we will explain, for $k=2$ (i.e., the graph case) the notion of degeneracy has been central in this regard. The purpose of this paper is to propose an analogue for hypergraphs, which we term \textit{skeletal degeneracy}, and to prove that it emulates the role of degeneracy for graphs, at least as it concerns Tur\'an and Ramsey numbers.

Among the earliest results in modern combinatorics are Tur\'an's theorem and Ramsey's theorem \cite{Tur41,Ram29}. Tur\'an's theorem determines $\ex(n,K_r)$ exactly, showing that among $K_r$-free graphs on $n$ vertices the complete balanced $(r-1)$-partite graph has the most edges (the case $r=3$ is due to Mantel \cite{Man07} and goes back more than a century). Ramsey's theorem guarantees that Ramsey numbers exist. Besides their inherent interest, Tur\'an and Ramsey problems have served as strong motivations to develop new methods and techniques. For instance, the early quantitative upper bound $r(K_r;2) \leq \binom{2r-2}{r-1} < 4^r$ was proved by Erd\H{o}s and Szekeres \cite{ESz35}. Despite much effort, the exponent $4$ in the upper bound resisted improvement for many decades until the very recent breakthrough of Campos, Griffiths, Morris, and Sahasrabudhe \cite{CGMS23}. 

Additionally, an early and extremely influential use of the probabilistic method in combinatorics was Erd\H{o}s's lower bound on $r(K_r;2)$ leveraging the random graph $\mb G(n;1/2)$ \cite{Erd47}. To this day, newcomers to the field often see this example before any other.  In 1975, Spencer \cite{Spe75} slightly improved the lower bound, in a paper that introduced a common form of the Lov\'asz local lemma, but this remains the state of the art.

With respect to Tur\'an numbers, a consequence of the Erd\H{o}s--Stone--Simonovits theorem \cite{ESi66,ESt46} is that for a fixed graph $H$ with chromatic number $\chi = \chi(H)$ the extremal number satisfies $\ex(n,H) = (1 - 1/(\chi-1) + o(1))n^2\!/2$. Thus, for non-bipartite $H$, the asymptotics of $\ex(n,H)$ are determined exactly by $\chi(H)$. 
% While this characterization of non-bipartite Tur\'an numbers is quite satisfactory, an unfortunate implication is that computing such numbers is (presumably) difficult---at least as hard as computing chromatic numbers.

In contrast to the non-bipartite case, for bipartite $H$ the Erd\H{o}s--Stone--Simonovits theorem only says that $\ex(n,H)=o(n^2)$, and these numbers are far less understood. In fact, even determining the order of magnitude of $\ex(n,H)$ is open in general. In the search for graphical parameters of $H$ that control $\ex(n,H)$, a natural candidate is the \textit{degeneracy} of $H$, the smallest $d$ such that every subgraph of $H$ has minimum degree at most $d$. A longstanding conjecture of Erd\H{o}s~\cite{Erd67} states that if $H$ is a $d$-degenerate bipartite graph then the Tur\'an number $\ex(n,H)$ has order at most $n^{2-1/d}$. Despite much interest (see, for example, \cite{FS09,CJL21,CL21,Jan19,Jan20,Jan21,JS22,BJST23,ST20}), the conjecture remains open. Currently, the best lower bound is $\ex(n,H)\geq \Omega(n^{2-2/d})$, achieved by a straightforward probabilistic construction. The best known upper bound is due to Alon, Krivelevich, and Sudakov \cite{AKS03}. In an early application of dependent random choice (which also figures prominently in this paper) they proved the following theorem.

\begin{theorem}[{\cite[Theorem 3.5]{AKS03}}]
\label{thm:AKS-graph-turan}
For every $d$-degenerate bipartite graph $H$,
\[\ex(n,H)\leq O_H\left(n^{2-\frac1{4d}}\right).\]
\end{theorem}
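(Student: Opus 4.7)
My plan is to apply \emph{dependent random choice} (DRC), the technique explicitly flagged in the surrounding text. Set $h := |V(H)|$, and suppose $G$ has $n$ vertices and $m \ge Cn^{2-1/(4d)}$ edges, for a constant $C = C(H)$ chosen sufficiently large. The goal is to find a set $U^* \subseteq V(G)$ of size at least $h$ such that every $d$-subset of $U^*$ has at least $h$ common neighbors within $U^*$; once such $U^*$ is in hand, a greedy embedding using the $d$-degeneracy of $H$ will produce a copy of $H$ in $G$.

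\textbf{Step 1 (DRC).} I would sample a multiset $T$ of $t = 2d$ vertices from $V(G)$ uniformly at random with replacement, and set $U := \bigcap_{v \in T} N_G(v)$. Jensen's inequality applied to $x \mapsto x^{2d}$ yields
\[
\mathbb{E}|U| \;=\; \sum_{v} \left(\frac{\deg(v)}{n}\right)^{2d} \;\ge\; n\left(\frac{2m/n}{n}\right)^{2d} \;\ge\; C^{2d}\,n^{1/2},
\]
which dwarfs $h$ for large $n$. Since $\Pr[S \subseteq U] = (|N(S)|/n)^{2d}$ for a fixed $d$-set $S$, the expected number of $d$-subsets $S \subseteq U$ with $|N(S)| < h$ is at most $\binom{n}{d}(h/n)^{2d} = O(n^{-d}) = o(1)$. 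Deleting one vertex per ``bad'' subset produces a set $U^{**}$ in which every $d$-subset has at least $h$ common neighbors in $V(G)$. A parallel calculation, using the key identity $|N(S) \cap U| = |N(S \cup T)|$ for $S \subseteq U$ (so the event ``few common neighbors of $S$ inside $U$'' becomes ``few common neighbors of $S \cup T$,'' a set of size at most $3d$), lets one strengthen this to ``common neighbors within $U^*$'' after one further round of vertex deletions.

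\textbf{Step 2 (Embedding).} Since $H$ is $d$-degenerate, there is an ordering $v_1, \ldots, v_h$ of $V(H)$ in which each $v_i$ has at most $d$ back-neighbors in $\{v_1, \ldots, v_{i-1}\}$. Embed $v_1, \ldots, v_h$ into $U^*$ greedily: at step $i$, the back-neighbors of $v_i$ have been mapped to a set $S_i \subseteq U^*$ of size at most $d$, and by the property of $U^*$ we have $|N(S_i) \cap U^*| \ge h > i - 1$, so some unused vertex of $U^* \cap N(S_i)$ can serve as $\phi(v_i)$. Iterating yields an embedding $\phi \colon H \hookrightarrow G$. (Bipartiteness of $H$ is not used in the embedding itself; it appears in the theorem hypothesis only because $\mathrm{ex}(n,H) = \Theta(n^2)$ for nonbipartite $H$, which would render the bound trivial in that regime.)

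\textbf{Main obstacle.} The technical heart of the argument is the strengthening in Step~1 to common neighbors \emph{within} $U^*$. Without it, the greedy embedding cannot maintain the invariant that all images lie in $U^*$, and the DRC guarantee ceases to apply to future back-neighbor images. Carrying this strengthening through a second layer of convexity estimates on subsets of size up to $3d$ is precisely what forces the exponent $1/(4d)$ (rather than $1/d$ or $1/(d+1)$, which a single naive DRC would suggest); balancing the two DRC computations is the place where one must be careful with constants.
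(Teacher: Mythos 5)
The paper does not prove this theorem; it is cited from AKS03, and the closest in-paper analogue is the hypergraph machinery of \cref{sec:turan-upper} (which is stated for $k\ge 3$ and works with $k$-partite host hypergraphs). Your plan differs from both in a way that introduces a genuine gap.

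The problematic step is Step~1's goal of producing a \emph{single} set $U^*$ such that every $d$-subset of $U^*$ has at least $h$ common neighbors \emph{within} $U^*$. This property is simply false for some dense host graphs. Take $G=K_{n/2,n/2}$, which has $n^2/4\gg n^{2-1/(4d)}$ edges. For any multiset $T$ with $N(T)\neq\emptyset$, the set $T$ lies entirely in one part and $U=N(T)$ is the other part, hence an independent set. Every $d$-subset of $U$ then has \emph{zero} common neighbors inside $U$. Deleting vertices cannot repair this — every $d$-subset is bad, so $U^*$ would have to be empty. Thus your Step~1 cannot succeed, and Step~2's greedy embedding into $U^*$ never gets off the ground, even though $K_{n/2,n/2}$ trivially contains every small bipartite $H$.

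The ``parallel calculation'' you sketch also does not close the gap as a computation. You cannot freely range over $3d$-sets $R$ with $|N(R)|<h$ and charge each one $(h/n)^{2d}$: the relevant sets are $S\cup T$ where $T$ is the random tuple, and a fixed $d$-set $S$ with $|N(S)|\ge h$ is retained in $U$ with probability $(|N(S)|/n)^{2d}$, which can be large, while the event $|N(S\cup T)|<h$ conditioned on $T\subseteq N(S)$ need not be rare (in the $K_{n/2,n/2}$ example it has probability~$1$). Tracking the expected number of pairs $(S,T)$ that are jointly bad gives a contribution of order $n^d$ from the $|N(S)|\ge h$ case, which swamps $\mathbb{E}|U|$.

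The correct version of your plan — which is what AKS03 do, and what \cref{lem:intermediate-step-quantitative,thm:k-uniform-pruning} do in the $k\ge 3$ setting — is to first split $V(G)$ into two parts $V_1\sqcup V_2$ and run dependent random choice \emph{between parts}: one arranges that every small subset of $V_1$ has many common neighbors in $V_2$ and vice versa (the paper's ``$(a,d)$-vertex-extending to $V_t$'' property for each $t$). The embedding then places one side of the bipartite $H$ into $V_1$ and the other into $V_2$, following the degeneracy order; the two-sided extension property is exactly what keeps the greedy process alive at every step. Switching to this bipartite DRC (and, if you want the literal $1/(4d)$ exponent, choosing the DRC parameters as in AKS03 with sampling exponent $4d$ and guarantee on $2d$-subsets) repairs the argument.
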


In addition to controlling the Tur\'an exponent, degeneracy plays an important role in graph Ramsey theory. In 1973, Burr and Erd\H{o}s \cite{BE75} conjectured that graphs with bounded degeneracy have Ramsey number linear in the number of vertices. They also made the related, weaker, conjecture that bounded \textit{degree} graphs have linear Ramsey number. These conjectures garnered much interest (see, for instance, \cite{GRR00,GRR01,CFS12} for the bounded degree case and \cite{KR04,KS03,FS09} for the bounded degeneracy case). Although the bounded degree case was resolved by Chvat\'al, R\"odl, Szemer\'edi, and Trotter \cite{CRST83} in 1983, it would take until 2017 until the bounded degeneracy conjecture was proved by Lee \cite{Lee17}.

\begin{theorem}[{\cite[Theorem 1.1]{Lee17}}]
\label{thm:Lee-graph-ramsey}
For every $d$-degenerate $n$-vertex graph $H$,
\[r(H;2)\leq O_d(n).\]
\end{theorem}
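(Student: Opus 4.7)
The plan is to combine dependent random choice with a random greedy embedding process. Let $N = Cn$ for a large constant $C = C(d)$ to be determined, and consider any $2$-coloring of $E(K_N)$. By pigeonhole, one color class (say red) has density at least $1/2$; call the red subgraph $G$. The goal is to find a copy of $H$ in $G$.

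First, I would apply dependent random choice to find a large subset $U \subseteq V(G)$ of size $\Omega(n)$ such that the overwhelming majority of $d$-element subsets $S \subseteq U$ have common neighborhood $|N_G(S) \cap U| \geq \beta |U|$, where $\beta = \beta(d) > 0$. The setup is standard: sample $t = t(d)$ vertices $x_1,\dots,x_t$ uniformly and set $U$ to be their common neighborhood; the density lower bound gives $\mb E|U| \geq N \cdot 2^{-t}$, while an averaging argument bounds the expected number of \emph{bad} $d$-subsets (those with common neighborhood smaller than $\beta|U|$). After a deletion step, one obtains $U$ in which the number of bad $d$-subsets is at most $\eps \binom{|U|}{d}$ for some small $\eps = \eps(d) > 0$ that we can tune.

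Next, I would order $V(H)= \{v_1,\dots,v_n\}$ so that each $v_i$ has at most $d$ earlier neighbors, its \emph{parents}. Then embed $H$ into $U$ sequentially: at step $i$, define the candidate set
\[
A_i = \{ u \in U \setminus \phi(\{v_1,\dots,v_{i-1}\}) : u\phi(v_j) \in E(G) \text{ for every parent } v_j \text{ of } v_i \},
\]
and choose $\phi(v_i)$ uniformly at random from $A_i$ (aborting if $A_i = \emptyset$). Since the number of previously embedded vertices is at most $n \ll |U|$, collisions are cheap; the real issue is keeping $A_i$ non-empty, which boils down to keeping the common red neighborhood of each parent set large throughout the process.

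The main obstacle, and the core of Lee's argument, is analyzing this random process to show that $|A_i| = \Omega(|U|)$ persists throughout. A naive hope would be that the parent set of $v_i$ is uniformly distributed over $d$-subsets of $U$, so it is bad only with probability $\eps$, and a union bound with $\eps \ll 1/n$ finishes. Of course, the embedded vertices are neither independent nor uniformly distributed, since they are constrained by prior adjacency conditions. To overcome this, I would couple the random greedy embedding with an idealized uniform sampling of $d$-subsets and bound the Radon--Nikodym derivative using the inductive hypothesis $|A_j| \geq \beta|U|/2$ for $j < i$; each random choice contributes a factor of at most $O(1/\beta)$ to this derivative, and as $H$ is $d$-degenerate only the $d$ parents of $v_i$ contribute to the distortion when evaluating the parent set of $v_i$. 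Choosing $\eps$ small enough compared to the resulting distortion (which depends only on $d$), one concludes by a union bound over $i \leq n$ that with positive probability all candidate sets remain of size $\Omega(|U|)$, producing the desired monochromatic copy of $H$.
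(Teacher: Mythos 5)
Your high-level outline---dependent random choice to find a host set $U$ in which most $d$-subsets have large common neighbourhood, followed by a random greedy embedding in degeneracy order---is the right scaffolding, and it is indeed the skeleton of Lee's proof (and of the hypergraph version proved in this paper). However, the crucial quantitative step is where your argument breaks down, and it breaks in exactly the place that kept the Burr--Erd\H{o}s conjecture open before Lee.

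You want $N=Cn$ for a constant $C=C(d)$, you need $|U|\geq n$ (to have room to embed $H$), and you need the fraction $\eps$ of ``bad'' $d$-subsets of $U$ to satisfy $\eps\ll\beta^d/n$ so that the union bound over $n$ embedding steps closes. These three requirements are incompatible. Dependent random choice with $t$ sampled vertices gives $|U|\approx\alpha 2^{-t}N$ for a constant $\alpha$, so $|U|\geq n$ and $N=O(n)$ force $t=O_d(1)$. With $t$ bounded, the expected fraction of bad $d$-subsets is bounded below by a constant depending on $t,d,\beta$ but \emph{not} tending to $0$ as $n\to\infty$. So $\eps$ is a constant, $n\beta^{-d}\eps\gg 1$, and the union bound fails. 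Making $\eps=O(1/n)$ requires $t\to\infty$, which makes $N$ superlinear; this is why the pre-Lee versions of this argument only give $r(H;2)\leq n^{1+o(1)}$.

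Lee's actual contribution, which your sketch does not reproduce, is a machinery for getting by with a \emph{constant} $\eps$: replace the binary good/bad classification of $d$-subsets by a real-valued defect $\omega_\theta$ (equal to $\theta/|N(\cdot)|$ when the common neighbourhood is small and $0$ when it is at least $\theta$), control high moments of this defect via the dependent random choice step, and prove that the embedding succeeds as long as the \emph{sum} of defects over each part of a suitable partition of $V(H)$ is bounded (\cref{lem:omega-embed}); the sum condition is converted to a ``candidate set large enough'' condition by embedding the highest-defect vertices first. The Radon--Nikodym comparison you gesture at appears in Lee's proof as the neutralizing lemma (\cref{lem:neutralizing}), but it must be iterated recursively over the tree of ancestor dependencies (\cref{lem:random-defect-bound}), with the degeneracy bound crucially keeping the branching factor at most $d$ so the accumulated distortion stays $\gamma^{O(d)}$ rather than blowing up with the depth. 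You also need the logarithmic partitioning of $V(H)$ and the corresponding partition of the host (\cref{lem:H-partition,lem:G-partition}) to set up the defect thresholds $\theta_i$ correctly; a single threshold does not suffice. Finally, a technical point you elide: the greedy process cannot simply ``abort'' when a candidate set is empty, because then conditioning on non-abortion biases the distribution in an uncontrolled way; Lee's algorithm always produces an output (with three cases for small or empty candidate sets), and one shows \emph{a posteriori} that the good case always occurs with positive probability.

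In short: correct strategy, correct identification of where the difficulty lies, but the analysis you propose is essentially the naive union bound that everyone knew fails to give a linear bound; the real content of Lee's theorem is the defect/ordering/neutralizing machinery that you have not supplied.
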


In this paper, we seek hypergraph analogues of \cref{thm:AKS-graph-turan,thm:Lee-graph-ramsey}. A na\"ive attempt would be to use the standard definition of hypergraph degeneracy: the \emph{degeneracy} of a hypergraph $H$ is the least $d$ such that every subhypergraph of $H$ has minimum degree at most $d$. This notion was studied by Kostochka and R\"odl \cite{KR06}, who showed that the natural analogue of the Burr--Erd\H{o}s conjecture fails for hypergraphs. Specifically, they described a family of 1-degenerate 4-uniform hypergraphs $H$ on $n$ vertices satisfying $r(H;2)\geq 2^{\Omega(n^{1/3})}$, as well as more complicated examples showing that the conjecture fails for 3-uniform hypergraphs. (See also \cite{CFR17} for more examples of this phenomenon.)

It is also straightforward to show that the hypergraph analogue of Erd\H{o}s's conjecture regarding Tur\'an numbers is false. Indeed, for every $\epsilon>0$ there is a 1-degenerate $k$-uniform $k$-partite hypergraph $H$ with $\ex(n,H)>n^{k-\epsilon}$. We give the details of this simple construction in \cref{sec:turan-lower} (see \cref{cor:degen-turan-counterex}).

To overcome the issues above we introduce a new notion of degeneracy for hypergraphs that we call \textit{skeletal degeneracy}.

\begin{definition}\label{defn:skeletal degeneracy}
Let $H$ be a $k$-uniform hypergraph. For $0\leq i<k$, its \emph{$i$-skeleton} $H^{(i)}$ is the $(i+1)$-uniform hypergraph on the same vertex set with all $(i+1)$-edges that are contained in some $k$-edge of $H$. The \emph{$i$-th skeletal degeneracy} of $H$, denoted $d_i(H)$, is the degeneracy of $H^{(i)}$, i.e., the least $d$ such that every subhypergraph of $H^{(i)}$ has minimum degree at most $d$. Equivalently, this is the smallest $d$ for which there exists an ordering $v_1,\ldots,v_n$ of $V(H)$ in which each vertex $v_j$ is the highest-numbered vertex of at most $d$ of the $(i+1)$-edges of $H^{(i)}$. We refer to $d_1(H)$ simply as the \emph{skeletal degeneracy} of $H$. 
\end{definition}

For a $k$-uniform hypergraph $H$ the $(k-1)$-st skeletal degeneracy $d_{k-1}(H)$ agrees with the standard definition of degeneracy given above. Moreover, although $d_{k-1}(H)$ is not a good predictor of the Tur\'an and Ramsey properties of $H$, we will show that the skeletal degeneracy $d_1(H)$ is.

Observe that the skeletal degeneracy can equivalently be defined as the degeneracy of the graph formed by replacing every $k$-edge of a hypergraph $H$ by a $k$-clique. Although it may seem that this definition does not capture the hypergraph structure of $G$, we show that it is nevertheless sufficient to control the Tur\'an and Ramsey properties of $H$. Our first of two main results shows that this is the case for Tur\'an numbers.

\begin{theorem}
\label{thm:k-unif-turan-main}
For every $k \geq 2$ there exist constants $C_k,c_k>0$ such that the following holds. For any $k$-uniform $k$-partite hypergraph $H$ with at least two edges, the Tur\'an number of $H$ satisfies
\[\Omega_k\left(n^{k-\frac{C_k}{d_1(H)}}\right) \leq \ex(n,H) \leq O_H\left(n^{k-\frac{c_k}{d_1(H)^{k-1}}}\right).\]
\end{theorem}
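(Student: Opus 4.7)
My plan is the standard alteration method on a random $k$-uniform hypergraph: include each potential hyperedge of $K_n^{(k)}$ independently with probability $p$ and delete one edge per copy of $H$. The expected number of edges is $p\binom{n}{k}$ and the expected number of copies of any fixed subhypergraph $H' \subseteq H$ is at most $n^{v(H')} p^{e(H')}$. Choosing $p = \Theta(n^{-(v(H')-k)/(e(H')-1)})$ to balance these quantities and deleting one edge per copy of $H'$ yields an $H'$-free (and hence $H$-free) hypergraph with $\Omega(n^{k - (v(H')-k)/(e(H')-1)})$ edges. The task reduces to exhibiting $H' \subseteq H$ with $(v(H')-k)/(e(H')-1) \leq C_k/d_1(H)$. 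Writing $d = d_1(H)$, let $G^* \subseteq H^{(1)}$ be the densest subgraph, which by definition of degeneracy has minimum degree at least $d$ and so at least $d|V(G^*)|/2$ edges. I would construct $H'$ by selecting, for each edge of $G^*$, one witnessing hyperedge of $H$; tracking the vertex-blowup coming from the extra vertices of those hyperedges yields $v(H') = O_k(|V(G^*)|)$ and $e(H') = \Omega_k(d|V(G^*)|)$, which gives $(v(H')-k)/(e(H')-1) = O_k(1/d)$ as needed.

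\textbf{Upper bound.} My strategy is hypergraph dependent random choice combined with a greedy embedding in degeneracy order, generalizing the Alon--Krivelevich--Sudakov approach recalled in \cref{thm:AKS-graph-turan}. First, pass from the given dense $k$-uniform hypergraph to a balanced $k$-partite sub-hypergraph $G$ with parts $A_1,\ldots,A_k$ corresponding to the parts of $H$; this loses only a constant factor in edge density. The core tool is a hypergraph dependent random choice lemma producing $U_i \subseteq A_i$ with each $|U_i|$ comparable to $n$ and with the property that, for every choice of $d := d_1(H)$ vertices distributed across the $U_i$'s and every collection of $(k-1)$-subsets of those vertices (each meeting $k-1$ parts, as dictated by the $k$-partite structure), the common extension set in the remaining part has size at least $|V(H)|$. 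I would prove this lemma by sampling a suitable number of random $(k-1)$-tuples from appropriate parts and defining $U_i$ to consist of vertices forming hyperedges of $G$ with many of these tuples, then bounding the failure probability via Jensen/moment arguments.

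With the $U_i$'s in hand, embed $H$ one vertex at a time along a degeneracy ordering $u_1,\ldots,u_{v(H)}$ in which each $u_j$ has at most $d$ back-neighbors in $H^{(1)}$. When placing $u_j$ in its designated $U_i$, the back-neighbors of $u_j$ (at most $d$ of them, already embedded) and the back-hyperedges of $u_j$ (at most $\binom{d}{k-1}$, each a $(k-1)$-subset of the back-neighbors) fully determine the constraints on the image of $u_j$. The common extension property provides at least $|V(H)|$ valid candidates, so one that is unused exists. The main obstacle is calibrating the dependent random choice so that the exponent $d^{k-1}$ comes out correctly: since each vertex may be the apex of up to $\binom{d}{k-1} \approx d^{k-1}$ back-hyperedges, the lemma must simultaneously control that many $(k-1)$-link constraints, and in the moment computation this effectively raises the normalized edge density to the $d^{k-1}$-th power, producing the $c_k/d^{k-1}$ loss in the exponent stated in the theorem.
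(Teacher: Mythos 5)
Your \textbf{lower bound} argument has a fundamental gap. The claimed estimate $v(H') = O_k(|V(G^*)|)$ is false in general: nothing forces the auxiliary vertices of the chosen witnessing hyperedges to coincide, so they can contribute roughly $(k-2)\,e(G^*)$ fresh vertices, and $e(G^*)$ is itself $\Omega(d\,|V(G^*)|)$. The bipartite hedgehog $H = H_d^{(k)}$ of \cref{defn:bipartite-hedgehog} is a concrete counterexample: here $G^* = K_{d,d}$ has $2d$ vertices, but every witnessing hyperedge supplies $k-2$ fresh auxiliary vertices, so the only choice is $H' = H$ with $v(H') = 2d + (k-2)d^2$ and $e(H') = d^2$, giving $(v(H')-k)/(e(H')-1) \to k-2$. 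The resulting alteration bound is only $\Omega(n^{2+o(1)})$, far below $\Omega_k(n^{k-C_k/d})$. The paper sidesteps this vertex blowup by randomizing at the \emph{skeleton} level rather than the hyperedge level: it samples a random graph, deletes edges to destroy all copies of a dense subgraph $F \subseteq H^{(1)}$, and then places a $k$-edge on every surviving $K_k$ (see \cref{thm:k-uniform-construction,thm:k-uniform-construction-ith-degen}); this only charges for the two-vertex skeleton of each constraint rather than for the $k$-vertex hyperedges, which is exactly what makes the exponent $k - O_k(1/d)$ reachable. Your approach as written cannot be patched to produce the correct exponent.

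Your \textbf{upper bound} follows the right strategy (dependent random choice plus greedy embedding in degeneracy order, matching \cref{sec:turan-upper}), but two details need repair. First, the DRC lemma you describe is false as stated: you cannot demand a large common extension over \emph{arbitrary} $(k-1)$-subsets of a $d$-set, since a $(k-1)$-subset that is not a $(k-1)$-edge of $G$ has no extension at all. The paper's vertex-extending property quantifies only over $(k-1)$-subsets already lying in $E(G^{(k-2)})$. Second, after that restriction you face the problem that when placing $u_j$, the image of a back-hyperedge $e$ (with $e \cup \{u_j\} \in E(H)$) may not yet be a $(k-1)$-edge of $G$; the paper fixes this by augmenting $H$ to $\hat H$ with one auxiliary vertex per part, embedding those as a seed $k$-edge, so that $\phi(e) \cup \{\phi(\hat w_i)\} \in E(G)$ certifies $\phi(e) \in E(G^{(k-2)})$ at each step (see \cref{thm:k-unif-turan-upper}). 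Finally, your attribution of the $d^{k-1}$ loss to the $\binom{d}{k-1}$ possible back-hyperedges is not accurate: the term $2^{\binom{d}{k-1}}$ enters the union bound only as a constant factor and does not affect the Tur\'an exponent; the $d^{k-1}$ actually arises from iterating the DRC pruning once per part, with the target extension parameter growing geometrically over the $k-1$ iterations (see the proof of \cref{thm:k-uniform-pruning}).
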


Note that for $k=2$, the exponent in the upper and lower bounds are both of the shape ${2-c/d_1(H)}$. However, as $k$ increases, the gap between the upper and lower bounds in this theorem increases.  In \cref{sec:turan-lower} we give examples showing that (up to the values of the constants $C_k,c_k$) both growth rates are possible. We discuss this more and conjecture more precise bounds in \cref{sec:open}.

We remark that \cref{thm:k-unif-turan-main} focuses on the $k$-partite case because the asymptotic order of magnitude in the non-partite case is easy to ascertain. Indeed, if $H$ is not $k$-partite, then $\ex(n,H) = \Theta_H(n^k)$; the upper bound is trivial and the lower bound is witnessed by the complete balanced $k$-partite hypergraph. This is not to say that the non-partite case is fully understood. On the contrary, even determining the leading asymptotics of $\ex(n,K_4^{(3)})$ (i.e., tetrahedra-free $3$-uniform hypergraphs) is a notable open problem. Nevertheless, we restrict our attention to the $k$-partite case, where even the order of magnitude is unknown.

Our second main result concerns Ramsey numbers. Conlon, Sudakov, and the first author asked if the hypergraph analogue of the Burr--Erd\H{o}s conjecture holds for skeletal degeneracy \cite[Problem 1.26]{AIM15}. We answer this question in the affirmative.

\begin{theorem}
\label{thm:lin-ramsey-main}
Let $H$ be a $k$-uniform hypergraph on $n$ vertices with skeletal degeneracy $\skel{H}=d$. Then the $q$-color Ramsey number of $H$ satisfies \[r(H;q)\leq O_{k,d,q}(n).\]
\end{theorem}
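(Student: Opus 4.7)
The strategy adapts Lee's random greedy method to the hypergraph setting. Fix a skeletal degeneracy ordering $v_1,\ldots,v_n$ of $V(H)$: each vertex $v_j$ has at most $d$ back-neighbors $B_j=\{v_i:i<j,\ \{v_i,v_j\}\in E(H^{(1)})\}$, and hence at most $\binom{d}{k-1}$ edges $e\in E(H)$ with $\max e=v_j$, each of the form $\{v_j\}\cup T$ for some $T\in\binom{B_j}{k-1}$. Take $N=C_{k,d,q}\, n$, consider any $q$-coloring of $E(K_N^{(k)})$, and pick by pigeonhole a color class $G$ with at least $\binom{N}{k}/q$ edges. It suffices to find a copy of $H$ in $G$.

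The main tool is a hypergraph version of dependent random choice. By iteratively sampling $(k-1)$-subsets of $[N]$ and intersecting co-neighborhoods in $G$, extract a subset $U\subseteq[N]$ of size $\Omega(n)$ such that for all but a tiny fraction of $d$-subsets $S\subseteq U$ and every collection $\mathcal{T}$ of $(k-1)$-subsets of $S$, the joint co-neighborhood $\bigl(\bigcap_{T\in\mathcal{T}}N_G(T)\bigr)\cap U$ has size $\Omega(n)$. This is exactly the property required at each embedding step: when a vertex $v_j$ is about to be embedded and its back-neighborhood $B_j$ has been mapped to a good $d$-subset $S\subseteq U$, there are many candidates $x\in U$ such that $\{x\}\cup T\in E(G)$ for every $T\in\binom{S}{k-1}$ that corresponds to an edge of $H$ with maximum $v_j$.

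Having secured $U$, carry out a random greedy embedding. Embed $v_1,\ldots,v_n$ in order, and at step $j$ draw $\phi(v_j)$ uniformly at random from the set of valid candidates $\bigl(\bigcap_T N_G(T)\bigr)\cap U\setminus \phi(\{v_1,\ldots,v_{j-1}\})$, where $T$ ranges over those $(k-1)$-subsets of $\phi(B_j)$ that correspond to edges of $H$ with maximum $v_j$. Following Lee, the key is to maintain the invariant that the joint distribution of any bounded-size collection of already-embedded images stays ``spread out'' over $U$, i.e.\ close to uniform on tuples from $U$. This invariant guarantees that the image of $B_j$ is a good $d$-subset with high probability, so the candidate set for $\phi(v_j)$ is nonempty throughout, and the embedding succeeds with positive probability.

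The main obstacle is the hypergraph dependent random choice. In the graph case one round of sampling suffices, because Lee's DRC directly controls common neighborhoods of $d$-tuples. For $k\geq 3$ we need to simultaneously control the $(k-1)$-link structure on every $d$-subset of $U$, which is a higher-order property of $G$ not visible from the first moment alone; extracting it should require iterated sampling at the $(k-1)$-skeleton level, upgrading the $1$-skeletal information (which is all the skeletal degeneracy hypothesis provides) into genuine $k$-edge information at each embedding step. A secondary difficulty is extending Lee's potential-function/martingale analysis of the random greedy process: each newly embedded vertex can now influence up to $\binom{d}{k-1}$ hyperedge constraints per subsequent vertex, so the spread-out invariant must absorb this extra combinatorial factor without the candidate sets collapsing.
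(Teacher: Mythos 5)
Your reduction ``pick the majority color class $G$ and find $H$ in $G$'' is already false when $H$ is not $k$-partite, and \cref{thm:lin-ramsey-main} is claimed for \emph{all} $k$-uniform $H$. Concretely, the balanced complete $k$-partite hypergraph on $[N]$ has roughly $k!/k^k \cdot\binom Nk$ edges and contains no $K_{k+1}^{(k)}$ (hence no non-$k$-partite $H$ whatsoever); if $q$ is somewhat larger than $k^k/k!$ one can $q$-color $K_N^{(k)}$ so that this $k$-partite hypergraph is a color class with at least $\binom Nk/q$ edges, and this holds for every $N$. So no amount of dependent random choice inside a single dense color class can produce a copy of, say, $K_{k+1}^{(k)}$, let alone an $n$-vertex non-$k$-partite $H$. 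The paper's proof does not take a majority color class. It first applies the Kostochka--R\"odl reduction (\cref{lem:kr-reduction}): since $H^{(1)}$ is $d$-degenerate and hence $(d+1)$-colorable, set $\ell=d+1$, find (via $r(K_\ell^{(k)};q)$) a positive-density family of monochromatic $K_\ell^{(k)}$'s of a single color, view these as $\ell$-edges of an auxiliary $\ell$-uniform $\ell$-partite hypergraph $\hat G$, and simultaneously convert $H$ into an $\ell$-uniform $\ell$-partite $\hat H$ by padding each edge with $\ell-k$ auxiliary vertices (this preserves skeletal degeneracy and, because $e(H)\le d^{k-1}n$ by \cref{lem:degen-edge-upper}, keeps $v(\hat H)=O_{k,d}(n)$). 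Only after this step is the problem a density/Tur\'an-type embedding, to which the dependent-random-choice-plus-random-greedy machinery is applied. Even in the $k$-partite case the pigeonhole-on-colors step is not where the constant $O_{k,d,q}$ comes from; the constant lives in $r(K_\ell^{(k)};q)$ and in the constants of \cref{thm:lin-turan}.

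Your remaining sketch (hypergraph DRC controlling all $(k-1)$-link co-neighborhoods on $d$-subsets of a linear-sized $U$, plus a Lee-style random greedy embedding with a spread-out invariant) is the right shape for the second half, and you correctly flag that both the DRC and the martingale analysis need nontrivial hypergraph upgrades. But these are presented as open obstacles rather than resolved, and they are in fact where the paper's new ideas live: the defect function $\omega_\theta(Q,V_i;G)$ keyed to \emph{all partial edges} of $Q$, not just a chosen family $\mathcal T$ of $(k-1)$-subsets; the moment bound $\E[\sum_Q\omega_\theta^t]$ from \cref{lem:simul-pruning}; and the neutralizing/tree analysis (\cref{lem:neutralizing,lem:random-defect-bound}) that absorbs the $\binom d{k-1}$-fold blowup in constraints per embedded vertex. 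Without these, the invariant you want to maintain is not actually established. So the proposal identifies the right toolbox but has both a structural gap (missing the $\ell$-uniform $\ell$-partite reduction, without which the scheme fails for non-$k$-partite $H$) and the two technical gaps you yourself name.
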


One nice application of this theorem is to triangulations of surfaces. Let $\Sigma$ be a surface of genus $g$ and let $H$ be a 3-uniform $n$-vertex hypergraph which triangulates $\Sigma$. It can be deduced from Euler's formula (see, e.g.,~\cite[Theorem 4.4.4]{MT01}) that $d_1(H)\leq O(\sqrt{g})$. Thus by \cref{thm:lin-ramsey-main} we see that $r(H;q)\leq O_{g,q}(n)$ where the hidden constant only depends on the genus and the number of colors. To the best of our knowledge, this family of hypergraphs was not previously known to have linear Ramsey number. One can also show that the dependence on genus in the constant in front of the linear term is necessary due to an example based on embedding a large clique into a surface of large genus.

The remainder of this paper is organized as follows. We prove the upper and lower bounds on \cref{thm:k-unif-turan-main} in \cref{sec:turan-upper,sec:turan-lower}, respectively. Using the same techniques, we give a short proof of an almost-linear bound on the Ramsey number in \cref{sec:ramsey-almost-lin}. In \cref{sec:ramsey-lin}, we prove \cref{thm:lin-ramsey-main}. Finally, in \cref{sec:open} we give some open problems.

\vspace{1em}
\noindent\textbf{Acknowledgements:} We thank David Conlon for helpful discussions early in this project's development.

\section{Upper bounds on the Tur\'an number}
\label{sec:turan-upper}

In this section we prove the upper bound in \cref{thm:k-unif-turan-main}. We show how to find a $k$-uniform $k$-partite hypergraph $H$ in any $k$-uniform $k$-partite host hypergraph $G$ with sufficiently many edges. We do this by first pruning $G$ so that all small sets of $(k-1)$-edges in $G$ have many mutual extensions to $k$-edges in $G$. We then show how this extending property suffices to embed $H$ in $G$.

\begin{notation*}
For a $k$-partite $k$-uniform hypergraph $G$ with parts $V_1, \dots, V_k$, write $V_{-t} =  \bigcup_{i\ne t}V_i$. For $S\subseteq V(G)$, we use the notation $G^{(k-2)}[S]$ for the subhypergraph of the $(k-2)$-skeleton of $G$ induced on the vertices $S$. For example, $G^{(k-2)}[V_{-t}]$ is the $(k-1)$-partite $(k-1)$-uniform hypergraph on $V_{-t}$ with edge set $\{e\cap V_{-t}:e\in E(G)\}$.
\end{notation*}

\begin{definition}
Let $G$ be a $k$-partite $k$-uniform hypergraph on vertex set $V_1\sqcup V_2\sqcup\cdots\sqcup V_k$. For a set $F\subseteq E(G^{(k-2)})$, we say that a vertex $v\in V(G)$ is a \emph{mutual extension of $F$} if $e\cup\{v\}\in E(G)$ for each $e\in F$. For each $1\leq t\leq k$ and positive integers $a, d$, we say that $G$ is \textit{$(a, d)$-vertex-extending to $V_t$} if for every set $S$ of at most $d$ vertices in $V_{-t}$, the set $E(G^{(k-2)}[S])$ of $(k-1)$-edges induced by $S$ has at least $a$ mutual extensions (in $V_t$).
\end{definition}

\begin{lemma}
\label{thm:k-uniform-pruning}
Let $d \in \mb N$ and $k \geq 3$. There exist a constant $c_k >0$ and a constant $n_0 = n_0(k,d)$ such that the following holds. If $G$ is a $k$-partite $k$-uniform hypergraph on vertex set $V_1\sqcup\cdots\sqcup V_k$ with $n\geq n_0$ vertices in each part and at least $n^{k- c_k / d^{k-1}}$ edges, then it contains a nonempty subhypergraph $G'\subseteq G$ which is $(n^{1/3}, d)$-vertex-extending to each part.
\end{lemma}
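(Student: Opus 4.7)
The plan is to apply dependent random choice (DRC) in one shot across all $k$ parts simultaneously. Let $m = \Theta(d)$ be a constant to be chosen. Sample random sets $T_t \subseteq V_t$ of size $m$ independently and uniformly for each $t \in [k]$, and define $G' \subseteq G$ to consist of those $k$-edges $f \in E(G)$ for which, for every $t$, the $(k-1)$-projection $f \cap V_{-t}$ is \emph{$T_t$-good} (meaning $T_t \subseteq N_t^G(f \cap V_{-t})$, where $N_t^G(e) := \{w \in V_t : e \cup \{w\} \in E(G)\}$). Intuitively, $G'$ consists of those $k$-edges whose $V_t$-slot can be swapped out for any element of $T_t$ while remaining an edge of $G$, for each $t$.

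For the expected edge count, a coordinatewise Jensen inequality computation (after pre-pruning any $(k-1)$-edge of codegree less than $2m$, which loses a negligible fraction of edges) gives
\[
E[|E(G')|] = \sum_{f\in E(G)} \prod_t \frac{\binom{|N_t(f\cap V_{-t})|}{m}}{\binom{n}{m}} \gtrsim \frac{|E(G)|^{km+1}}{n^{k^2m}}.
\]
Substituting the hypothesis $|E(G)|\geq n^{k-c_k/d^{k-1}}$ and $m=\Theta(d)$ reduces this to $n^{k-O(c_k k/d^{k-2})}$. For $k\geq 3$ and $c_k$ a sufficiently small constant depending only on $k$, the exponent is bounded below by a positive constant, so $G'$ is nonempty in expectation.

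For the vertex-extending property, fix $t$ and $S \subseteq V_{-t}$ of size $\leq d$. Every $(k-1)$-edge $e \in G'^{(k-2)}[S]$ is automatically $T_t$-good by construction, so $T_t \subseteq \bigcap_{e \in G'^{(k-2)}[S]} N_t^G(e)$. Call $(t,S)$ \emph{bad} if the mutual extensions in $G'$ of $G'^{(k-2)}[S]$ in $V_t$ have size less than $n^{1/3}$. The key DRC estimate is that for any collection $\mathcal{E}$ of $(k-1)$-edges supported on a $d$-set with $|\bigcap_{e\in\mathcal{E}} N_t^G(e)| < n^{1/3}$, the probability that all of $\mathcal{E}$ are $T_t$-good is at most $(n^{1/3}/n)^m = n^{-2m/3}$. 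Union-bounding over the $O(kn^d)$ candidate pairs $(t,S)$ and the $2^{O(d^{k-1})}$ possible witness subsets $\mathcal{E}$ yields $E[\#\text{bad pairs}] = o(1)$ for $m \geq 2d$. Combined with the edge-count estimate, with positive probability $G'$ is nonempty and $(n^{1/3},d)$-vertex-extending to every part.

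The main technical hurdle will be handling the mutual extensions \emph{in $G'$} rather than in $G$: a vertex $w \in V_t$ is a mutual extension in $G'$ only if, additionally, every $(k-1)$-edge $e^{(t',w)}$ obtained by swapping $w$ for $e$'s $V_{t'}$-vertex is also $T_{t'}$-good for each $t' \neq t$. These conditions introduce correlations between the $T_{t'}$'s and must be bounded via a secondary union-bound argument showing that, conditional on the mutual extensions in $G$ being $\geq n^{1/3}$, the additional $T_{t'}$-goodness filtering (for $t' \neq t$) retains at least $n^{1/3}$ surviving vertices with high probability. The slack afforded by taking $m$ a large enough multiple of $d$ is what makes both this second-stage bound and the main Jensen estimate work together with a single constant $c_k$.
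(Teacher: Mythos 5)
Your approach is genuinely different from the paper's: you run dependent random choice once, simultaneously on all $k$ parts, where the paper iterates a DRC pruning one part at a time (\cref{lem:intermediate-step-quantitative,lem:intermediate-steps}) and then handles the final part by a special argument. The edge-count estimate is fine — the bound $\sum_f\prod_t(d_t(f)/n)^m\gtrsim |E(G)|^{km+1}/n^{k^2m}$ does hold, e.g.\ by writing $\E_f[\prod_t(d_t(f)/n)^m]\geq \exp\bigl(m\sum_t\E_f[\log(d_t(f)/n)]\bigr)$ and bounding each $\E_f[\log(d_t(f)/n)]\geq\log p$ via convexity of $x\log x$. But the step you flag as ``the main technical hurdle'' is not a technical hurdle — it is a genuine gap, and I don't believe the proposed ``secondary union bound'' can be made to work.

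Here is the obstruction concretely. Your union bound establishes that with high probability, for every $(t,S)$ and every $\mathcal E\subseteq E(G'^{(k-2)}[S])$, the mutual extensions of $\mathcal E$ \emph{in $G$} number at least $n^{1/3}$. But $(n^{1/3},d)$-vertex-extension requires mutual extensions \emph{in $G'$}. In the one-shot construction, a vertex $v\in V_t$ that mutually extends $\mathcal E$ in $G$ lies in $G'$'s extension set only if for every $e\in\mathcal E$, every $t'\neq t$, and every $x\in T_{t'}$, we also have $(e\setminus\{e_{t'}\})\cup\{v,x\}\in E(G)$. Tracking this carefully, one finds that $v$ must mutually extend (in $G$) a larger collection of $(k-1)$-edges spanned by $S'=S\cup\bigcup_{t'\neq t}T_{t'}$, a set of roughly $d+(k-1)m$ vertices. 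A union bound over witness sets spanned by sets of that size costs roughly $n^{(k-1)m}$, but the only probabilistic discount available is the $(n^{1/3}/n)^m=n^{-2m/3}$ coming from $T_t$. For $k\geq3$ the exponent $(k-1)m-2m/3$ is positive for any $m$, so the union bound cannot close, no matter how $m$ is tuned relative to $d$. Your sketch of ``retaining at least $n^{1/3}$ surviving vertices with high probability'' also does not obviously help: the filtering probability for an individual candidate $v$ is $\prod_{e,t'}\bigl(|N_{t'}^G((e\setminus\{e_{t'}\})\cup\{v\})|/n\bigr)^m$, which has no reason to be close to $1$, and the filtering events for different $v$'s all depend on the same $T_{t'}$'s.

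The paper's iterative structure is exactly what repairs this. When one prunes only at $V_t$, a vertex $v\in V_t$ that mutually extends $E(G'^{(k-2)}[S])$ in $G$ is automatically a mutual extension in $G'$ — the extra $X_t$-goodness conditions on the edges $e\cup\{v\}$ don't involve $v$ at all — so the ``in $G$ vs.\ in $G'$'' gap simply does not arise for the current part. Vertex-extension to the earlier parts $V_r$ ($r<t$) is then inherited \emph{deterministically} by passing to $S\cup X_t$ and invoking the previous round's vertex-extension hypothesis at a larger allowed set size $(\lambda+1)d$; this cascading of set sizes across rounds is precisely the mechanism that a one-shot DRC cannot replicate. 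Finally, the paper's last step (sampling $X_k$ from $B_k$ rather than from $V_k$, using \cref{claim:edgesinproducts}) is what keeps the exponent at $d^{k-1}$ rather than $d^{k}$ — worth keeping in mind, since even a repaired one-shot argument based on the ``full swap'' pruning (which equals the iterated pruning) would lose a factor of $d$ in the exponent there.
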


This lemma builds on the following result, which uses the technique of dependent random choice.

\begin{lemma}
\label{lem:intermediate-step-quantitative}
Let $d\geq 2$, $k\geq 3, t, h, n, \lambda$ be positive integers with $1\leq t\leq k$, and let $p\in (0, 1/2)$ be a real number. Suppose these parameters also satisfy the following two inequalities:
\[p^{(\lambda+1)d}n\geq 1\qquad\text{and}\qquad n^{-1} \geq \left(\frac{h}{n}\right)^{\lambda d}\binom{kn}{d}2^{\binom{d}{k-1}}.\]
Then, given any $k$-partite $k$-uniform hypergraph $G$ on vertex set $V_1\sqcup\cdots\sqcup V_k$ with $n$ vertices in each part and at least $pn^k$ edges, the following holds. If $G$ is $(h, (\lambda+1)d)$-vertex-extending to $V_r$ for all $r < t$ then there exists a subhypergraph $G'$ of $G$ on the same vertex set with at least $p^{(\lambda+1)d}n^k$ edges that is $(h, d)$-vertex-extending to $V_r$ for all $r \leq t$.
\end{lemma}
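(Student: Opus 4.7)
The plan is to apply dependent random choice with probe drawn from $V_t$. Let $m=\lambda d$ and let $T=(u_1,\dots,u_m)$ consist of $m$ independent uniform samples from $V_t$. For a $(k-1)$-edge $f\in E(G^{(k-2)})$ with $f\subseteq V_{-t}$, write $N(f):=\{v\in V_t:f\cup\{v\}\in E(G)\}$ for its set of extensions and set $U:=\{f:T\subseteq N(f)\}$. Define $G':=\{e\in E(G):e\cap V_{-t}\in U\}$. Grouping the sum $\E|E(G')|=\sum_e(|N(e\cap V_{-t})|/n)^m$ by the $V_{-t}$-projection $f$ gives $\E|E(G')|=\sum_f|N(f)|^{m+1}/n^m$; the power-mean inequality applied to $\sum_f|N(f)|=|E(G)|\geq pn^k$ with at most $n^{k-1}$ terms then yields $\E|E(G')|\geq p^{m+1}n^k$.

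I claim $G'$ automatically inherits the $(h,d)$-vertex-extending property to $V_r$ for each $r<t$. Given $S\subseteq V_{-r}$ of size at most $d$, set $S^{\ast}:=S\cup T$ and write $\mathrm{ext}_G(S^{\ast})$ for the mutual extensions of $E(G^{(k-2)}[S^{\ast}])$ in $V_r$; since $|S^{\ast}|\leq(\lambda+1)d$, the hypothesis gives $|\mathrm{ext}_G(S^{\ast})|\geq h$. Fix $v\in\mathrm{ext}_G(S^{\ast})$ and any $g\in E(G'^{(k-2)}[S])$: because $g\subseteq V_{-r}$ is a $(k-1)$-edge in the $k$-partite hypergraph, it contains exactly one vertex $g_t\in V_t$, and writing $g_{-t}:=g\setminus\{g_t\}$, the membership $g\in E(G'^{(k-2)})$ produces some $x\in V_r$ with $g_{-t}\cup\{x\}\in U$, which forces $g_{-t}\cup\{u\}\in E(G^{(k-2)}[S^{\ast}])$ for every $u\in T$. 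Applying $v\in\mathrm{ext}_G(S^{\ast})$ to $g$ and to each $g_{-t}\cup\{u\}$ produces $g\cup\{v\}\in E(G)$ and $g_{-t}\cup\{v\}\in U$, which together give $g\cup\{v\}\in E(G')$. Hence $\mathrm{ext}_G(S^{\ast})\subseteq\mathrm{ext}_{G'}(S)$ and so $|\mathrm{ext}_{G'}(S)|\geq h$.

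For the extending property to $V_t$, call a $d$-set $S\subseteq V_{-t}$ \emph{problematic} if it has fewer than $h$ mutual extensions of $E(G'^{(k-2)}[S])$ in $V_t$. That set of extensions equals $\bigcap_{g\in U\cap E(G^{(k-2)}[S])}N(g)$ (taken to be $V_t$ when the intersecting collection is empty), so problematicity requires some non-empty $F\subseteq E(G^{(k-2)}[S])$ with $F\subseteq U$ and $|\bigcap_{g\in F}N(g)|<h$. Since $\Pr[F\subseteq U]=(|\bigcap_{g\in F}N(g)|/n)^{\lambda d}\leq(h/n)^{\lambda d}$, union-bounding first over the $2^{\binom{d}{k-1}}$ choices of $F$ and then over the at most $\binom{kn}{d}$ choices of $S$ bounds $\E Y\leq\binom{kn}{d}2^{\binom{d}{k-1}}(h/n)^{\lambda d}\leq 1/n$, where $Y$ denotes the number of problematic sets. (Subsets of size smaller than $d$ can be padded up without damaging the extending property.)

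To conclude, $\E[|E(G')|\cdot\mathbf{1}_{Y=0}]\geq\E|E(G')|-n^k\E Y\geq p^{\lambda d+1}n^k-n^{k-1}$. The first hypothesis $p^{(\lambda+1)d}n\geq 1$ combined with $p\leq 1/2$ yields $p^{\lambda d+1}n\geq p^{1-d}\geq 2^{d-1}$ and $1-p^{d-1}\geq 1/2$, so $p^{\lambda d+1}(1-p^{d-1})n\geq 2^{d-2}\geq 1$ for $d\geq 2$; rearranging, $p^{\lambda d+1}n^k-n^{k-1}\geq p^{(\lambda+1)d}n^k$. Hence some realization of $T$ simultaneously gives $Y=0$ and $|E(G')|\geq p^{(\lambda+1)d}n^k$, and the corresponding $G'$ satisfies all the required properties. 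The delicate step is the inheritance argument in the second paragraph: $G'$ has a strictly smaller $(k-1)$-skeleton than $G$, and one must trace carefully how restricting the $V_{-t}$-projection to $U$ both weakens the constraint on $v$ (fewer edges $g$ to extend) and strengthens it (the extensions $g\cup\{v\}$ must remain in $G'$, not merely in $G$).
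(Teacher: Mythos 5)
Your proof is correct and follows essentially the same dependent random choice strategy as the paper: sample $\lambda d$ vertices from $V_t$, restrict to edges all of whose $V_t$-substitutions survive, bound the expected edge count by convexity, union-bound the expected number of problematic $d$-sets, and verify that the inherited extending property to parts $V_r$ with $r<t$ holds deterministically. The only cosmetic difference is that you finish by bounding $\E[|E(G')|\cdot\mathbf{1}_{Y=0}]$ directly, whereas the paper conditions on the good event and uses $\E[e(G')\mid P]\geq\E[e(G')]-\Pr[\overline P]\,\E[e(G')\mid\overline P]$; these are interchangeable, and the arithmetic at the end checks out in both versions.
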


\begin{proof}
Choose $u = \lambda d$ vertices uniformly at random from $V_t$ with replacement and let $X_t$ be the resulting set of vertices.  Let $G'$ be the subhypergraph of $G$ containing only those edges $e=\{e_1,\ldots,e_k\}$ for which $(e\setminus\{e_t\})\cup\{x_t\}$ is an edge of $G$ for each $x_t\in X_t$.

For each $(k-1)$-edge $e\in E(G^{(k-2)}[V_{-t}])$, let $d_G(e)$ be the number of vertices (in $V_t$) that are mutual extensions of $\{e\}$. Then the probability that any such $e$ remains in $G'$ is $(d_G(e)/n)^u$. Since the number of edges of $G$ satisfies $e(G)=\sum_{e \in E(G^{(k-2)}[V_{-t}])} d_G(e) \geq pn^{k}$, by convexity
\[\mb E[e(G')]  = \sum_{e \in E(G^{(k-2)}[V_{-t}])}d_G(e) \left(\frac{d_G(e)}{n}\right)^u \geq \frac{e(G)^{u+1}}{n^u e(G^{(k-2)}[V_{-t}])^u}\geq  \frac{e(G)^{u+1}}{n^{ku}}\geq p^{u+1}n^{k}.\]
    
For a set of edges $F\subseteq E(G^{(k-2)}[V_{-t}])$ with fewer than $h$ mutual extensions in $G$, the probability that this subset remains in $(G')^{(k-2)}[V_{-t}]$ is less than $\left(\frac{h}{n}\right)^{u} = \left(\frac{h}{n}\right)^{\lambda d}$. Additionally, the total number of such sets $F\subseteq E(G^{(k-2)}[V_{-t}])$ that are spanned by at most $d$ vertices is less than $\binom{kn}{d}\cdot 2^{\binom{d}{k-1}}$. Let $P$ be the event that $G'$ is $(h,d)$-vertex-extending to $V_t$, i.e., the event that $G'$ does not contain any such set $F$. Because the parameters satisfy the inequality $n^{-1} \geq \left(\frac{h}{n}\right)^{\lambda d}\binom{kn}{d}\cdot 2^{\binom{d}{k-1}}$,
\[\mb E[e(G') \mid P] \geq \mb E[e(G')] - \Pr[\overline{P}]\E[e(G') \mid \overline{P}] \geq p^{u+1}n^{k} - n^{k-1}.\]
Because $p<1/2$ and $d\ge 2$, we have that $p^{u+1}n^k\geq 2p^{u+2}n^k$ and $n^{k-1}\leq p^{(\lambda+1)d}n^{k}\leq p^{u+2}n^k$. Hence, $\E[e(G')\mid P]\geq p^{u+2}n^k$. It follows that there is a set $X_t$ of at most $u$ vertices such that the corresponding hypergraph $G'$ contains at least $p^{u+2}n^{k}\ge p^{(\lambda+1)d}n^k$ edges and is $(h, d)$-vertex-extending to $V_t$.

It remains to argue that $G'$ is also $(h, d)$-vertex-extending to $V_r$ for each $1\leq r < t$. Let $S \subseteq V_{-r}$ be a set of at most $d$ vertices. Then $S\cup X_t$ is a set of at most $d + u \leq (\lambda+1)d$ vertices in $V_{-r}$. Because $G$ is $(h,(\lambda+1)d)$-vertex-extending to $V_r$, there are at least $h$ vertices (in $V_r$) that are mutual extensions in $G$ of $E(G^{(k-2)}[S \cup X_t])$. For each mutual extension $v\in V_r$ and each edge $e\in E((G')^{(k-2)}[S])$, we claim that $e\cup \{v\}\in E(G')$. Indeed, because $e\in E((G')^{(k-2)}[V_{-r}])$, there exists some $b_r \in V_r$ such that $e\cup \{b_r\}\in E(G')$. Writing $\{e_t\}=e\cap V_t$, this implies that $(e\setminus \{e_t\}) \cup \{b_r, x_t\}\in E(G)$ for every $x_t\in X_t$. Hence, $(e\setminus \{e_t\}) \cup \{x_t\} \in E(G^{(k-2)}[S \cup X_t])$, implying that $(e\setminus \{e_t\}) \cup \{x_t,v\} \in E(G)$ for every $x_t \in X_t$. By construction, it follows that $e\cup\{v\}\in E(G')$. Thus, there are at least $h$ vertices (in $V_r$) that are mutual extensions in $G'$ of $E((G')^{(k-2)}[S])$, so $G'$ is $(h,d)$-vertex-extending to $V_r$.
\end{proof}

We state a corollary of the prior lemma that is easier to work with.

\begin{corollary}\label{lem:intermediate-steps}
Let $d\geq 4$, $k\geq 3$, $t$ be positive integers with $1\leq t \leq k$. Then for any $\epsilon\in (0, \frac{1}{3d}]$ and all $n \geq n_0=n_0(k, d, \epsilon)$, the following holds. Let $G$ be a $k$-partite $k$-uniform hypergraph on vertex set $V_1\sqcup\cdots\sqcup V_k$ with $n$ vertices in each part. Suppose that $G$ is $(n^{1/3}, 3d)$-vertex-extending to $V_r$ for all $r < t$ and that $e(G) \geq n^{k-\epsilon}$. 

Then there exists a subhypergraph $G'$ of $G$ on the same vertex set and a nonempty subset $X_t\subseteq V_t$ such that $G'$ is $(n^{1/3}, d)$-vertex-extending to $V_r$ for all $r \leq t$, we have $e(G')\geq n^{k-3d\epsilon}$, and, moreover, the edges of $G'$ are exactly those edges $e\in E(G)$ satisfying $(e\setminus V_t)\cup \{x_t\}\in E(G)$ for each $x_t\in X_t$.
\end{corollary}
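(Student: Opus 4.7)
The plan is to obtain this corollary as an immediate consequence of \cref{lem:intermediate-step-quantitative} by plugging in well-chosen parameters. I would set $\lambda = 2$, $h = n^{1/3}$, and $p = n^{-\epsilon}$, so that $(\lambda+1)d = 3d$ matches the vertex-extension hypothesis on $G$ for $r < t$, and so that $pn^k = n^{k-\epsilon} \leq e(G)$. Under these choices the conclusion of the lemma produces a subhypergraph $G' \subseteq G$ that is $(n^{1/3}, d)$-vertex-extending to $V_r$ for all $r \leq t$, with at least $p^{(\lambda+1)d}n^k = n^{k-3d\epsilon}$ edges, which is exactly what the corollary claims.

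The only remaining task is to verify the two numerical inequalities in the hypothesis of \cref{lem:intermediate-step-quantitative}. The first, $p^{(\lambda+1)d}n \geq 1$, rearranges to $\epsilon \leq 1/(3d)$, which is precisely the range of $\epsilon$ assumed in the corollary. The second becomes
\[\left(\frac{h}{n}\right)^{\lambda d}\binom{kn}{d}2^{\binom{d}{k-1}} \;\leq\; n^{-4d/3}\cdot O_{k,d}(n^d) \;=\; O_{k,d}\!\left(n^{-d/3}\right),\]
and we need this to be at most $n^{-1}$. This reduces to $n^{d/3 - 1} = \Omega_{k,d}(1)$, and this is exactly where the hypothesis $d \geq 4$ enters: it guarantees $d/3 - 1 \geq 1/3 > 0$, so the inequality holds for all $n \geq n_0(k, d, \epsilon)$.

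To produce the claimed nonempty set $X_t \subseteq V_t$ and the explicit description of $E(G')$, I would unwrap the proof of \cref{lem:intermediate-step-quantitative}: there $X_t$ is the (multi)set of $u = \lambda d = 2d \geq 1$ vertices sampled from $V_t$ with replacement, so its underlying set is nonempty, and by construction $G'$ contains exactly those edges $e \in E(G)$ for which $(e\setminus\{e_t\})\cup\{x_t\} \in E(G)$ for every $x_t \in X_t$, where $\{e_t\} = e\cap V_t$. In a $k$-partite $k$-uniform hypergraph $e\setminus\{e_t\} = e\setminus V_t$, so this matches the description required by the corollary. There is no substantive obstacle here beyond the parameter bookkeeping above, since the real work has already been done in \cref{lem:intermediate-step-quantitative}; the role of this corollary is merely to repackage it in a form convenient for iteration over $t$.
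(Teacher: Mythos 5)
Your proof is correct and takes essentially the same approach as the paper's: both plug $\lambda=2$, $h=n^{1/3}$, $p=n^{-\epsilon}$ into \cref{lem:intermediate-step-quantitative} and verify the two hypotheses, with $d\geq 4$ making the second inequality hold for large $n$. You go slightly further than the paper's written proof by explicitly tracing $X_t$ and the exact description of $E(G')$ back through the construction in the lemma, which is a point the paper leaves implicit but which the corollary statement does require.
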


\begin{proof}
We apply \cref{lem:intermediate-step-quantitative} with parameters $p = n^{-\epsilon}$, $\lambda = 2$, and $h = n^{1/3}$. For $n$ sufficiently large, the assumption $p\in (0, 1/2)$ is met. Moreover, $e(G) \ge n^{k-\epsilon} = pn^k$ and $G$ is $(n^{1/3}, 3d)$-vertex-extending to $V_r$ for all $r < t$.

It remains to check that the two inequalities are satisfied. Because $\epsilon \le \frac{1}{3d}$, it holds that $p^{(\lambda+1)d}n = n^{1-3d\epsilon}\ge 1$. Additionally,
\[n\left(\frac{h}{n}\right)^{\lambda d}\binom{kn}{d}2^{\binom{d}{k-1}} \le n\cdot n^{-4d/3}(kn)^d2^{\binom{d}{k-1}} = n^{1-d/3}\cdot k^d2^{\binom{d}{k-1}}.\]
Because $d\ge 4$, the right side is less than $1$ for all sufficiently large $n$. Thus, the hypotheses of \cref{lem:intermediate-step-quantitative} are satisfied.

Applying the lemma yields a subhypergraph $G'$ with $e(G') \ge p^{(\lambda+1)d}n^k = n^{k-3d\epsilon}$ satisfying the desired vertex-extending property.
\end{proof}

Applying \cref{lem:intermediate-steps} repeated $k$ times, one may show that there is a constant $c_k>0$ such that if $G$ is an $n$-vertex hypergraph with at least $n^{k-c_k/d^k}$ edges and $n$ is sufficiently large in terms of $k$ and $d$, then $G$ has a nonempty subhypergraph that is $(n^{1/3}, d)$-vertex-extending to every part. However, to reach $n^{k-c_k/d^{k-1}}$ (i.e., the bound in \cref{thm:k-uniform-pruning}), the last step must be handled slightly differently.

\begin{proof}[Proof of \cref{thm:k-uniform-pruning}]
We apply \cref{lem:intermediate-steps} repeated $k-1$ times to produce a sequence of hypergraphs $G = G_0 \supseteq G_1 \supseteq G_2\supseteq\cdots \supseteq G_{k-1}$. In detail, for each $t = 1, \ldots, k-1$, define $d_t = 3^{k-t+1}d$. Given sufficiently small $\epsilon_0>0$, define 
\[\epsilon_i = \left(\prod_{t=1}^{i} 3d_t\right)\epsilon_0.\]
For $\epsilon_0 \leq 3^{-k}3^{-\frac{k(k+1)}{2}}d^{-k+1} = 3^{-\frac{k(k+3)}{2}}d^{-k+1}$ we have $\epsilon_i\leq\tfrac1{3d}$ for all $1\leq i\leq k-2$ and $\epsilon_{k-1}\leq \tfrac13 < \tfrac12$.

Suppose $n$ is sufficiently large in terms of $k, d$, and that $e(G)\geq n^{k-\epsilon_0}$. Let $G_i$ be the subhypergraph of $G_{i-1}$ obtained by applying \cref{lem:intermediate-steps} with parameters $(d,k,t,\epsilon)=(d_i,k,i,\epsilon_i)$. If $e(G_{i-1})\geq n^{k-\epsilon_{i-1}}$, then $e(G_i)\geq n^{k-3d_i\epsilon_{i-1}}=n^{k-\epsilon_i}$.

After $k-1$ iterations, this process produces a hypergraph $G_{k-1}$ with $e(G_{k-1}) \geq n^{k-\epsilon_{k-1}} \geq n^{k-1/2}$ that is $(n^{1/3}, 9d)$-vertex-extending to $V_r$ for every $1\leq r\leq k-1$. Moreover, for each $1\leq r\leq k-1$, there exists a nonempty subset $X_r\subseteq V_r$ such that each edge $e\in E(G_{r-1})$ is in $E(G_r)$ if and only if $(e\setminus V_r)\cup \{x_r\}\in E(G_{r-1})$ for every $x_r\in X_r$. 

\begin{claim}\label{claim:edgesinproducts}
For every $e\in E(G_{k-1})$ we have $X_1\times \cdots \times X_{k-1}\times (e\cap V_{k})\subseteq E(G_{k-1})$. 
\end{claim}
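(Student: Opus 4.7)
The plan is to prove by induction on $r$ the strengthened statement: for every $0 \le r \le k-1$, every edge $e = \{e_1, \ldots, e_k\} \in E(G_r)$ (with $e_i \in V_i$), and every choice of $x_i \in X_i$ for $1 \le i \le r$, the set $\{x_1, \ldots, x_r, e_{r+1}, \ldots, e_k\}$ is an edge of $G_r$. The claim itself is then the specialization $r = k-1$, noting that $e_k$ is the unique element of $e \cap V_k$.

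The base case $r = 0$ is vacuous. For the inductive step, I would assume the statement at level $r$ and fix $e \in E(G_{r+1})$ together with $x_i \in X_i$ for $1 \le i \le r+1$; write $g := \{x_1, \ldots, x_{r+1}, e_{r+2}, \ldots, e_k\}$. By the characterization of $E(G_{r+1})$ recorded just before the claim, it suffices to show that $(g \setminus V_{r+1}) \cup \{y_{r+1}\} = \{x_1, \ldots, x_r, y_{r+1}, e_{r+2}, \ldots, e_k\}$ lies in $E(G_r)$ for every $y_{r+1} \in X_{r+1}$; the choice $y_{r+1} = x_{r+1}$ simultaneously recovers $g \in E(G_r)$ itself. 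For any such $y_{r+1}$, the assumption $e \in E(G_{r+1})$ gives $f := (e \setminus V_{r+1}) \cup \{y_{r+1}\} \in E(G_r)$, and applying the inductive hypothesis to $f$ with the already-chosen $x_1, \ldots, x_r$ produces exactly $\{x_1, \ldots, x_r, y_{r+1}, e_{r+2}, \ldots, e_k\} \in E(G_r)$, as required.

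The only real subtlety is bookkeeping: the swap rule $h \mapsto (h \setminus V_s) \cup \{x_s\}$ drops the $G$-index by one at each application, so a naive argument that simply substitutes the $V_{k-1}, V_{k-2}, \ldots, V_1$ coordinates in turn would land in $E(G_0) = E(G)$ and then require a separate ``climb back up'' to conclude membership in $E(G_{k-1})$. Packaging the descent and the $E(G_r)$ membership together into the single inductive statement above avoids this issue, since the hypothesis at level $r$ directly delivers an edge of $G_r$ (rather than of some earlier $G_j$), and feeds cleanly into level $r+1$.
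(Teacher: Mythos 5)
Your proof is correct. The key insight---strengthening the statement to ``for every $e \in E(G_r)$ and every $x_1 \in X_1, \ldots, x_r \in X_r$, we have $\{x_1,\ldots,x_r,e_{r+1},\ldots,e_k\} \in E(G_r)$''---cleanly handles the step from $G_r$ to $G_{r+1}$: the characterization of $E(G_{r+1})$ requires checking membership in $E(G_r)$ after the swap in coordinate $r+1$, and your hypothesis at level $r$ supplies exactly that after replacing coordinates $1,\ldots,r$ as well. You also correctly note that taking $y_{r+1} = x_{r+1}$ recovers membership of $g$ itself in $E(G_r)$, so a single quantified check covers both requirements of the characterization.

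The paper's own proof is organized differently: it runs two separate inductions. The first is a \emph{decreasing} induction on $t$ showing $\{e_1\}\times\cdots\times\{e_t\}\times X_{t+1}\times\cdots\times X_{k-1}\times\{e_k\} \subseteq E(G_t)$, which at $t=0$ lands you in $E(G_0)=E(G)$; the second is an \emph{increasing} induction climbing back up to establish $X_1\times\cdots\times X_{k-1}\times\{e_k\}\subseteq E(G_t)$ for all $t$ up to $k-1$. Your remark about the ``naive'' approach needing a separate climb back up is in fact a description of the paper's two-pass structure. Your single forward induction with a strengthened hypothesis merges the descent and ascent into one pass, which is tidier and avoids the bookkeeping of tracking which $G_j$ an edge currently belongs to. Both arguments exploit the same recursion and are essentially equivalent in content; yours is the more economical packaging.
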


\begin{proof}
For each $t$, let $\{e_t\} = e\cap V_t$. We first show that $\{e_1\}\times \cdots \times \{e_t\}\times X_{t+1}\times \cdots \times X_{k-1}\times \{e_k\} \subseteq E(G_t)$, using induction on $t = k-1, \dots, 0$. When $t = k-1$, this trivially holds. Suppose that this is true for $t\ge 1$, and we would like to show this for $t-1$.

Let $f\in \{e_1\}\times \cdots \times \{e_t\}\times X_{t+1}\times \cdots \times X_{k-1}\times \{e_k\}$. Because $f\in E(G_t)$, it holds that $(f\setminus V_t)\cup \{x_t\}\in E(G_{t-1})$ for all $x_t\in X_t$. This means that $\{e_1\}\times  \cdots\times \{e_{t-1}\} \times X_t\times X_{t+1}\times \cdots \times X_{k-1}\times \{e_k\} \subseteq E(G_{t-1})$, completing the induction.

We have established that $X_1\times \cdots \times X_{k-1}\times \{e_k\}\subseteq E(G_0)$. We now use induction to prove that $X_1\times \cdots \times X_{k-1}\times \{e_k\}\subseteq E(G_t)$ for all $0\leq t\leq k-1$. Suppose that this holds for $t-1$ where $t\geq 1$. We would like to show this for $t$. Let $f \in X_1\times \cdots \times X_{k-1}\times \{e_k\}$. By the inductive hypothesis, $(f \setminus V_t)\cup \{x_t\} \in X_1\times \cdots \times X_{k-1}\times \{e_k\} \subseteq E(G_{t-1})$ for all $x_t\in X_t$. Hence, by construction, we know that $f \in E(G_t)$. When $t=0$, this inductive argument establishes $X_1\times \cdots \times X_{k-1}\times \{e_k\}\subseteq E(G_{k-1})$, as desired.
\end{proof}
    
Let $B_k\subseteq V_k$ be the set of vertices incident to at least one edge of $G_{k-1}$. Then $n^{k-1}|B_k|\geq e(G_{k-1})\geq n^{k-1/2}$, so $|B_k| \geq n^{1/2}$. Also, by \cref{claim:edgesinproducts}, we have that $X_1\times \cdots \times X_{k-1}\times B_k\subseteq E(G_{k-1})$. Now choose $8d$ vertices uniformly at random from $B_k$ with replacement and let $X_k$ be the resulting set of vertices. Let $G_k$ be the subhypergraph of $G_{k-1}$ containing only those edges $e = \{e_1, \dots, e_k\}$ for which $(e\setminus V_k)\cup \{x_k\}$ is an edge of $G_{k-1}$ for each $x_k\in X_k$. (This is identical to the previous procedure except that $X_k$ is chosen from $B_k$ instead of from $V_k$.) We note that $X_1 \times X_2 \times \cdots \times X_k \subseteq E(G_k)$ by construction, so $G_k$ is not empty. To complete the proof we need to show that $G_k$ is $(n^{1/3},d)$-vertex-extending to each part. 

To show that $G_k$ is $(n^{1/3},d)$-vertex-extending to $V_k$ we rely on the argument used to prove \cref{lem:intermediate-step-quantitative}. There are fewer than $\binom{kn}{d}\cdot 2^{\binom{d}{k-1}}$ edge sets spanned by a size-$d$ vertex set in $V(G) \setminus V_k$. For each such set with at most $n^{1/3}$ mutual extensions in $G_{k-1}$, the probability that it is retained in $G_k$ is at most $(n^{1/3} / |B_k|)^{8d} \leq n^{-4d/3}$. Applying a union bound, the probability that $E(G_k)$ contains such a subset is at most $O(n^{-d/3})$. Hence, when $n$ is sufficiently large, $G_k$ is $(n^{1/3},d)$-vertex-extending to $V_k$ with positive probability.

It remains to prove that $G_k$ is $(n^{1/3},d)$-vertex extending to $V_r$ for each $1 \leq r < k$. Here too we mimic the proof of \cref{lem:intermediate-step-quantitative}. Fix $1 \leq r < k$ and consider any $S \subseteq V(G) \setminus V_r$ of size $d$. Set $S' = S \cup X_t$ and observe that $|S'| \leq 9d$. Recall that $G_{k-1}$ is $(n^{1/3},9d)$-vertex-extending to $V_r$.  Hence, there is a set $A_S \subseteq V_r$ of size at least $n^{1/3}$ such that, for every $e \in E(G_{k-1}^{(k-2)}[S'])$ and every $v \in A_S$, we have $e \cup \{v\} \in E(G_{k-1})$. We show that for every $e \in E(G_k^{(k-2)}[S])$ and every $v \in A_S$ it is the case that $e \cup \{v\} \in E(G_k)$, thus witnessing the vertex extension property of $G_k$. Let $e \in E(G_k^{(k-2)}[S])$. By definition, there exists a vertex $b \in V_r$ such that $e \cup \{b\} \in E(G_k)$. By construction, this implies that $(e \setminus \{e_k\}) \cup \{b,x\} \in E(G_{k-1})$, and hence $(e \setminus \{e_k\}) \cup \{x\} \in E(G_{k-1}^{(k-2)}[S'])$, for each $x \in X_k$. By the defining property of $A_S$, it holds that $(e \setminus \{e_k\}) \cup \{x,v\} \in E(G_{k-1})$ for each $v \in A_S$ and $x \in X_k$, so $e \cup \{v\} \in E(G_k)$ as desired.
\end{proof}

Finally we use the vertex-extending property guaranteed by \cref{thm:k-uniform-pruning} to embed any given hypergraph with bounded skeletal degeneracy. This requires a small trick since we only know that the $(k-1)$-edges extend to $k$-edges instead of the stronger property that all $i$-edges extend to $(i+1)$-edges.

\begin{theorem}
\label{thm:k-unif-turan-upper}
Let $H$ be a $k$-uniform $k$-partite hypergraph with $d_1(H)=d$. There exist constants $c_k>0$ (depending only on $k$) and $C_H$ such that every $n$-vertex $k$-uniform hypergraph on with at least $C_Hn^{k-c_k/d^{k-1}}$ edges contains a copy of $H$.
\end{theorem}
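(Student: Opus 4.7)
The plan is to reduce to a $k$-partite host by random partition, apply \Cref{thm:k-uniform-pruning}, and then embed $H$ greedily in a $1$-skeleton degeneracy ordering using the vertex-extending property of the pruned host.

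First, a random partition of $V(G)$ into $k$ color classes makes each edge rainbow with probability $k!/k^k$; for $C_H$ sufficiently large this yields a $k$-partite subhypergraph $G_0\subseteq G$ on parts $V_1\sqcup\cdots\sqcup V_k$ with $|V_i|=m=\Theta(n)$ and at least $m^{k-c_k/d^{k-1}}$ rainbow edges. Applying \Cref{thm:k-uniform-pruning} then produces a nonempty $G'\subseteq G_0$ that is $(m^{1/3},d)$-vertex-extending to every part $V_t$; if the embedding step requires slack, we instead apply the pruning with parameter a constant-factor-larger than $d$, adjusting only the constant $c_k$.

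Next, fix an ordering $v_1,\ldots,v_h$ of $V(H)$ witnessing $d_1(H)=d$, so each $v_l$ has at most $d$ back-neighbors $N_l$ in $H^{(1)}$, and identify the parts $U_1,\ldots,U_k$ of $H$ with $V_1,\ldots,V_k$ of $G'$ via some bijection (trying all $k!$ if necessary). Processing vertices in order, at step $l$ with $v_l\in U_t$ set $S_l=f(N_l)\subseteq V_{-t}$ (of size $\leq d$) and use the vertex-extending property to pick $f(v_l)\in V_t$ that mutually extends every $(k-1)$-edge of $G'^{(k-2)}[S_l]$, avoiding previously used vertices. Since vertex-extending guarantees at least $m^{1/3}\gg|V(H)|$ such candidates, the choice is always available.

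The hard part will be correctness. For each $k$-edge $e^*=\{u_1,\ldots,u_{k-1},v_l\}$ of $H$ with top $v_l$, we need $f(e^*)\in E(G')$; by mutual extension this reduces to knowing that $\{f(u_1),\ldots,f(u_{k-1})\}$ already lies in some $k$-edge of $G'$, i.e., is a $(k-1)$-edge of $G'^{(k-2)}[S_l]$. I would maintain this inductively via the stronger invariant that, at every step, the image of every fully-embedded $(k-1)$-edge of $H^{(k-2)}$ is contained in some $k$-edge of $G'$. This is easy to preserve when the newly-embedded $v_l$ becomes the top of a $(k-1)$-edge $\{u_1,\ldots,u_{k-2},v_l\}$ of $H^{(k-2)}$ whose $k$-edge completion in $H$ uses a vertex $w$ already embedded: then $w\in N_l$, the inductive hypothesis applied to $\{u_1,\ldots,u_{k-2},w\}\subseteq N_l$ supplies a $(k-1)$-edge of $G'^{(k-2)}[S_l]$ containing $\{f(u_1),\ldots,f(u_{k-2})\}$, and mutual extension closes the step. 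The ``small trick'' handles the remaining case, where the only $k$-edge completion uses a vertex ordered after $v_l$; I anticipate this is resolved by propagating analogous invariants down the lower skeleton levels of $H$, each level maintained by applying vertex-extending to $S_l$ enlarged by auxiliary witness vertices drawn from $G'$. Tracking the cumulative slack through $k-1$ skeleton levels is what forces the parameter in the pruning step to be a polynomial in $d$ and accounts for the $d^{k-1}$ in the exponent of the final bound.
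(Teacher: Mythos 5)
Your overall architecture matches the paper's: random partition to get a $k$-partite host with comparable density, prune via \cref{thm:k-uniform-pruning}, then embed greedily in a $1$-skeleton degeneracy order using vertex-extending. Your handling of what you call case (a) is also correct: when some completing vertex $w$ of the $(k-1)$-edge $\{u_1,\dots,u_{k-2},v_l\}$ precedes $v_l$, the invariant at $\{u_1,\dots,u_{k-2},w\}\subseteq N_l$ plus mutual extension does show $\phi(\{u_1,\dots,u_{k-2},v_l\})$ is a partial edge.

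The genuine gap is case (b): when \emph{every} $k$-edge of $H$ containing $\{u_1,\dots,u_{k-2},v_l\}$ has its remaining vertex after $v_l$ in the ordering. At the moment you embed $v_l$ you have no mechanism to ensure $\phi(\{u_1,\dots,u_{k-2},v_l\})$ lies in a $k$-edge of $G'$, since the vertex-extending property only gives extensions of those $(k-1)$-edges \emph{already} present in $G'^{(k-2)}[S_l]$; it provides no control over which new $(k-1)$-sets become partial edges. Your proposal to ``propagate analogous invariants down the lower skeleton levels of $H$, each level maintained by applying vertex-extending to $S_l$ enlarged by auxiliary witness vertices drawn from $G'$'' is not a proof: there is no obvious invariant of this form that vertex-extending (which governs only $(k-1)$-to-$k$ extensions) can maintain at lower skeleton levels, and it is not clear how to select and track such witnesses consistently across many steps without the degree bound in the skeleton blowing up.

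The paper's resolution is different from what you sketch and worth stating precisely: one modifies the \emph{target} hypergraph $H$, not the ordering or the host. Let $\hat H$ be obtained from $H$ by adding one auxiliary vertex $\hat w_i$ per part $W_i$, and for every $e\in E(H)$ and every $I\subseteq[k]$ add the edge obtained from $e$ by replacing its $W_i$-vertex with $\hat w_i$ for each $i\in I$. Then $d_1(\hat H)\le d_1(H)+k$, and one prunes with parameter $d+k$ accordingly (this is an additive, not a multiplicative or polynomial, overhead). Ordering $V(\hat H)$ with $\hat w_1,\dots,\hat w_k$ first (mapped to some edge of $G''$) and the rest in the original degeneracy order, case (b) disappears: for any edge $e\cup\{v_j\}$ of $\hat H$ with $v_j\in W_i$, the edge $e\cup\{\hat w_i\}$ also belongs to $\hat H$ and is fully embedded before $v_j$, so $\phi(e)$ is a partial edge by the invariant, and mutual extension closes the step. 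A copy of $\hat H$ contains $H$. Finally, your remark that ``tracking cumulative slack through $k-1$ skeleton levels \dots accounts for the $d^{k-1}$ in the exponent'' is a misattribution: the $d^{k-1}$ arises entirely in the pruning step (\cref{thm:k-uniform-pruning}, through the $k-1$ iterations of \cref{lem:intermediate-steps}); the embedding step only needs the degeneracy parameter to grow from $d$ to $d+k$.
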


\begin{proof}
Because adding a constant number of vertices does not affect the statement up to the choice of $C_H$, we may assume that $n$ is a multiple of $k$.
We will apply \cref{thm:k-uniform-pruning} with parameters $k,k+d$. Let $c_k$ and $n_0$ be the constants produced by that result.

Let $H$ be a $k$-uniform $k$-partite hypergraph with $d_1(H) = d$. We may assume that $H$ is nonempty, in which case $d\geq k-1$. Let $C_H$ be a sufficiently large constant and set $c'_k = 3^{-k}c_k$. Let $G$ be a $k$-uniform hypergraph with $n$ vertices and at least $C_H n^{k-c'_k/d^{k-1}}$ edges. By setting $C_H$ large enough we may assume that the number of vertices of $H$ satisfies $n\geq k(v(H)+k)^3$ and $n\geq kn_0$. Indeed, the statement holds vacuously whenever $C_Hn^{k-c'_k/d^{k-1}}>n^k$.

We claim that $G$ contains a nonempty $k$-partite subhypergraph $G''$ with vertex partition $V(G)=V_1\sqcup\cdots\sqcup V_k$ that is $((n/k)^{1/3},d+k)$-vertex-extending to each part. Indeed, let $V_1\sqcup\cdots\sqcup V_k$ be an equitable partition of $V(G)$ (i.e., a partition in which $|V_i|=n/k$ for each $i$) chosen uniformly at random. Let $G' \subseteq G$ be the $k$-partite subhypergraph induced by this partition. Assuming that $C_H$ is sufficiently large, we have the inequality
\[\E[e(G')]=\frac{k!(n/k)^k}{n(n-1)\cdots(n-k+1)}\geq \frac{k!}{k^k}e(G) \geq n^{k-\frac{c'_k}{d^{k-1}}}\geq n^{k-\frac{c_k}{(d+k)^{k-1}}}.\]
Hence there is some choice of $G'$, a $k$-partite subhypergraph of $G$ with at least $n^{k- c_k/(d+k)^{k-1}}$ edges. We then apply \cref{thm:k-uniform-pruning} to $G'$ to produce $G''$.

Define $\hat H$ to be the $k$-uniform $k$-partite hypergraph formed from $H$ by adding a single auxiliary vertex to each part. Call the vertex partition $V(\hat H)=W_1\sqcup\cdots\sqcup W_k$ and let $\hat w_i\in W_i$ denote the auxiliary vertices. Let the edges of $\hat H$ be of the form
\[\{\hat w_i: i\in I\}\cup \left(e\cap \bigcup_{i\in[k]\setminus I} W_i\right)\]
for $e\in E(H)$ and $I\subseteq [k]$. In other words, the edges of $\hat H$ are formed from the edges of $H$ by replacing any number of vertices with the corresponding auxiliary vertices. We note that $d_1(\hat H)\leq d_1(H)+k$, because every edge present in $\hat H^{(1)}$ but not in $H^{(1)}$ contains at least one of the $k$ auxiliary vertices.

We now greedily construct an embedding $\phi\colon V(\hat H)\to V(G'')$ according to the degeneracy order. Let $v_1,\ldots,v_h$ be an ordering of $V(\hat H)$ in which $v_i=\hat w_i$ for $1\leq i \leq k$ and each subsequent vertex in the ordering is adjacent in $\hat H^{(1)}$ to at most $d+k$ previous vertices. We will define $\phi$ iteratively so that it maps vertices of $W_i$ to vertices of $V_i$ and edges of $\hat H$ to edges of $G''$.

First, define $\phi$ to map $\{v_1,\ldots,v_k\}$ to an edge of $G''$. We can do this because $G''$ is nonempty. Now suppose that we have embedded $v_1,\ldots,v_{j-1}$ for some $j>k$. Our goal is to embed $v_j\in W_i$ in a way that maintains these properties. Let $S\subseteq\{v_1,\ldots,v_{j-1}\}$ comprise those neighbors of $v_j$ in $\hat H^{(1)}$ preceding it in the ordering. Note that $|S|\leq d_1(\hat H)\leq d+k$. Because $G''$ is $((n/k)^{1/3},d+k)$-vertex extending, there are at least $(n/k)^{1/3}$ vertices $v\in V_i$ such that $e\cup\{v\}\in E(G'')$ for each $e\subseteq \phi(S)$ such that $e\in E(G''^{(k-2)}[V_{-i}])$. Pick $\phi(v_j)$ to be one of these vertices which has not already appeared in the image of $\phi$. We can do this because we assumed that $(n/k)^{1/3}\geq v(\hat H)=v(H)+k$.

We claim that $\phi$ still maps edges of $\hat H$ to edges of $G$ after this choice of $\phi(v_j)$. Let $e\cup\{v_j\}\in E(\hat H)$ be some edge. We know that $e\cup\{\hat w_i\}$ is a previously-embedded edge. In particular, we know that $\phi(e\cup\{\hat w_i\}) \in E(G'')$, so $\phi(e) \in E(G''^{(k-2)})$. By our choice of $\phi(v_j)$, we conclude that $\phi(e)\cup\{\phi(v_j)\} \in E(G'')$, as desired.

This embedding procedure terminates in a copy of $\hat H$ in $G''\subseteq G$. Because $H$ is a subhypergraph of $\hat H$, this completes the proof.
\end{proof}

\section{Lower bounds on the Tur\'an number}
\label{sec:turan-lower}

In this section we prove several lower bounds on the Tur\'an number. We give lower bounds for some specific classes of hypergraphs an well as a general lower bound in terms of the $i$-th skeletal degeneracy.

In this section we write $\mathbb{G}^k(n;p)$ for the Erd\H{o}s--R\'enyi random $k$-uniform hypergraph. In detail, $\mathbb{G}^k(n;p)$ has $n$ vertices and each of the $\binom nk$ possible $k$-edges is present independently with probability $p$.

We first give a lower bound for the Tur\'an number of $K^{(k)}_{d,d,\ldots,d}$. This result follows from standard probabilistic tools. We include the proof for completeness, since this lower bound on the Tur\'an number of $K^{(k)}_{d,d,\ldots,d}$ shows that the upper bound in \cref{thm:k-unif-turan-main} is tight (up to the constant $c_k$) in some cases.

\begin{proposition}
\label{prop:complete-k-unif-k-part-lower}
For each $d,k\geq 2$ we have
\[\ex(n,K^{(k)}_{d,d,\ldots,d})\geq\Omega_k(n^{k-k/d^{k-1}})\]
while $d_1(K^{(k)}_{d,d,\ldots,d})=(k-1)d$.
\end{proposition}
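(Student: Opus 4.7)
The plan is to establish the Tur\'an lower bound via the standard probabilistic deletion method on the random $k$-uniform hypergraph $\mathbb{G}^k(n;p)$, and to verify the degeneracy claim by a direct computation on the $1$-skeleton.

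For the Tur\'an bound, I will sample $G \sim \mathbb{G}^k(n;p)$ and delete one edge from every copy of $K^{(k)}_{d,d,\ldots,d}$. Since $K^{(k)}_{d,d,\ldots,d}$ has $kd$ vertices and $d^k$ edges, linearity of expectation gives $\mathbb{E}[e(G)] = \Theta(n^k p)$, and counting labeled copies by their $k$ disjoint $d$-subsets yields $\mathbb{E}[\#\text{copies}] \leq n^{kd} p^{d^k}$. The crucial observation is that choosing $p = c_k n^{-k/d^{k-1}}$ produces the identity $kd - k d^k / d^{k-1} = 0$, so the copies bound becomes $c_k^{d^k}$, an absolute constant, while the expected edge count is $\Theta_k(n^{k - k/d^{k-1}})$. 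Taking $c_k$ small enough in terms of $k$, Markov's inequality produces an outcome in which the copies number at most $\tfrac12 \mathbb{E}[e(G)]$; deleting one edge from each copy yields a $K^{(k)}_{d,d,\ldots,d}$-free hypergraph with $\Omega_k(n^{k - k/d^{k-1}})$ edges.

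For the degeneracy equality, I observe that the $1$-skeleton of $K^{(k)}_{d,d,\ldots,d}$ is exactly the complete $k$-partite graph $K_{d,d,\ldots,d}$, since any pair of vertices from different parts is contained in some $k$-edge. This graph is $(k-1)d$-regular. The full graph has minimum degree $(k-1)d$, forcing $d_1 \geq (k-1)d$. Conversely, any vertex ordering places at most $(k-1)d$ edges behind each vertex, so $d_1 \leq (k-1)d$. Equality follows.

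There is no real obstacle in this argument; the only delicate point is that the clean identity $kd = k \cdot d^k / d^{k-1}$ is precisely what makes the deletion method tight (up to the constant) and justifies the exact exponent $k - k/d^{k-1}$ in \cref{thm:k-unif-turan-main}. Combined with $d_1(K^{(k)}_{d,d,\ldots,d}) = (k-1)d$, this shows that the lower-bound exponent in \cref{thm:k-unif-turan-main} is of the correct form $k - C_k / d_1(H)$.
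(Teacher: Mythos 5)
Your proposal matches the paper's argument essentially step for step: the same probabilistic deletion on $\mathbb G^k(n;p)$ with $p\asymp n^{-k/d^{k-1}}$, the same counting bound $n^{kd}p^{d^k}$ for copies, and the same direct identification of the $1$-skeleton as the $(k-1)d$-regular graph $K_{d,\ldots,d}$. The one step worth tightening is the appeal to Markov's inequality: you need an outcome in which $e(G)-X$ is large, not merely one where $X$ is small, so the clean route (and the one the paper takes, with $c_k=1$) is to apply linearity of expectation directly to $e(G)-X$, obtaining $\mathbb E[e(G)-X]\geq \binom nk p - 1 = \Omega_k(n^{k-k/d^{k-1}})$ and picking an outcome achieving at least this value.
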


\begin{proof}
For ease of notation write $H$ for $K^{(k)}_{d,d,\ldots,d}$. Then the $1$-skeleton $H^{(1)}$ is the complete $k$-partite graph with $d$ vertices in each part. Since $H$ is $(k-1)d$-regular, we see that $\skel{H} = (k-1)d$.

Let $p = n^{-k/d^{k-1}}$ and let $G'\sim \mb G^{k}(n; p)$. Then we have $\mb E[e(G')] = \binom{n}{k}p$. On the other hand, the number $X$ of labeled copies of $H$ in $G'$ has expectation $\mb E[X] \leq n^{kd}p^{d^{k}} = 1$. Therefore there exists $G'$ such that \[e(G') - X \geq \binom{n}{k}p - 1 \geq \frac{1}{k^k}n^{k-\frac{k}{d^{k-1}}}-1.\] We may remove an edge for each copy of $H$ in $G'$ to get $G$. Now $G$ contains no copy of $H$ and has at least $\Omega_k(n^{k-k/d^{k-1}})$ edges.
\end{proof}

Next we prove a general lower bound for the Tur\'an number in terms of the $i$-th skeletal degeneracy for each $i$. The $i=1$ case of the following theorem implies the lower bound in \cref{thm:k-unif-turan-main}.

\begin{theorem}
\label{thm:k-uniform-construction-ith-degen}
Let $d, k, i$ be positive integers where $1 \leq i < k$ and $d> \binom{k}{i+1}$. Let $H$ be any $k$-uniform hypergraph with $d_i(H) \geq d$. Then \[\ex(n,H)\geq \Omega_k( n^{k- 3^k / d}).\]
\end{theorem}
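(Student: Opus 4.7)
The plan is to apply the deletion method with a twist: rather than sampling a random $k$-uniform hypergraph directly, I would sample an auxiliary random $(i+1)$-uniform hypergraph $R$, delete edges to destroy copies of a witness structure $J$, and then take the final $k$-uniform construction $G$ to consist of all $k$-subsets of $[n]$ whose $(i+1)$-subsets all lie in the surviving hypergraph $R'$.

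First I would extract the witness: since $d_i(H)\geq d$, the $(i+1)$-uniform hypergraph $H^{(i)}$ contains a subhypergraph $J$ with minimum degree at least $d$, so writing $v' = v(J)$, double counting gives $e(J)\geq v'd/(i+1)$. Next, take $q = c_0 n^{-(i+1)/d}$ for a small constant $c_0 = c_0(k) > 0$, sample $R \sim \mb G^{i+1}(n;q)$, delete one edge from each copy of $J$ in $R$ to obtain a $J$-free hypergraph $R'$, and set $G = \{f \in \binom{[n]}{k} : \binom{f}{i+1} \subseteq R'\}$. The structural observation is that $G$ is automatically $H$-free: any embedding $\phi\colon V(H) \to [n]$ of $H$ into $G$ would force $\phi(f) \in R'$ for every $(i+1)$-edge $f$ of $J \subseteq H^{(i)}$ (because $f \subseteq e$ for some $e \in E(H)$, so $\phi(f) \subseteq \phi(e) \in G$), which would produce an embedded copy of $J$ in $R'$.

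The remaining work is quantitative. The pre-deletion hypergraph $\tilde G = \{f : \binom{f}{i+1} \subseteq R\}$ has $\mb E|\tilde G| = \binom{n}{k}q^{\binom{k}{i+1}}$; each deletion from $R$ removes at most $\binom{n-i-1}{k-i-1}$ elements from $\tilde G$; and the expected number of copies of $J$ in $R$ is at most $n^{v'}q^{e(J)}$. To guarantee $\mb E|G| \geq \tfrac12 \mb E|\tilde G|$ it therefore suffices to check
\[n^{v'}q^{e(J)}\binom{n-i-1}{k-i-1} \;\leq\; \tfrac12 \binom{n}{k} q^{\binom{k}{i+1}},\]
and substituting $q = c_0 n^{-(i+1)/d}$ together with $e(J) \geq v'd/(i+1)$ reduces this to the exponent inequality $d \geq \binom{k}{i+1}$. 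The strict hypothesis $d > \binom{k}{i+1}$ provides the slack needed to absorb the polynomial correction factors from $\binom{n}{k}$, $\binom{n-i-1}{k-i-1}$, and the constant $c_0$.

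Combining with the elementary identity $(i+1)\binom{k}{i+1} = k\binom{k-1}{i} \leq k\cdot 2^{k-1} \leq 3^k$, which holds for all $k\geq 1$, we obtain $\mb E|G| = \Omega_k(n^{k-3^k/d})$, and an averaging argument extracts an $H$-free hypergraph of this size. The step I expect to be most delicate is verifying that the slack provided by the strict inequality $d > \binom{k}{i+1}$ is in fact sufficient once one carefully accounts for all the lower-order polynomial factors; the rest is a clean application of the deletion method to the auxiliary $(i+1)$-uniform random hypergraph.
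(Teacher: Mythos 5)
Your proposal is correct and takes essentially the same route as the paper: both extract an $(i+1)$-uniform subhypergraph of $H^{(i)}$ with minimum degree at least $d$, sample a random $(i+1)$-uniform hypergraph at density $\Theta(n^{-(i+1)/d})$, delete one $(i+1)$-edge per copy of the witness, and declare the $k$-edges of $G$ to be exactly the $(i+1)$-cliques of the pruned hypergraph; the bound $(i+1)\binom{k}{i+1}\le 3^k$ is used identically, and the strict hypothesis $d>\binom{k}{i+1}$ plays the same role of making the deletion cost $\binom{n-i-1}{k-i-1}\mb E[Y]=O(n^{k-i-1})$ negligible against $\mb E|\tilde G|=\Theta_k(n^{k-(i+1)\binom{k}{i+1}/d})$.
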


\begin{proof}
Suppose that $n$ is sufficiently large in terms of $k$. Write $r=i+1$ and set $p = n^{-r/d}$. Let $G_0\sim \mathbb{G}^r(n, p)$ and let $X$ be the number of copies of $K_k^{(r)}$ in $G_0$. We have $\mb E[X] = \binom{n}{k}p^{\binom{k}{r}}$. Since $d_i(H) \geq d$, there is an $r$-uniform hypergraph $F \subseteq H^{(i)}$ on $v$ vertices with minimum degree at least $d$. This implies that $e(F) \geq \frac{vd}{r}$. The number $Y$ of copies of $F$ in $G_0$ is in expectation at most $\mb E[Y]\le n^vp^{e(F)} \leq 1$. Hence there exists a choice of $G_0$ such that 
\[Z = X-\binom{n}{k-r}Y \ge \mb E\left[X-\binom{n}{k-r}Y\right]\geq \left(\frac nk\right)^k n^{-\frac rd \binom kr}-n^{k-r} \geq \Omega_k \left(n^{k- r\binom{k}{r} / d} \right).\]
Here, we have used the assumption that $d > \binom{k}{r}$. We note that $r\binom{k}{r} / d \leq 3^k / d$, so $Z \geq \Omega_k(n^{k - 3^k/d})$.

We construct $G_1$ from $G_0$ by removing a single $r$-edge for each copy of $F$ in $G_0$. By doing this the number of $r$-edges we remove is at most $Y$, and therefore the number of copies of $K_k^{(r)}$ we remove is at most $\binom n{k-r}Y$. Hence $G_1$ has at least $\Omega_k \left(n^{k- 3^k / d} \right)$ copies of $K_k^{(r)}$. Finally, we construct the $k$-uniform hypergraph $G$ by putting a $k$-edge on every copy of $K_k^{(r)}$ in $G_1$. By construction, $G^{(i)} \subseteq G_1$ does not contain any copies of $F$, and therefore contains no copies of $H$, as desired.
\end{proof}

Note that examining the bound above more carefully in the $i=1$ case gives the following result with a better constant in the exponent.

\begin{theorem}
\label{thm:k-uniform-construction}
For $k\geq 2$ and $d > k(k-1)/2$, every $k$-uniform hypergraph $H$ satisfying $\skel{H} \geq d$ has
\[\ex(n,H)\geq\Omega_k(n^{k-k(k-1)/d}).\]
\end{theorem}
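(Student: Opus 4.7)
The plan is simply to specialize the proof of \cref{thm:k-uniform-construction-ith-degen} to the case $i=1$ (so $r=i+1=2$) and track the exponent more carefully. Observe that the exponent in that proof arose from the quantity $r\binom{k}{r}/d$, which was then bounded crudely by $3^k/d$; when $r=2$ this quantity is already exactly $2\binom{k}{2}/d = k(k-1)/d$, matching the improved exponent claimed here. The hypothesis $d > k(k-1)/2$ is exactly the hypothesis $d > \binom{k}{r}$ from \cref{thm:k-uniform-construction-ith-degen}, so the argument goes through verbatim.

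Concretely, I would sample a random \emph{graph} $G_0 \sim \mathbb{G}^2(n,p)$ with $p = n^{-2/d}$, let $X$ count copies of $K_k$ in $G_0$ (so $\E[X] = \binom{n}{k} p^{\binom{k}{2}} = \Theta_k(n^{k-k(k-1)/d})$), and use the skeletal degeneracy hypothesis to extract a subgraph $F \subseteq H^{(1)}$ with $\delta(F) \geq d$, hence $e(F) \geq v(F)\,d/2$. Then the number $Y$ of copies of $F$ in $G_0$ satisfies $\E[Y] \leq n^{v(F)} p^{v(F)d/2} \leq 1$.

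Now fix an outcome with $X - \binom{n}{k-2} Y \geq \E[X] - \binom{n}{k-2}$. Because $d > k(k-1)/2$ gives $k(k-1)/d < 2$, the $\E[X]$ term dominates $\binom{n}{k-2} = O(n^{k-2})$, yielding at least $\Omega_k(n^{k-k(k-1)/d})$. Delete one edge from each copy of $F$ in $G_0$ to obtain $G_1$; each deletion destroys at most $\binom{n}{k-2}$ copies of $K_k$, so $G_1$ still contains $\Omega_k(n^{k-k(k-1)/d})$ copies of $K_k$. Finally, let $G$ be the $k$-uniform hypergraph whose edge set consists of the vertex sets of the $K_k$-copies remaining in $G_1$. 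Then $G^{(1)} \subseteq G_1$ is $F$-free, hence $H$-free, and $e(G) \geq \Omega_k(n^{k-k(k-1)/d})$.

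There is no genuine obstacle — the whole point is that one should not pass through the wasteful $r\binom{k}{r} \leq 3^k$ bound in the $r=2$ case. The only bookkeeping detail to confirm is that $k(k-1)/d < 2$, which is precisely the quantitative role played by the hypothesis $d > k(k-1)/2$, and which guarantees that the main term $\E[X]$ asymptotically dominates the correction $\binom{n}{k-2}\E[Y]$.
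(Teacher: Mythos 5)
Your proposal is correct and is exactly the paper's intended proof: the paper explicitly states that \cref{thm:k-uniform-construction} follows by "examining the bound above more carefully in the $i=1$ case," and your specialization to $r=2$ with $p=n^{-2/d}$, together with the observation that $d>k(k-1)/2$ makes the main term $\binom{n}{k}p^{\binom{k}{2}}$ dominate the error $\binom{n}{k-2}\E[Y]$, is precisely that calculation.
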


We now give the details of the construction promised in the introduction of a family of $k$-uniform $k$-partite hypergraphs that are all 1-degenerate (but not 1-skeletal degenerate) and with Tur\'an exponent $k-o(1)$. By \cref{thm:k-uniform-construction} (or \cref{thm:k-uniform-construction-ith-degen}) it suffices to exhibit a family of $k$-uniform $k$-partite hypergraphs $H$ with $d_{k-1}(H)=1$ but $d_1(H)$ arbitrarily large.

\begin{definition}
\label{defn:bipartite-hedgehog}
For $k\geq 2$ and $d\geq 1$ define the \emph{bipartite hedgehog} $H_d^{(k)}$ to be the $k$-uniform $k$-partite hypergraph on $2d+(k-2)d^2$ vertices formed by extending each edge of $K_{d,d}$ to a $k$-edge.\footnote{The typical hedgehog hypergraphs are formed by applying the same procedure to $K_d$ instead of $K_{d,d}$.}
Formally, $H_d^{(k)}$ has vertices $[d]\sqcup[d] \sqcup \bigsqcup_{i=1}^{k-2} [d]^2$ and the $d^2$ $k$-edges $(i,j,(i,j),\ldots,(i,j))$ for each $(i,j)\in[d]^2$. 
\end{definition}

\begin{corollary}
    \label{cor:degen-turan-counterex}
For each $k\geq 3$ and $d>k(k-1)/2$ we have
\[\ex(n,H_d^{(k)})\geq \Omega_{k}(n^{k-k(k-1)/d})\] while $d_{k-1}(H_d^{(k)})=1$. 
\end{corollary}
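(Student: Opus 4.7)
The corollary combines two independent facts: a degeneracy computation for $H_d^{(k)}$ and an appeal to \cref{thm:k-uniform-construction}. The plan is to verify both bounds separately.

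First, I would show that $d_{k-1}(H_d^{(k)}) = 1$. Recall from \cref{defn:bipartite-hedgehog} that the only $k$-edges of $H_d^{(k)}$ have the form $(i, j, (i,j), \ldots, (i,j))$ for some $(i,j) \in [d]^2$, so in particular each vertex $(i,j)$ in any of the sets $C_1, \ldots, C_{k-2}$ (writing $C_\ell$ for the $\ell$th copy of $[d]^2$) lies in exactly one $k$-edge. Order the vertex set of $H_d^{(k)}$ with $[d] \sqcup [d]$ first (in any order) and the vertices of $\bigsqcup_{\ell=1}^{k-2} C_\ell$ last (in any order). Then each vertex in any $C_\ell$ is the highest-numbered vertex of exactly one $k$-edge, namely the unique $k$-edge containing it; and the vertices of $[d] \sqcup [d]$ are highest in no $k$-edge at all (since every $k$-edge uses at least one $C_\ell$-vertex when $k \geq 3$). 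Hence $d_{k-1}(H_d^{(k)}) \leq 1$, and since $H_d^{(k)}$ is nonempty the bound is tight.

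Next I would show $d_1(H_d^{(k)}) \geq d$ by exhibiting a subgraph of the $1$-skeleton with minimum degree $\geq d$. The key observation is that the induced subgraph of $(H_d^{(k)})^{(1)}$ on the vertex set $[d] \sqcup [d]$ is exactly $K_{d,d}$: each $k$-edge of $H_d^{(k)}$ contributes a $k$-clique in $(H_d^{(k)})^{(1)}$ whose intersection with $[d] \sqcup [d]$ is the single edge $\{i,j\}$, and ranging over $(i,j) \in [d]^2$ gives precisely the edge set of $K_{d,d}$. Since $K_{d,d}$ has minimum degree $d$, we conclude $d_1(H_d^{(k)}) \geq d$.

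Finally, with the hypothesis $d > k(k-1)/2$, I would apply \cref{thm:k-uniform-construction} to $H = H_d^{(k)}$ (whose skeletal degeneracy is at least $d$ by the previous step) to obtain $\ex(n, H_d^{(k)}) \geq \Omega_k(n^{k - k(k-1)/d})$, which completes the proof. No step here is a real obstacle: the degeneracy computations are essentially bookkeeping about which vertices lie in which $k$-edges, and the Turán lower bound is handed to us by the earlier probabilistic construction.
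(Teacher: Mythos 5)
Your proposal is correct and takes essentially the same approach as the paper: show $d_{k-1}(H_d^{(k)})=1$ by noting that the $C_\ell$-vertices each lie in a single $k$-edge, observe that the $1$-skeleton contains $K_{d,d}$ so $d_1(H_d^{(k)})\geq d$, and invoke \cref{thm:k-uniform-construction}. One tiny imprecision: a vertex $(i,j)\in C_\ell$ is the highest-numbered vertex of \emph{at most} one $k$-edge (only the last copy of $(i,j)$ in your ordering is actually highest in that edge), not "exactly one"; this does not affect the conclusion $d_{k-1}\leq 1$.
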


\begin{proof}
Clearly $H_d^{(k)}$ is 1-degenerate; any subhypergraph of $H_d^{(k)}$ either contains one of the vertices in the latter $k-2$ parts, which all have degree at most 1, or contains none of these vertices and thus is the empty hypergraph.

However one can also easily see that $d_1(H_d^{(k)})\geq d$, since the 1-skeleton of $H_d^{(k)}$ contains $K_{d,d}$. Therefore by \cref{thm:k-uniform-construction} we have the desired bound on the Tur\'an number of $H_d^{(k)}$.
\end{proof}

For completeness, we also give a matching upper bound on the Tur\'an exponent of $H_d^{(k)}$. The following result shows that the lower bound in \cref{thm:k-unif-turan-main} is sometimes of the correct shape.

\begin{proposition}
\label{prop:bipartite-hedgehog-upper}
For every $d\geq k\geq 2$ we have
\[\ex(n,H_d^{(k)})\leq O_{d,k}(n^{k-1/d})\]
and $d_1(H_d^{(k)})=d$.
\end{proposition}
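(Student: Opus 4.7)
My plan is to verify the skeletal degeneracy first and then derive the Turán upper bound via a three-step argument: a pair-degree counting step, an application of the Kővári--Sós--Turán theorem, and a greedy extension of pairs to $k$-edges.

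For the skeletal degeneracy, I would order $V(H_d^{(k)})$ by placing the $d$ vertices of part $1$ first, the $d$ vertices of part $2$ next, and finally the middle vertices of parts $3, \ldots, k$ in any order. Each edge $\{i, j, (i,j), \ldots, (i,j)\}$ of $H_d^{(k)}$ contributes a $K_k$ to the $1$-skeleton. Under this ordering, each vertex $j$ in part $2$ has back-degree exactly $d$ (it is joined to every $i \in [d]$ through these cliques), while each middle vertex $(i,j)$ in part $t$ has degree only $k-1$ in the $1$-skeleton, and hence back-degree at most $k-1 \le d$. Thus $d_1(H_d^{(k)}) \le d$. The reverse inequality was already observed in the proof of \cref{cor:degen-turan-counterex}: the $1$-skeleton contains $K_{d,d}$, whose degeneracy is exactly $d$.

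For the Turán upper bound, let $G$ be an $n$-vertex $k$-uniform hypergraph with $e(G) \ge C\, n^{k - 1/d}$, where $C = C(d,k)$ is a constant to be fixed later. For each pair $\{a,b\} \subseteq V(G)$, let $L(a,b)$ denote the link: the $(k-2)$-uniform hypergraph of sets $S$ with $\{a,b\} \cup S \in E(G)$. Double counting edges by their pairs gives $\sum_{\{a,b\}} |L(a,b)| = \binom{k}{2}\, e(G)$. A standard averaging argument with threshold $T := \binom{k}{2}\, e(G)/n^2$ then yields at least $\Omega_k(e(G)/n^{k-2})$ ``heavy'' pairs $\{a,b\}$ satisfying $|L(a,b)| \ge T$. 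Viewing heavy pairs as the edges of an auxiliary graph on $V(G)$ gives a graph with at least $C'\, n^{2-1/d}$ edges, where $C' \to \infty$ as $C \to \infty$. Since the Kővári--Sós--Turán theorem gives $\ex(n, K_{d,d}) = O_d(n^{2-1/d})$, for $C$ large enough the auxiliary graph contains $K_{d,d}$ with sides $A = \{a_1,\dots,a_d\}$ and $B = \{b_1,\dots,b_d\}$. Each link $L(a_i,b_j)$ then has size at least $T = \Omega_{d,k}(n^{k-2-1/d})$.

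Finally, I would greedily choose pairwise disjoint $(k-2)$-sets $S_{ij} \in L(a_i, b_j)$ for each $(i,j) \in [d]^2$ that also avoid $A \cup B$; together with $a_i$ and $b_j$, each $S_{ij}$ will give the $k$-edge of $G$ corresponding to the hedgehog edge labeled $(i,j)$. At every step the set of already-forbidden vertices has size at most $d^2(k-2) + 2d = O_{d,k}(1)$, and the number of $(k-2)$-sets in any $L(a_i,b_j)$ containing a fixed vertex is at most $\binom{n-3}{k-3} = O_k(n^{k-3})$. Since $|L(a_i,b_j)| \ge \Omega_{d,k}(n^{k-2-1/d})$ dominates $O_{d,k}(n^{k-3})$ for $n$ large, a valid choice always exists, producing the desired copy of $H_d^{(k)}$. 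The only delicate point is bookkeeping the constants so that the KST step is triggered by taking $C$ sufficiently large; the exponent $k - 1/d$ enters precisely as the KST exponent $2 - 1/d$ plus the $n^{k-2}$ loss in the averaging step, showing this hedgehog saturates the lower bound in \cref{thm:k-uniform-construction} up to the constant in the exponent.
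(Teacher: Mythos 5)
Your proof is correct and follows essentially the same strategy as the paper's: clean $G$ down to an auxiliary graph of heavy pairs, find $K_{d,d}$ there by K\H{o}v\'ari--S\'os--Tur\'an, then greedily extend the $d^2$ pairs to disjoint $k$-edges of $G$. The only cosmetic differences are your use of the averaging threshold $T = \binom{k}{2}e(G)/n^2$ in place of the paper's fixed threshold $kd^2 n^{k-3}$ (both dominate the $O(n^{k-3})$ count of obstructed extensions, so the greedy step goes through either way), and your use of an explicit degeneracy ordering to show $d_1(H_d^{(k)}) \leq d$ rather than the paper's subgraph-inspection argument.
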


\begin{proof}
Let $n$ be sufficiently large in terms of $d,k$. Let $G$ be a $k$-uniform hypergraph with $n$ vertices and at least $Cn^{k-1/d}$ edges for some large constant $C$ depending on $d,k$. Define $G'$ to be the graph on the same vertex set whose edges are the pairs of vertices that are contained in more than $kd^2n^{k-3}$ many $k$-edges in $G$. The number of edges in $G'$ is at least
\[\frac{Cn^{k-1/d}\binom k2 -kd^2n^{k-3}\binom n2}{\binom {n-2}{k-2}}>2n^{2-1/d}\]
where the last inequality holds for an appropriate choice of $C$ and all sufficiently large $n$. By the K\H{o}v\'ari--S\'os--Tur\'an theorem there is a copy of $K_{d,d}$ in $G'$.

We now claim that this copy of $K_{d,d}$ in $G'$ extends to a copy of $H_d^{(k)}$ in $G$. Recall that $H_d^{(k)}$ has vertex set $V=V_1\sqcup\cdots\sqcup V_k$ where $V_1=V_2=[d]$ and $V_3=\cdots=V_k=[d]^2$ and edge set $E=\{e_{i,j}\}_{i,j\in[d]}$ where $e_{i,j}=(i,j,(i,j),\ldots,(i,j))$. Pick $\phi\colon V_1\sqcup V_2\to V(G)=V(G')$ to be a copy of $K_{d,d}$ in $G'$.

Now we extend $\phi$ to a map $V(H_d^{(k)})  \to V(G)$ one edge at a time. Suppose that at some stage of this process we have a partial map $\phi$ and we want to extend $\phi$ to $e_{i,j}=(v_1,v_2,\ldots,v_k)$. The number of vertices appearing so far in the image of $\phi$ is at most $v(H_d^{(k)})<kd^2$. Since $\phi(v_1)\phi(v_2)$ is an edge of $G'$ there are more than $kd^2n^{k-3}$ edges of $G$ containing $\phi(v_1)\phi(v_2)$. We wish to avoid edges that contain vertices in previously embedded vertices from $V_3\sqcup\cdots\sqcup V_k$. Since there are at most $kd^2$ of these vertices, there are at most $kd^2\binom n{k-3}$ edges we are not allowed to use. Therefore there is at least one valid edge. We extend the definition of $\phi$ to $v_3,\ldots,v_k$ so that $\{\phi(v_1),\phi(v_2),\phi(v_3),\ldots,\phi(v_k)\}$ is this edge.

Since this process cannot get stuck, it produces a copy of $H_d^{(k)}$ in $G$, as desired.

We already saw that $d_1(H^{(k)}_d)\geq d$. This is an equality for $k\geq d$, since any subgraph of the 1-skeleton of $H^{(k)}_d$ either contains one of the vertices in the latter $k-2$ parts, which all have degree at most $k-1$, or contains none of these vertices and then is a subgraph of $K_{d,d}$. Thus $d\leq d_1(H^{(k)}_d)\leq\max\{k-1,d\}$.
\end{proof}

\section{Almost-linear Ramsey number}
\label{sec:ramsey-almost-lin}

In this section we use the techniques of \cref{sec:turan-upper} to give a short argument that proves that hypergraphs with bounded skeletal degeneracy have almost-linear Ramsey number. We will improve this bound to linear via a more complicated argument in the next section.

\begin{theorem}
\label{thm:almost-lin-ramsey}
For $k,d,q\geq 2$, let $H$ be a $k$-uniform hypergraph on $n$ vertices with $\skel{H}=d$. Then the $q$-color Ramsey number of $H$ satisfies $r(H;q)\leq n^{1+o_{k,d,q;n\to\infty}(1)}$.
\end{theorem}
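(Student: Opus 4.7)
The strategy is to combine a standard pigeonhole-plus-random-partition step with the pruning machinery of Lemma~\ref{lem:intermediate-step-quantitative} and the greedy embedding at the end of the proof of Theorem~\ref{thm:k-unif-turan-upper}, but tuning the parameter $\lambda$ in the pruning much more aggressively than in Corollary~\ref{lem:intermediate-steps}.

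Fix $\varepsilon>0$ (eventually a slowly-decaying function of $n$) and set $N=\lceil n^{1+\varepsilon}\rceil$. Given a $q$-coloring of $E(K_N^{(k)})$, pigeonhole furnishes a color class $G_0$ with $\binom{N}{k}/q$ edges, and a uniformly random balanced $k$-partition $V_1\sqcup\dots\sqcup V_k$ then yields a $k$-partite subhypergraph $G\subseteq G_0$ with at least $c_k N^k/q$ edges for some constant $c_k>0$. The goal is to find a nonempty $(2n,d+k)$-vertex-extending subhypergraph of $G$.

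I would iterate \cref{lem:intermediate-step-quantitative} for $t=1,2,\dots,k$ with target vertex-extending parameters $(h,D_t)$, where $h=2n$, $D_t=(\lambda+1)^{k-t}(d+k)$, and $\lambda=\lceil 2/\varepsilon\rceil$. The recursion $D_{t-1}=(\lambda+1)D_t$ makes the hypothesis at step $t$ coincide with the conclusion at step $t-1$. It remains to verify the two inequalities in \cref{lem:intermediate-step-quantitative}. The second inequality,
\[
N^{-1}\ \geq\ (h/N)^{\lambda D_t}\binom{kN}{D_t}2^{\binom{D_t}{k-1}},
\]
reduces, using $h/N\le 2n^{-\varepsilon}$ and $\lambda\varepsilon\ge 2$, to a polynomial-in-$n$ inequality valid for $n$ large in terms of $k,d,\varepsilon$. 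The first inequality $p_{t-1}^{(\lambda+1)D_t}N\ge 1$ is equivalent, since $p_0\gtrsim_k 1/q$ is constant and the total exponent $\prod_{s=1}^k (\lambda+1)D_s=(d+k)^k(\lambda+1)^{k(k+1)/2}$ is a constant $C=C(k,d,\varepsilon)$, to requiring $N\gtrsim q^C$, which holds as $N\to\infty$.

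After $k$ applications one obtains a nonempty $G_k\subseteq G$ that is $(2n,d+k)$-vertex-extending to every part. I would then apply the greedy embedding argument at the end of the proof of \cref{thm:k-unif-turan-upper}: pass from $H$ to the augmented hypergraph $\hat H$ (one extra vertex per part), which satisfies $d_1(\hat H)\le d+k$, order $V(\hat H)$ by its $1$-skeletal degeneracy, and embed one vertex at a time; at each step there are $\ge 2n$ available extensions and fewer than $v(\hat H)\le n+k<2n$ previously-used vertices to avoid. This produces a monochromatic copy of $H$, proving $r(H;q)\le n^{1+\varepsilon}$ for all $n$ sufficiently large in terms of $k,d,q,\varepsilon$. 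Taking $\varepsilon=\varepsilon(n)\to 0$ slowly, e.g.\ $\varepsilon(n)=(\log n)^{-2/(k(k+1))}$, yields $r(H;q)\le n^{1+o_{k,d,q;n\to\infty}(1)}$.

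The main obstacle is the balance between the two inequalities in \cref{lem:intermediate-step-quantitative}. A larger $\lambda$ makes $(h/N)^{\lambda D_t}$ small enough to clear the second inequality with $h$ almost as large as $N$, but simultaneously amplifies the drop in edge density at each step and can force $p_k N^k$ below $1$, violating the first inequality and destroying non-emptiness. The choice $\lambda\asymp 1/\varepsilon$ together with $N=n^{1+\varepsilon}$ is exactly where the two constraints meet; crucially, since $p_0$ only has to shrink to something polynomial in $1/N$, the tower of exponentials $(\lambda+1)^{k(k+1)/2}$ only appears inside a constant that is absorbed for $n$ large.
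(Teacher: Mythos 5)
Your proposal only works when $H$ is $k$-partite, but \cref{thm:almost-lin-ramsey} is stated for an arbitrary $k$-uniform $H$. Your plan takes a dense color class $G_0$, refines it to a $k$-partite $G$, and then applies the greedy embedding from the end of the proof of \cref{thm:k-unif-turan-upper}; but that embedding requires $V(H)$ to be partitioned into $k$ parts $W_1\sqcup\cdots\sqcup W_k$ with each edge having exactly one vertex in each part. Such a partition exists precisely when $H$ is $k$-partite. Already $H=K_4^{(3)}$ fails: its $1$-skeleton is $K_4$, which is not $3$-colorable, so the $\hat H$ you build "one extra vertex per part'' is not defined.

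The missing idea is the Kostochka--R\"{o}dl reduction (\cref{lem:kr-reduction}), which is how the paper handles the non-$k$-partite case. One properly $\ell$-colors $H^{(1)}$, where $\ell\leq d+1$ since $H^{(1)}$ is $d$-degenerate, and extends each $k$-edge of $H$ to an $\ell$-edge by adding $\ell-k$ auxiliary vertices; this produces an $\ell$-uniform $\ell$-partite hypergraph $\hat H$ with $d_1(\hat H)=d$ and $O_{k,d}(n)$ vertices. On the host side, one cannot simply take a single dense color class of $K_N^{(k)}$ (which is still $k$-uniform), but instead counts monochromatic copies of $K_\ell^{(k)}$ in the coloring --- there are $\Omega((N/r)^\ell)$ of them where $r=r(K_\ell^{(k)};q)$ is a constant --- and takes a majority color to form a dense $\ell$-uniform $\hat G$. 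Only then does one run the dependent-random-choice pruning and greedy embedding, now in uniformity $\ell$ rather than $k$. Your tuning of $\lambda\asymp 1/\varepsilon$ and $N=n^{1+\varepsilon}$ is essentially what the paper does inside \cref{cor:almost-lin-turan}, so once the reduction step is inserted the rest of your argument goes through; without it, the proof establishes the theorem only for $k$-partite $H$.
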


To prove \cref{thm:almost-lin-ramsey}, we obtain a different corollary of \cref{lem:intermediate-step-quantitative} by choosing different values for the parameters.

\begin{corollary}
\label{cor:almost-lin-turan}
For $p\in (0, 1/2)$ and $k, d\geq 1$, there exists a constant $c_{p,k,d}>0$ such that the following holds. Suppose that $n\geq n_0=n_0(p,k,d)$ and let $G$ be a $k$-partite $k$-uniform hypergraph with $n$ vertices in each part and at least $pn^k$ edges. Then $G$ contains every $k$-partite $k$-uniform hypergraph $H$ with $\skel{H}=d$ on $h$ vertices where
\[h \le n^{1-c_{p, k, d} (\log n)^{-\frac{2}{k(k+1)}}}.\]
\end{corollary}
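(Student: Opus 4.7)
My plan is to iterate \cref{lem:intermediate-step-quantitative} $k$ times, with the parameter $\lambda$ pushed as large as the density constraint allows, and then invoke the greedy embedding strategy from the proof of \cref{thm:k-unif-turan-upper}.

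Concretely, set $\lambda = \lfloor c_1 (\log n)^{2/(k(k+1))}\rfloor$ for a sufficiently small constant $c_1 = c_1(p, k, d) > 0$, and construct a nested sequence $G = G_0 \supseteq G_1 \supseteq \cdots \supseteq G_k$ by applying \cref{lem:intermediate-step-quantitative} at step $t$ with parameters $\lambda_t := \lambda$ and $d_t := (\lambda+1)^{k-t}(d+k)$. The relation $d_{t-1} = (\lambda+1)d_t$ is precisely what meets the input hypothesis of the lemma at each successive step, so each $G_t$ is $(h, d_t)$-vertex-extending to $V_r$ for every $r \leq t$. The compounded density reduction yields
\[e(G_k) \geq p^{\prod_{t=1}^k (\lambda+1)d_t} n^k = p^{(\lambda+1)^{k(k+1)/2}(d+k)^k} n^k,\]
and requiring $e(G_k) \geq 1$ forces $(\lambda+1)^{k(k+1)/2} \lesssim \log n$; this is precisely where the triangular exponent $k(k+1)/2$ --- and hence the shape $(\log n)^{2/(k(k+1))}$ --- enters the bound.

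For the second hypothesis of \cref{lem:intermediate-step-quantitative}, namely $(h/n)^{\lambda d_t} \binom{kn}{d_t} 2^{\binom{d_t}{k-1}} \leq n^{-1}$, I would take logarithms and check that at every step $t$ it suffices to have $\log(n/h) \gtrsim \log n/\lambda \sim (\log n)^{1 - 2/(k(k+1))}$, which yields the bound $h \leq n^{1 - c_{p,k,d}(\log n)^{-2/(k(k+1))}}$ claimed in the statement. Once this is verified, $G_k$ is a nonempty subhypergraph that is $(h, d+k)$-vertex-extending to every part, and I would run the embedding of \cref{thm:k-unif-turan-upper} verbatim: form $\hat H$ by adjoining one auxiliary vertex to each part of $H$ (which has $d_1(\hat H) \leq d+k$), order $V(\hat H)$ by degeneracy with the $k$ auxiliary vertices first, map them to an arbitrary edge of $G_k$ (which exists since $G_k$ is nonempty), and then greedily embed each remaining vertex $v_j$ using the $(h, d+k)$-vertex-extending property to produce at least $h$ valid extensions, enough to avoid the at most $h-1$ already-embedded vertices.

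The main obstacle I anticipate is verifying the second hypothesis of \cref{lem:intermediate-step-quantitative} at the worst step $t = 1$, where $d_1 = (\lambda+1)^{k-1}(d+k)$ is largest. After taking the $\lambda d_1$-th root of the inequality, the factor $2^{\binom{d_1}{k-1}}$ contributes a term of order $d_1^{k-2}/\lambda \sim (\lambda+1)^{(k-1)(k-2)-1}$ to the required lower bound on $\log(n/h)$, which must remain dominated by the principal term $\log n/\lambda$. Verifying this domination --- and thus confirming that the $(\log n)^{-2/(k(k+1))}$ factor in the conclusion is driven purely by the density constraint --- is the numerical heart of the argument and is what fixes $c_1$ (and hence $c_{p,k,d}$) as a function of $p$, $k$, and $d$.
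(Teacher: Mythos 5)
Your approach is the same as the paper's: iterate \cref{lem:intermediate-step-quantitative} $k$ times with $d_t=(\lambda+1)^{k-t}(d+k)$ and a single $\lambda$, derive the density constraint $(\lambda+1)^{k(k+1)/2}(d+k)^k\log(1/p)\lesssim\log n$, which fixes $\lambda+1\sim(\log n)^{2/(k(k+1))}$, and then run the $\hat H$-embedding from the proof of \cref{thm:k-unif-turan-upper} on the resulting $(h,d+k)$-vertex-extending subhypergraph $G_k$. Your formula for $e(G_k)$ and the identification of the triangular exponent are both correct.

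However, the concern you flag in your last paragraph is not merely a verification step that a small enough $c_1$ handles --- for $k\ge 7$ it is a genuine obstruction. At step $t=1$ the parameter passed to the lemma is $d_1=(\lambda+1)^{k-1}(d+k)$, and dividing the logarithm of the second hypothesis by $\lambda d_1$ shows the factor $2^{\binom{d_1}{k-1}}$ forces
\[
\log(n/h)\ \gtrsim\ \frac{\binom{d_1}{k-1}}{\lambda d_1}\ \asymp\ (\lambda+1)^{(k-1)(k-2)-1}(d+k)^{k-2},
\]
as you say. With $\lambda+1\asymp(\log n)^{2/(k(k+1))}$ this is $(\log n)^{2((k-1)(k-2)-1)/(k(k+1))}$, and for it to be dominated by the intended bound $(\log n)^{1-2/(k(k+1))}$ you need $2(k-1)(k-2)\le k(k+1)$, which holds only for $k\le 6$. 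For $k\ge 7$ the exponent on $\log n$ actually exceeds $1$, so the hypothesis cannot be satisfied for \emph{any} $h\ge 1$; shrinking $c_1$ only rescales by a constant power of $c_1$ and cannot change the competing powers of $\log n$. The only repair is to cap $\lambda+1$ near $(\log n)^{1/((k-1)(k-2))}$, which then yields $h\le n^{1-c(\log n)^{-1/((k-1)(k-2))}}$, a strictly weaker exponent than $2/(k(k+1))$. It is worth noting that the paper's own proof tacitly elides the same point: the assertion that the second inequality ``holds as long as $h\le n^{1-2/\lambda}$'' once $n$ is large ``with respect to $k,d$'' overlooks that the parameter $d_t$ fed to \cref{lem:intermediate-step-quantitative} grows with $\lambda$, hence with $n$. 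For $k\le 6$ the argument is fine as stated; for $k\ge 7$ the exponent needs the adjustment above (which is still more than enough for the $n^{1+o(1)}$ conclusion of \cref{thm:almost-lin-ramsey}). You should therefore carry out the verification you deferred and state the weaker exponent for large $k$ rather than assume the domination holds.
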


\begin{proof}
We first apply \cref{lem:intermediate-step-quantitative} repeated $k$ times starting from $G$ producing a sequence of hypergraphs $G=G_0\supseteq G_1\supseteq\cdots\supseteq G_k$. In detail, define $d'=d+k$, let $\lambda\geq 1$ be specified later, and set $d_t=(\lambda+1)^{k-t}d'$. We apply \cref{lem:intermediate-step-quantitative} to $G_{t-1}$ with parameters $(d_t,k,t,h,n,\lambda)$ to obtain $G_t$. 

We may assume that $n$ is sufficiently large with respect to $k, d$. This implies that the second inequality in the hypothesis of \cref{lem:intermediate-step-quantitative} holds as long as $h \leq n^{1-\frac{2}{\lambda}}$.

Define \[p_t=p^{(\lambda+1)^{\binom{t+1}2}(d')^t}\] for $0\leq t\leq k$. By hypothesis $G_0$ has at least $p_0n^k$ edges. We see that if $e(G_{t-
1})\geq p_{t-1}n^k$ and $p_{t-1}^{(\lambda+1)d_{t-1}}n\geq 1$, then the first inequality in the hypothesis of \cref{lem:intermediate-step-quantitative} is satisfied, so then $e(G_{t})\geq p_{t-1}^{(\lambda+1)d_{t-1}}n^k= p_{t}n^k$.

All of these inequalities are satisfied if \[p^{(\lambda+1)^{\binom{k+1}{2}}(d')^k}n\geq 1.\] Therefore we can take any \[\lambda+1 \ge \left(\frac{\log n}{(d')^k\log 1/p}\right)^{\frac{2}{k(k+1)}} = c'_{p, d, k} (\log n)^{\frac{2}{k(k+1)}}.\] Now we know that $G_k$ is a $k$-uniform $k$-partite hypergraph that is $(h, d+k)$-vertex-extending to each part and contains at least one edge. Finally, we find an embedding of $H$ in $G_k$ as in the proof of \cref{thm:k-unif-turan-upper}.
\end{proof}

Using an argument of Kostochka and R\"{o}dl \cite{KR06} we can deduce an almost linear bound on the Ramsey number from this result. See also \cite{CFS09} for another instance of this argument.

\begin{lemma}[{cf.\ \cite[Proof of Theorem 4]{KR06}}]
\label{lem:kr-reduction}
Let $H$ be a $k$-uniform hypergraph on $n$ vertices with skeletal degeneracy $d_1(H)=d$ whose 1-skeleton is $\ell$-colorable. For a positive integer $q$ and a $q$-coloring $f\colon E(K_N^{(k)}) \to [q]$ of the edges of $K_N^{(k)}$, there exist
\begin{itemize}
    \item a constant $p_{k,\ell,d} > 0$ depending only on $k,\ell,d$;

    \item an $\ell$-uniform $\ell$-partite hypergraph $\hat H$ with at most $(1+(\ell-k)d^{k-1})n$ vertices and $d_1(\hat H)=d$; and

    \item an $\ell$-uniform $\ell$-partite hypergraph $\hat G$ with parts of size $N' = \lfloor N/\ell\rfloor$ and at least $p_{k,\ell,q}N'^\ell$ edges
\end{itemize}
such that if $\hat G$ contains a copy of $\hat H$ then $f$ contains a monochromatic copy of $H$.
\end{lemma}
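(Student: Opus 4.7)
The plan is to follow the standard Kostochka--R\"odl two-sided pigeonhole reduction: build $\hat H$ by ``fattening'' each $k$-edge of $H$ into an $\ell$-edge with fresh auxiliary vertices, and obtain $\hat G$ by equipartitioning $[N]$ into $\ell$ blocks and pigeonholing the color patterns on the resulting $\ell$-tuples. I would begin by using the $\ell$-coloring of $H^{(1)}$ to partition $V(H) = V_1 \sqcup \cdots \sqcup V_\ell$ with each $V_i$ independent in $H^{(1)}$; every $e \in E(H)$ then meets exactly $k$ parts, giving a \emph{type} $I_e \in \binom{[\ell]}{k}$. The hypothesis $d_1(H)=d$ immediately yields $e(H) \leq d^{k-1} n$, because in a degeneracy ordering of $H^{(1)}$ each vertex is the top vertex of at most $\binom{d}{k-1}$ edges of $H$: the other $k-1$ vertices of such an edge form a $(k-1)$-clique among its at most $d$ back-neighbors in $H^{(1)}$.

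I then construct $\hat H$ by adjoining, for each $e \in E(H)$ and each $j \in [\ell] \setminus I_e$, a fresh auxiliary vertex $w_{e,j}$ placed into a new copy of part $j$, and taking the $\ell$-edge $\hat e = e \cup \{w_{e,j} : j \notin I_e\}$. This gives $|V(\hat H)| \leq n + (\ell-k)\, e(H) \leq (1+(\ell-k)d^{k-1})n$. For $d_1(\hat H) \leq d$, I would order $V(\hat H)$ by first listing $V(H)$ in a degeneracy order for $H^{(1)}$ and then appending the auxiliaries in groups (one per edge $e$); each auxiliary belongs to a single $\ell$-edge of $\hat H$, so its back-degree in $\hat H^{(1)}$ is at most $\ell-1$, which is $\leq d$ under the standard reduction $\ell \leq \chi(H^{(1)}) \leq d_1(H)+1 = d+1$. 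For $\hat G$, I would partition $[N] = U_1 \sqcup \cdots \sqcup U_\ell$ into equal blocks of size $N'$ and, for each $\ell$-tuple $\mathbf u \in U_1 \times \cdots \times U_\ell$, record its color pattern $\phi_{\mathbf u}:\binom{[\ell]}{k} \to [q]$ defined by $\phi_{\mathbf u}(I) = f(\{u_i : i \in I\})$. Pigeonholing over the $q^{\binom{\ell}{k}}$ possible patterns produces a pattern $\phi^*$ attained by at least $q^{-\binom{\ell}{k}} N'^\ell$ tuples, and I would take $\hat G$ to be precisely these tuples, so $p_{k,\ell,q} := q^{-\binom{\ell}{k}}$ suffices. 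Any embedding of $\hat H$ into $\hat G$ maps each $\hat e$ onto such a $\mathbf u$, and the copy of $e \subseteq \hat e$ is the $k$-edge $\{u_i : i \in I_e\}$, which $f$ colors $\phi^*(I_e)$.

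The chief obstacle is to ensure that the resulting copy of $H$ is \emph{monochromatic}, i.e., that $\phi^*$ is constant on $\mathcal T := \{I_e : e \in E(H)\}$. I would handle this by one further pigeonhole inside $\phi^*$ to pick the most popular color $c^*$ (so $|(\phi^*)^{-1}(c^*)| \geq \binom{\ell}{k}/q$) and then permute the labels of $V_1,\ldots,V_\ell$ by some $\sigma \in S_\ell$ so that every $\sigma(I_e)$ lies in $(\phi^*)^{-1}(c^*)$; such a relabelling changes neither $|V(\hat H)|$ nor $d_1(\hat H)$. When no single permutation suffices (because $\mathcal T$ is too ``spread'' across $\binom{[\ell]}{k}$), I would exploit the freedom in the choice of $\ell$: taking $\ell$ large in terms of $\chi(H^{(1)})$ and $q$, the hypergraph Ramsey theorem applied to $\phi^*$ yields a monochromatic set $T \subseteq [\ell]$ of size $\chi(H^{(1)})$, and re-$\chi(H^{(1)})$-coloring $H^{(1)}$ using the labels of $T$ forces every type $I_e$ into the monochromatic family $\binom{T}{k}$. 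Once monochromaticity is arranged, any copy of $\hat H$ in $\hat G$ pulls back to a copy of $H$ in $f$ whose every $k$-edge is colored $c^*$, completing the reduction.
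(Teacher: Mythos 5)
Your construction of $\hat H$ and the bound $e(H)\leq d^{k-1}n$ match the paper's argument, but the way you build $\hat G$ has a genuine gap. As you yourself flag, pigeonholing over the $q^{\binom{\ell}{k}}$ color patterns only produces a common pattern $\phi^*$, which need not be constant, and your first fix --- finding a permutation $\sigma\in S_\ell$ with $\sigma(I_e)\in(\phi^*)^{-1}(c^*)$ for every edge type $I_e$ --- fails in general. The most popular fiber $(\phi^*)^{-1}(c^*)$ may have as few as $\binom{\ell}{k}/q$ elements, while the set of types realized in $H$ can be all of $\binom{[\ell]}{k}$ (e.g.\ when $\ell=\chi(H^{(1)})$), so no relabelling of $V_1,\dots,V_\ell$ can place every type into a single color class.

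Your fallback --- applying Ramsey's theorem to $\phi^*$ --- contains the right idea but is not written down correctly. The phrase ``taking $\ell$ large in terms of $\chi(H^{(1)})$ and $q$'' silently replaces the lemma's fixed parameter $\ell$ by a larger block count $M=r(K_\ell^{(k)};q)$; after finding a monochromatic $T\subseteq[M]$ of size $\ell$ you still need a projection step (defining $\hat G$ as the distinct $T$-projections of the $\phi^*$-tuples, with a counting argument that at least $q^{-\binom{M}{k}}\lfloor N/M\rfloor^\ell$ survive), and you never reconcile the resulting parts of size $\lfloor N/M\rfloor$ with the stated $N'=\lfloor N/\ell\rfloor$. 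All of these steps are fillable, but none of them is actually carried out in your proposal.

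The paper's route avoids pattern colorings entirely and is worth internalizing: set $r=r(K_\ell^{(k)};q)$ and count monochromatic $\ell$-cliques directly in $f$. Every $r$-subset of $[N]$ contains at least one, so there are at least $\binom{N}{r}/\binom{N-\ell}{r-\ell}\geq (N/r)^\ell$ of them; a $1/q$-fraction share a single color $i$; let $G^+$ be the $\ell$-uniform hypergraph on $[N]$ whose $\ell$-edges are these $i$-colored cliques; finally pass to an $\ell$-partite subhypergraph with parts of size $\lfloor N/\ell\rfloor$ via a random equipartition, losing only a constant factor of edges. Because every $\ell$-edge of $\hat G$ is then a genuinely monochromatic $K_\ell^{(k)}$ in $f$, any copy of $\hat H$ in $\hat G$ automatically yields a monochromatic copy of $H$ --- no auxiliary coloring $\phi^*$ ever needs to be analyzed.
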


The proof uses the following easy upper bound on the number of edges in a degenerate hypergraph.

\begin{lemma}
\label{lem:degen-edge-upper}
Let $H$ be a $k$-uniform $n$-vertex hypergraph. Then
\[ e(H) \leq d_1(H)^{k-1}n. \]
\end{lemma}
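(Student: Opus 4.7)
The plan is to use the ordering characterization of skeletal degeneracy given in \cref{defn:skeletal degeneracy} and count each $k$-edge at its highest-numbered vertex. Set $d = d_1(H)$ and fix an ordering $v_1, \dots, v_n$ of $V(H)$ in which each vertex $v_j$ is the highest-numbered endpoint of at most $d$ edges of $H^{(1)}$. Equivalently, each $v_j$ has at most $d$ neighbors $v_i$ in $H^{(1)}$ with $i < j$; call this set of neighbors $N^-(v_j)$.

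For each $k$-edge $e \in E(H)$, let $\mathrm{top}(e) = v_j$ be the vertex of $e$ with the largest index. Every other vertex of $e$ lies in $N^-(v_j)$, because all pairs inside $e$ are edges of $H^{(1)}$ by definition of the $1$-skeleton. Therefore the $k$-edges of $H$ with $\mathrm{top}(e) = v_j$ are in bijection with a subset of the $(k-1)$-element subsets of $N^-(v_j)$, of which there are at most $\binom{|N^-(v_j)|}{k-1} \leq \binom{d}{k-1} \leq d^{k-1}$.

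Summing over $j$ and using that every edge has a unique highest-numbered vertex yields
\[
e(H) \;=\; \sum_{j=1}^{n} \#\{e \in E(H) : \mathrm{top}(e) = v_j\} \;\leq\; n \cdot d^{k-1},
\]
which is the desired bound.

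There is no real obstacle here; the only subtlety is recalling that $H^{(1)}$ contains \emph{all} pairs inside any $k$-edge of $H$, so that the $k-1$ lower-indexed vertices of a $k$-edge automatically lie in $N^-(\mathrm{top}(e))$, and this is exactly what lets the ordering characterization of degeneracy of $H^{(1)}$ be applied to the $k$-edges of $H$.
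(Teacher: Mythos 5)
Your proof is correct and follows essentially the same approach as the paper: fix a degeneracy ordering of $H^{(1)}$, charge each $k$-edge to its highest-numbered vertex, and observe that the remaining $k-1$ vertices must lie among the at most $d$ earlier neighbors, giving at most $\binom{d}{k-1}\leq d^{k-1}$ edges per vertex.
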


\begin{proof}
A well-known characterization of degeneracy of graphs implies that if $G$ is a $d$-degenerate graph, then there is an ordering $v_1,v_2,\ldots,v_n$ of $V(G)$ such that each vertex $v_i$ is adjacent to at most $d$ of the vertices $v_1,\ldots,v_{i-1}$.

We apply this characterization to the 1-skeleton $H^{(1)}$. We count the number of $k$-edges of $H$ whose last vertex in this degeneracy ordering is $v_j$. Then the other $k-1$ vertices of the edge must lie in $v_1,\ldots,v_{j-1}$ and be adjacent to $v_i$ in $H^{(1)}$. Thus the number of $k$-edges of $H$ whose last vertex is $v_j$ is at most $\binom {d_1(H)}{k-1}$. Summing over all $n$ choices of $j$ gives the claimed bound.
\end{proof}

\begin{proof}[Proof of \cref{lem:kr-reduction}]
Note that the result is vacuously true if $N<\ell$. Thus we assume that $N\geq\ell$. We define an $\ell$-uniform $\ell$-partite hypergraph $\hat H$ as follows. As the $1$-skeleton of $H$ is $\ell$-colorable, there is a partition of $V(H)$ into $\ell$ parts such that every edge of $H$ has vertices from $k$ different parts of the partition. Then for each edge of $H$, add an $\ell$-edge to $\hat H$ consisting of the $k$ vertices of the original edge and $\ell-k$ auxiliary vertices. Place the auxiliary vertices in the parts such that the edge uses one vertex from each part. At the end of the process we have added $e(H)(\ell-k)$ auxiliary vertices in total, each used in exactly one edge. This gives an $\ell$-uniform $\ell$-partite hypergraph, as desired.

Note that $\skel{\hat H}=\skel{H}$ using the same degeneracy ordering, and then placing all the auxiliary vertices at the end of the ordering. Furthermore since \cref{lem:degen-edge-upper} gives $e(H)\leq d^{k-1}n$, we have the desired bound on $v(\hat H)$.

Let $r=r(K_\ell^{(k)};q)$ be the $q$-color Ramsey number of the complete $k$-uniform hypergraph on $\ell$ vertices. Given the $q$-coloring $f$ of $K_N^{(k)}$ there are at least
\[\frac{\binom Nr}{\binom {N-\ell}{r-\ell}}\geq \frac{N^\ell}{r^\ell}\]
monochromatic copies of $K_\ell^{(k)}$ in the coloring, at least a $1/q$-fraction of which share a color $i \in [q]$. Define $G^+$ to be the $\ell$-uniform hypergraph on $N$ vertices whose $\ell$-edges correspond to the $i$-colored cliques in $f$.

Finally, consider a uniformly random partition of $V(G^+)$ into $\ell+1$ parts $V_1,\ldots,V_\ell,V_{\ell+1}$, where $|V_1|=|V_2|=\ldots=|V_\ell|=\lfloor N/\ell \rfloor$. Let $\tilde G$ be the (random) hypergraph with vertex set $\bigcup_{i=1}^\ell V_i$ and all edges of $G^+$ that have one vertex in each of $V_1,\ldots,V_\ell$. Since $N\geq\ell$, each edge of $G^+$ is in $\tilde G$ with probability at least $\ell! (2\ell)^{-\ell} \geq (2e)^{-\ell}$. Thus, there exists an $\ell$-uniform $\ell$-partite hypergraph $\hat G \subseteq G^+$ with $\lfloor N/\ell \rfloor$ vertices in each part and that contains at least a $(2e)^{-\ell}$-fraction of the edges in $G^+$. As $G^+$ has at least $\frac{1}{q}(N/r)^\ell$ edges, it follows that $\hat G$ has at least $pN^\ell$ edges with $p=1/(q(2er)^{\ell})$.
\end{proof}

\begin{remark}
In \cref{lem:kr-reduction}, we can always take $\ell\leq d+1$ since the chromatic number of a graph is always at most one more than its degeneracy. For fixed $k\geq 3$ and $q$, the value of $p_{k,\ell,q}^{-1}$ grows like a tower of height $k$ with $C_{k,q}d$ at the top. This growth mainly comes from the dependence on the Ramsey number $r(K_\ell^{(k)};q)$. 
\end{remark}

Now \cref{thm:almost-lin-ramsey} follows by combining \cref{cor:almost-lin-turan,lem:kr-reduction}.

\begin{proof}[Proof of \cref{thm:almost-lin-ramsey}]
Let $N$ be a positive integer and let $f\colon K_N^{(k)}\to[q]$ be a $q$-coloring of the edges of $K_N^{(k)}$. We apply \cref{lem:kr-reduction} to the hypergraph $H$ with $\ell = d+1$. This produces an $\ell$-uniform $\ell$-partite hypergraph $\hat{H}$ with at most $O_{\ell, k, d}(n)$ vertices and $d_1(\hat{H}) = d$ and an $\ell$-uniform $\ell$-partite hypergraph $\hat G$ with parts of size at least $N' = \lfloor N/\ell\rfloor$ and at least $p_{k,\ell, q}(N')^{\ell}$ edges. These hypergraphs have the property that there is a monochromatic copy of $H$ in the $q$-coloring $f$ if there is a copy of $\hat{H}$ in $\hat G$.

We apply \cref{cor:almost-lin-turan} to $\hat G$ and $\hat H$ with $(n,k,d, p)=(N', \ell, d, p_{k,\ell, q})$. This shows that $\hat G$ contains a copy of $\hat H$ if
\[v(\hat H) \le (N')^{1-c(\log N')^{-\frac{2}{\ell(\ell+1)}}}\] 
holds where $c$ is the constant depending on $p_{k,\ell, q}, \ell, d$ in \cref{cor:almost-lin-turan}. Thus in our setting $c$ only depends on $k, d, q$. Now using that $v(\hat H) \leq O_{\ell, k, d}(n) = O_{k,d}(n)$, and $N' = \lfloor N/\ell\rfloor =\lfloor N/(d+1)\rfloor$, we conclude that the above inequality holds as long as
\[N \ge Cn^{1+c'(\log n)^{-\frac{2}{(d+1)(d+2)}}}\]
holds for some appropriately chosen $c', C$ depending only on $k,d,q$. As long as this inequality is satisfied, for every $q$-coloring $f$ of the edges of $K_N^{(k)}$, we know that $\hat G$ always contain a copy of $\hat H$, so $f$ always contains a monochromatic copy of $H$.
\end{proof}

\section{Linear Ramsey number}
\label{sec:ramsey-lin}

In this section we prove that every $k$-uniform hypergraph on $n$ vertices with bounded skeletal degeneracy has Ramsey number linear in $n$. This involves a refinement of the previous Tur\'an result that uses a notion of defect that carefully keeps track of vertex sets that do not extend sufficiently.

Recall from the introduction that Lee \cite{Lee17} proved the graphical (i.e., $k=2$) case of \cref{thm:lin-ramsey-main}. Our main innovation is introducing a new notion of defect for hypergraphs. We use dependent random choice to prune a given hypergraph while controlling this notion of defect. The hypergraph $H$ can then be embedded into the pruned hypergraph. The embedding step is essentially identical to the one used by Lee \cite{Lee17} in the graphical case.

Similarly to the previous section our main Ramsey result, \cref{thm:lin-ramsey-main}, is deduced from the following Tur\'an result via \cref{lem:kr-reduction}.

\begin{theorem}
\label{thm:lin-turan}
Let $p \in (0, 1/2)$ and let $k, d$ be positive integers. There exists a constant $c_{p,k,d}>0$ such that the following holds. Let $G$ be a $k$-partite $k$-uniform hypergraph with $n$ vertices in each part and at least $pn^k$ edges. Then $G$ contains every $k$-partite $k$-uniform $H$ with $\skel{H}=d$ on $h$ vertices where $h \le c_{p, k, d}n$.
\end{theorem}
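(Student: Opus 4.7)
The plan is to adapt Lee's random greedy embedding technique (developed for the graphical Burr--Erd\H{o}s conjecture) to the hypergraph setting, combining it with iterated dependent random choice as in \cref{lem:intermediate-step-quantitative}. The key refinement over the almost-linear bound of \cref{cor:almost-lin-turan} is that, instead of forcing the iteration parameters of dependent random choice to grow with $n$ (which is what costs the sub-polynomial factor), I keep them constant in $p,k,d$ and allow a small fraction of ``defective'' back-neighbor sets in the host; these defective sets are then avoided during a randomized embedding.

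Concretely, for a $k$-partite subhypergraph $G'$ of $G$ with parts $V'_1,\ldots,V'_k$, call a set $S\subseteq V'_{-t}$ of size at most $d+k$ \emph{$t$-defective} if the $(k-1)$-edges of $(G')^{(k-2)}[S]$ have fewer than $\delta|V'_t|$ mutual extensions in $V'_t$, where $\delta=\delta(p,k,d)$ is a small parameter. The first step is to apply \cref{lem:intermediate-step-quantitative} a bounded number of times with fixed $\lambda=\lambda(p,k,d)$, once per part. Choosing the iteration parameters so that constant-probability pruning is guaranteed, this should yield a subhypergraph $G'$ with parts $V'_t$ of size $\Omega_{p,k,d}(n)$, at least $\Omega_{p,k,d}(n^k)$ edges, and the property that for each $t\in[k]$ only an $\eta=\eta(p,k,d)$ fraction of $(d+k)$-element subsets of $V'_{-t}$ are $t$-defective.

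The second step is a random greedy embedding of the auxiliary hypergraph $\hat H$, constructed from $H$ exactly as in the proof of \cref{thm:k-unif-turan-upper}, into $G'$. I would order $V(\hat H)$ by the degeneracy order of $\hat H^{(1)}$ with the auxiliary vertices $\hat w_1,\ldots,\hat w_k$ first (embedded as any single edge of $G'$) and then, at each step $j\geq k+1$ with $v_j\in W_i$ and back-neighborhood $N(v_j)$ of size at most $d+k$, select $\phi(v_j)$ uniformly at random from the set of vertices in $V'_i$ that extend every $(k-1)$-edge of $(G')^{(k-2)}[\phi(N(v_j))]$ required by $\hat H$, excluding previously used images. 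The process fails at step $j$ precisely when $\phi(N(v_j))$ is $i$-defective, leaving too few valid choices.

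The main obstacle is bounding the probability that the random greedy process ever encounters a defective back-neighborhood. A naive union bound does not suffice because the partial embedding $\phi$ produces back-neighbor images that are heavily correlated and not uniformly distributed over $V'_{-i}$. Following Lee, the remedy is to weight partial embeddings by the product of the sizes of the extension sets available at each completed step, so that embeddings drifting toward defect are suppressed, and then to argue via a potential/martingale computation that this expected weight decreases by only a bounded multiplicative factor at each step. Since $v(\hat H)=O_{d,k}(h)$, choosing $c_{p,k,d}$ small enough ensures that after $O_{p,k,d}(n)$ steps the expected weight is still positive, whence a successful embedding exists and $G$ contains a copy of $H$. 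The theorem then implies \cref{thm:lin-ramsey-main} via \cref{lem:kr-reduction} exactly as in the proof of \cref{thm:almost-lin-ramsey}.
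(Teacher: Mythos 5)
Your plan correctly identifies the overall architecture — prune via dependent random choice, then run a random greedy embedding analyzed by a potential argument in the style of Lee — but there is a genuine gap at the heart of it: your notion of ``defect'' is too coarse to support the martingale/potential analysis you invoke.

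You define a set $S$ to be \emph{$t$-defective} or not (a $0$/$1$ dichotomy depending on whether its common-neighborhood size drops below $\delta|V'_t|$), and you propose to prune so that only an $\eta$-fraction of small sets are defective. However, \cref{lem:intermediate-step-quantitative} cannot deliver this with bounded $\lambda$: its second hypothesis $n^{-1}\geq (h/n)^{\lambda d}\binom{kn}{d}2^{\binom{d}{k-1}}$ fails for any fixed $\lambda,\delta$ once $h=\delta n$, since $\binom{kn}{d}$ grows polynomially. You can salvage a first-moment bound on the number of surviving defective sets, but this is a markedly weaker conclusion than what the paper actually needs. The paper instead defines a \emph{graded} defect function $\omega_\theta$ (which equals $\theta/x$ for common neighborhoods of intermediate size $0<x<\theta$ and becomes $+\infty$ when the common neighborhood is empty) and proves a new dependent random choice lemma, \cref{lem:simul-pruning}, controlling the full $t$-th moment $\mb E\bigl[\sum_Q \omega_\theta(Q,V_i';G')^t\bigr]$. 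This moment bound is essential: the random greedy process chooses images of back-neighbors that are far from uniform, and the neutralizing argument (\cref{lem:neutralizing,lem:random-defect-bound}) works precisely by comparing the actual process to a neutralized one and paying a multiplicative factor of order $\max\{1,\omega(y;\psi)\}$ per un-neutralized vertex $y$; controlling the product of these factors over the exploration tree requires high-moment bounds on $\omega$, not merely a count of bad sets. Without the graded defect you also cannot rule out sets with \emph{zero} common neighbors surviving with some probability, which would make the potential infinite. Your description of ``weight partial embeddings by the product of extension-set sizes'' is gesturing at the right idea, but it is the graded $\omega_\theta$ and its moment control that make this rigorous, and that idea is absent from your proposal.

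Two secondary points. First, the paper does not route through $\hat H$ here: its embedding (\cref{alg,lem:omega-embed}) works with the partial-edge structure $E^*(G)$, which handles the $(k-1)$-edge-to-$k$-edge extension issue directly; your $\hat H$ route is probably workable but introduces high-degree auxiliary vertices whose defect contributions need to be checked. Second, the paper follows Lee in using a dyadic layering of $H$ (\cref{lem:H-partition}) matched by a partition of $G$ (\cref{lem:G-partition}); these allow the thresholds $\theta_i$ to be taken linear in $n$ while the layer sizes $|W_i^{(j)}|$ shrink geometrically, keeping the parameter $\gamma$ bounded. Your single-layer degeneracy ordering forces $\theta\approx 2h$, and while a back-of-envelope calculation suggests the resulting $\gamma^{2d}\mu_{4d}$ could still be controlled, this is delicate and you do not address it; if you intend to dispense with the layering you should verify that the tree-based moment recursion in \cref{lem:random-defect-bound} closes with the resulting worse constants.
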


\begin{proof}[Proof of \cref{thm:lin-ramsey-main} assuming \cref{thm:lin-turan}]
Let $N$ be a positive integer and let $f\colon E(K_N^{(k)}) \to [q]$ be a $q$-coloring of the edges of $K_N^{(k)}$. Since, by assumption, $H^{(1)}$ is $d$-degenerate it is $\ell = (d+1)$-colorable. Thus, by \cref{lem:kr-reduction} there exists an $\ell$-uniform $\ell$-partite hypergraph $\hat{H}$ with at most $(1+(\ell-k)d^{k-1})n$ vertices and $d_1(\hat{H}) = d$, and an $\ell$-uniform $\ell$-partite hypergraph $\hat G$ with parts of size $N' = \lfloor N/\ell\rfloor$ and at least $p_{k,\ell, q}(N')^{\ell}$ edges, such that if there is a copy of $\hat{H}$ in $\hat G$ then $f$ contains a monochromatic copy of $H$.

Applying \cref{thm:lin-turan} with $(n,k,d,p) = (N', \ell, d, p_{k,\ell, q})$ we conclude that $\hat G$ contains a copy of $\hat H$ if $v(\hat H) \le cN'$, where $c$ is the constant depending on $p_{k,\ell, q}, \ell, d$ from \cref{thm:lin-turan}. Note that $c$ only depends on $k, d, q$. Using the facts that $v(\hat H) \le (1+(\ell-k)d^{k-1})n$ and $N' = \lfloor N/\ell \rfloor = \lfloor N/(d+1) \rfloor$, we conclude that $v(\hat H)\le cN'$ as long as $n\le c'N$ for $c' = \frac{c}{2(d+1)(1+(\ell-k)d^{k-1})}$, a constant depending only on $k,d,q$. As long as this inequality is satisfied, $\hat G$ contains a copy of $\hat H$ which in turn implies that $f$ contains a monochromatic copy of $H$. Hence the desired result holds with constant $1/c'$, which only depends on $k,d,q$.
\end{proof}

We spend the remainder of this section proving the Tur\'an result. In \cref{sec:pruning} we define our notion of defect for hypergraphs and prove that we can prune $G$ in a way that gives us control over the defect. In \cref{sec:embedding} we show that we can embed $H$ into $G$. In \cref{sec:proof-of-main} we combine these two results to prove \cref{thm:lin-turan}. Almost all of the novelty lies in \cref{sec:pruning} while the arguments in \cref{sec:embedding,sec:proof-of-main} closely follow the strategy used in \cite{Lee17}.

\subsection{Pruning}
\label{sec:pruning}

Let $G$ be a $k$-uniform $k$-partite hypergraph with vertex parts $V_1,\ldots,V_k$. Define
\[E^*(G)=\{e\subseteq V(G): e\subseteq f\text{ for some }f\in E(G)\}.\] %Viewing $G$ as a simplical complex, $E^*(G)$ is the set of all faces of $G$.
We refer to $E^*(G)$ as the \emph{partial edges} of $G$. Note that $\emptyset\in E^*(G)$ for any nonempty hypergraph $G$. 

For $Q\subseteq V(G)$, write 
\[PE_Q(G)= \{s\subseteq Q: s\in E^*(G)\}\]for the partial edges of $G$ contained in $Q$. For $E\subseteq E^*(G)$ define the \emph{common neighborhood} of $E$ to be
\[N(E; G)=\{v\in V(G) : e\sqcup\{v\}\in E^*(G) \text{ for all }e\in E\}.\]
For $Q\subseteq V(G)$ we will often consider $N(PE_Q(G);G)$, the common neighborhood of $Q$. In words this is the set of vertices $v$ such that every partial edge in $Q$ extends to another partial edge with the addition of the vertex $v$.

Finally for $\theta > 0$ define the \emph{defect}
\begin{equation}\label{eq:omega_theta set def}
\omega_\theta(Q, V_i; G) = \omega_\theta(|N(PE_Q(G); G)\cap V_i|),
\end{equation}
where $\omega_\theta\colon\mathbb R_{\geq 0}\to\mathbb R_{\geq0}\cup\{+\infty\}$ is defined by
\begin{equation}\label{eq:omega_theta function def}
\omega_{\theta}(x)=\begin{cases}
+\infty \quad &\text{if }x = 0,\\
\frac{\theta}{x}\quad&\text{if }0<x<\theta,\\
0&\text{otherwise}.
\end{cases}
\end{equation}

The defect $\omega_\theta(Q,V_i;G)$ measures if there are at least $\theta$ vertices of $V_i$ in the common neighborhood of $Q$. The defect is 0 when the common neighborhood contains at least $\theta$ vertices of $V_i$, the defect is infinite when the common neighborhood has zero vertices of $V_i$, and for intermediate sizes it interpolates between these values.

Given a partition $V_1\sqcup\cdots\sqcup V_k$, recall that we write $V_{-i}=\bigcup_{j\neq i}V_j$.
We will often consider $\omega_\theta(Q,V_i;G)$ for $Q\in (V_{-i})^d$. (Note that we abuse notation and view a tuple as its set of coordinates.)

Note that if $Q\subseteq Q'$, then $PE_Q(G) \subseteq PE_{Q'}(G)$, so $N(PE_Q(G);G)\cap V_i\supseteq N(PE_{Q'}(G);G)\cap V_i$, and thus $\omega_\theta(Q, V_i; G)\le \omega_\theta(Q', V_i; G)$.

We now state our main dependent random choice lemma.

\begin{lemma}
\label{lem:simul-pruning}
Suppose that $G$ is a $k$-partite $k$-uniform hypergraph on $V_1\sqcup \cdots \sqcup V_k$ with at least $p|V_1| \cdots |V_k|$ edges. Let $d, t$ be positive integers and $\theta$ be a positive real number. For each $1\leq i\leq k$, let $\bm{X}_i = ((\bm{X}_i)_{u})_{u\in [t]}\in V_i^t$ be a uniform random $t$-tuple of vertices. Let $G'\subseteq G$ be chosen in the following way: for every edge $e = (e_1, \dots, e_k)\in E(G)$, we keep $e$ in $G'$ if and only if all edges in $(\{e_1\}\cup \bm{X}_1)\times \cdots \times (\{e_k\}\cup \bm{X}_k)$ are in $E(G)$. Next, remove all isolated vertices to get the random $k$-partite $k$-uniform hypergraph $G'$ with parts $V_1'\sqcup \cdots \sqcup V_k'$. Then $\mathbb{E}[e(G')]\geq p^{(t+1)^k}|V_1|\cdots |V_k|$, and for each $i\in [k]$,
\[\mb{E}\left[\sum_{Q\in (V_{-i}')^d}\omega_\theta(Q, V_i'; G')^t\right] \leq 2^{2^d} \left(\frac{\theta}{|V_i|}\right)^t v(G)^d.\]
\end{lemma}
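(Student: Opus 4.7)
The plan is to prove the two conclusions separately. For the edge-count bound I would apply linearity of expectation to write
\[\mathbb{E}[e(G')] = \sum_{e \in E(G)} \Pr\bigl[P(e, \bm X) \subseteq E(G)\bigr],\]
where $P(e, \bm X) = (\{e_1\} \cup \bm X_1) \times \cdots \times (\{e_k\} \cup \bm X_k)$. For each edge $e$ I would view the distinguished coordinate $e_i$ together with $\bm X_i$ as forming a uniform $(t+1)$-tuple $\bm Y_i \in V_i^{t+1}$ conditioned on a distinguished coordinate equal to $e_i$. Summing over $e \in E(G)$ and unconditioning yields
\[\mathbb{E}[e(G')] = |V_1| \cdots |V_k| \cdot \Pr\bigl[\bm Y_1 \times \cdots \times \bm Y_k \subseteq E(G)\bigr],\]
with the $\bm Y_i$ independent uniform $(t+1)$-tuples. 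Iterated Jensen's inequality (the standard Sidorenko-style bound for tensor-power blowups of a $k$-partite form) then lower bounds this probability by $p^{(t+1)^k}$: conditioning on $\bm Y_2, \ldots, \bm Y_k$, the conditional probability factors as a $(t+1)$-st power since the coordinates of $\bm Y_1$ are iid, Jensen pushes the outer expectation inside the power, and iterating through the remaining tuples yields the claim.

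For the defect inequality, writing $T_Q := N(PE_Q(G'); G') \cap V_i'$, I would work one tuple $Q \in V_{-i}^d$ at a time. Since $(V_{-i}')^d \subseteq V_{-i}^d$, the sum can be rewritten as $\sum_{Q \in V_{-i}^d} \mathbb{E}[\mathbb{1}[Q \in (V_{-i}')^d]\,\omega_\theta(|T_Q|)^t]$, so it suffices to show
\[\mathbb{E}\bigl[\mathbb{1}\bigl[Q \in (V_{-i}')^d\bigr]\,\omega_\theta(|T_Q|)^t\bigr] \leq 2^{2^d}(\theta/|V_i|)^t;\]
summing over the at most $|V_{-i}|^d \leq v(G)^d$ choices of $Q$ then recovers the claim. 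The $2^{2^d}$ factor would arise from decomposing by the random ``type'' $\mathcal P := PE_Q(G')$, which is a downward-closed family of subsets of $Q$ and so takes at most $2^{2^d}$ values. For each fixed $\mathcal P$ I would bound the contribution by $(\theta/|V_i|)^t$ via a dependent-random-choice argument built around the algebraic identity
\[\omega_\theta(x)^t \cdot (x/|V_i|)^t \leq (\theta/|V_i|)^t,\]
valid for every $x \geq 0$ with the convention $\infty \cdot 0 = 0$. Introducing an independent auxiliary uniform $t$-tuple $\bm Z_i \in V_i^t$ so that $\Pr[\bm Z_i \subseteq T_Q \mid T_Q] = (|T_Q|/|V_i|)^t$, one can then couple $\bm Z_i$ with the sample $\bm X_i$ (they have the same marginal) conditional on the type $\mathcal P$ to remove the auxiliary indicator and recover the desired per-type expectation bound.

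The main obstacle is the intricate coupling in the second part and, relatedly, the case $|T_Q| = 0$ in which $\omega_\theta = +\infty$. Because the stated bound is finite, the event $\{|T_Q| = 0,\,Q \in (V_{-i}')^d\}$ must have probability zero on each type $\mathcal P$, which should follow from the structural properties of the DRC construction: every $q \in Q$ lies in some edge of $G'$, and conditioning on a fixed type $\mathcal P$ enforces enough compatibility among these edges to produce a common extension in $V_i'$. Making the coupling step rigorous while keeping all constants under control---so that the per-type bound really has only an $O(1)$ loss before summing $2^{2^d}$ types---will be the technical heart of the proof.
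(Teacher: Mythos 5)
Your first part is correct and is essentially a repackaging of the paper's argument: the paper proves $\mathbb{E}[e(G')] \geq p^{(t+1)^k}|V_1|\cdots|V_k|$ by iterating a convexity bound part-by-part through a chain $G=G_0\supseteq G_1\supseteq\cdots\supseteq G_k$, while you rewrite $\mathbb{E}[e(G')]$ as $|V_1|\cdots|V_k|\cdot\Pr[\bm Y_1\times\cdots\times\bm Y_k\subseteq E(G)]$ for i.i.d.\ uniform $(t+1)$-tuples $\bm Y_i$ and quote the Sidorenko-type bound for blowups. Both are iterated Jensen; the reformulation is valid.

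The second part is where the gap is. You correctly identify the counting over types (at most $2^{2^d}$ of them), and the algebraic identity $\omega_\theta(x)^t(x/|V_i|)^t\leq(\theta/|V_i|)^t$ is exactly the balancing step the paper uses. But you propose to condition on the \emph{exact} type $\mathcal P = PE_Q(G')$ and then couple an auxiliary tuple $\bm Z_i$ with $\bm X_i$, and this is where the argument stalls. The event $\{\mathcal P = PE_Q(G')\}$ is not monotone in the sample $\bm X$, and $T_Q = N(PE_Q(G');G')\cap V_i'$ still depends on $\bm X_i$ itself, so neither the per-type probability nor the coupling has the clean form you would need. The paper sidesteps this entirely with two moves you haven't made. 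First, it relaxes from the exact-type indicator to a monotone one: for each nonempty $E\subseteq PE_Q(G)$, use $\mathbf{1}\{E=PE_Q(G')\}\leq\mathbf{1}\{E\subseteq PE_Q(G')\}$ and then sum over $E$. Second — and this is the real missing idea — it conditions on $\bm X_j$ for $j\neq i$ and introduces the auxiliary edge set $\bm F_E = \bigcup_{s\in E}\prod_{\ell\neq i}(\bm X_\ell\cup(s\cap V_\ell))$. Then $\{E\subseteq PE_Q(G')\}$ is \emph{exactly} the event $\bm X_i\subseteq N(\bm F_E;G)$, a deterministic target set given the other tuples, and moreover $N(\bm F_E;G)\subseteq N(E;G')\cap V_i'$ whenever $\bm X_i\subseteq N(\bm F_E;G)$. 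This lets one replace $\omega_\theta(|N(E;G')\cap V_i'|)^t$ by the larger $\omega_\theta(|N(\bm F_E;G)|)^t$, take the expectation in $\bm X_i$ to pick up the factor $(|N(\bm F_E;G)|/|V_i|)^t$, and the two deterministic factors cancel via the identity. The $|T_Q|=0$ issue you flagged is then handled automatically: when $N(\bm F_E;G)=\emptyset$, the probability factor is $0$. You should replace the exact-type conditioning and coupling with this subset-relaxation plus $\bm F_E$ construction; the rest of your plan then goes through.
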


\begin{remark}
To prove the last inequality in the lemma, we will show that for each $d$-tuple of vertices $Q$, we have $\mb{E}\left[\omega_\theta(Q, V_i'; G')^t\right] \leq 2^{2^d}\left(\frac{\theta}{|V_i|}\right)^t$. In other words, we can control up through the $t$-th moment of the defect for each $Q$. Intuitively (and imprecisely), the reason for this is that if some $Q\in(V_{-i})^t$ has large defect, say $\theta/m$, then $Q$ has a small (size $m$) common neighborhood. In this case it is unlikely for $Q$ to survive into $G'$, since all $t$ of the vertices $(\bm X_i)_1,\ldots,(\bm X_i)_t$ would have to be chosen from the common neighborhood. Hence, although $Q$ would have $t$-th moment of defect $(\theta/m)^t$, the probability that $Q$ survives into $G'$ is only $(m/|V_i|)^t$. This gives the $(\theta/|V_i|)^t$ contribution in the inequality.

In the next sections we will take $\theta / v(G)$ to be a sufficiently small constant in terms of $p$. The inequality then implies that on average over $Q \in (V_{-i}')^d$ we expect $\omega_\theta(Q,V_i';G)^t$ to be small. Heuristically, this implies that for a large fraction of $d$-sets in $V'_{-i}$ the number of common neighbors is linear in $\theta$, which is itself linear in $v(G)$. This property will be crucial when we embed hypergraphs into $G$ vertex by vertex.
\end{remark}

\begin{proof}[Proof of \cref{lem:simul-pruning}]
Note that if any edge in $\bm{X}_1\times \cdots \times \bm{X}_k$ is not present in $G$ then $G'$ is empty. In what follows, expectations are taken over $(\bm{X}_1, \ldots, \bm{X}_k)$ unless specified otherwise.

First we compute the expectation of $e(G')$. Note that we can see this as a $k$-step procedure of deleting edges: $G = G_0 \leadsto G_1 \leadsto \cdots \leadsto G_k= G''\leadsto G'$, where in step $i\geq 1$ the hypergraph $G_i$ is derived from $G_{i-1}$ by keeping only edges $e\in G_{i-1}$ if $(e\cap V_{-i})\cup (\bm X_i)_u \in E(G_{i-1})$ for all $1\leq u\leq t$. Finally $G'$ is derived from $G''$ by removing isolated vertices, so $E(G') = E(G'')$. 

Condition on $G_{i-1}$. For each $e\in \prod_{j\ne i}V_j$, note that all $|N(\{e\}; G_{i-1})|$ edges containing $e$ are kept in $G_{i}$ if and only if $\bm X_i$ are all in $N(\{e\}; G_{i-1})$. First, we have
\[\sum_{e\in \prod_{j\ne i}V_j} |N(\{e\}; G_{i-1})| = e(G_{i-1}).\]
Second, since the probability that $\bm X_i$ are all in $N(\{e\}; G_{i-1})$ is $\left(\frac{|N(\{e\}; G_{i-1})|}{|V_i|}\right)^t$, we obtain
\[ \mb E_{\bm X_i}[e(G_i)] = \sum_{e\in \prod_{j\ne i}V_j}|N(\{e\}; G_{i-1})|\left(\frac{|N(\{e\}; G_{i-1})|}{|V_i|}\right)^t \geq \frac{e(G_{i-1})^{t+1}}{\prod_{j=1}^k|V_j|^t}.\]

As a consequence, combining this for all $i = k, k-1, \ldots, 1$, we have
\[\mb E\left[\frac{e(G'')}{\prod_{j=1}^k|V_j|}\right] \geq \mb E\left[\left(\frac{e(G_{k-1})}{\prod_{j=1}^k|V_j|}\right)^{(t+1)}\right] \geq \cdots \geq \left(\frac{e(G)}{\prod_{j=1}^k|V_j|}\right)^{(t+1)^k},\]
as desired.

Now we aim to establish the second inequality. By symmetry we may assume that $i = 1$. For each $Q\in V_{-1}^d$ and each nonempty subset $E\subseteq PE_Q(G)$, we first aim to bound
\[\E\left[\omega_\theta(|N(E; G')\cap V_1'|)^t\mathbf{1}\{E \subseteq PE_Q(G')\}\right].\]
For each $s\in E$, the event $s\in PE_{Q}(G')$ occurs if and only if every $k$-tuple in $\prod_{\ell=1}^k \bm X_\ell\cup(s\cap V_\ell)$ lies in $E(G)$. Note that since $Q\in(V_{-1})^d$, we know that $s\cap V_1=\emptyset$. Define $\bm{F}_E = \bigcup_{s\in E}\prod_{\ell=2}^k \bm{X}_\ell\cup (s\cap V_\ell)$. Thus the event $E\subseteq PE_Q(G')$ occurs if and only if $\bm X_1\times \bm F_E\subseteq E(G)$ which occurs if and only if $\bm X_1\subseteq N(\bm F_E;G)$. Furthermore, if $\bm X_1\subseteq N(\bm F_E;G)$, then $N(E;G')\cap V_1'\supseteq N(\bm F_E;G)$. Therefore, conditioning on $\bm F_E$, we compute
\begin{align*}
\E_{\bm X_1}\left[\omega_\theta(|N(E; G')\cap V_1'|)^t\mathbf{1}\{E \subseteq PE_Q(G')\}\right]
&\leq \E_{\bm X_1}\left[\omega_\theta(|N(\bm F_E; G)|)^t\mathbf{1}\{\bm X_1 \subseteq N(\bm F_E;G)\}\right]\\
&\leq \left(\frac{\theta}{|N(\bm F_E;G)|}\right)^t\left(\frac{|N(\bm F_E;G)|}{|V_1|}\right)^t\\
& = \left(\frac{\theta}{|V_1|}\right)^t.
\end{align*}

For each $Q\in(V_{-1}')^d$ clearly $PE_Q(G')\subseteq PE_Q(G)$. Furthermore since no vertex of $G'$ is isolated we also have $\emptyset\neq PE_Q(G')$. Thus for every $Q\in V_{-1}^d$,
\begin{align*}
\E\left[\omega_{\theta}(Q,V_1';G')^t\mathbf1\{Q\in(V_{-1}')^d\}\right]
&\leq \sum_{\emptyset\neq E\subseteq PE_Q(G)}\E\left[\omega_\theta(|N(E; G')\cap V_1'|)^t\mathbf{1}\{E = PE_Q(G')\}\right] \\
&\leq \sum_{\emptyset\neq E\subseteq PE_Q(G)}\E\left[\omega_\theta(|N(E; G')\cap V_1'|)^t\mathbf{1}\{E \subseteq PE_Q(G')\}\right] \\
& \leq \sum_{\emptyset\neq E\subseteq PE_Q(G)}\left(\frac{\theta}{|V_1|}\right)^t\\
&\leq 2^{2^d}\left(\frac{\theta}{|V_1|}\right)^t.
\end{align*}

Finally, taking the sum over all possible choices of $Q$ yields the inequality
\[\mb E\left[\sum_{Q\in (V_{-1}')^d}\omega_\theta(Q, V_1'; G')^t\right]\le 2^{2^d}\left(\frac{\theta}{|V_1|}\right)^t v(G)^d.\qedhere\]
\end{proof}

For each $i\in [k]$ and $\mathcal{Q}\subseteq (V_{-i})^d$, we define the average defect
\begin{equation}
\label{eqn:def-mu}
\mu_{\theta, t}\left(\mc Q, V_i; G\right) = \frac{1}{|\mc Q|}\sum_{Q\in \mc Q}\omega_\theta(Q, V_i; G)^t.
\end{equation}

\cref{lem:simul-pruning} can be used to construct a subhypergraph for which we have control over the average defects.

\begin{corollary}
\label{cor:simul-pruning}
Suppose that $G$ is a $k$-partite $k$-uniform hypergraph on $V_1\sqcup \cdots \sqcup V_k$ where $|V_1| = \cdots = |V_k| = n$ with at least $pn^k$ edges. Let $d, t$ be positive integers satisfying $d\geq k-1$ and let $\theta>0$. There exists a subhypergraph $G'$ of $G$, with vertex partition $V_1' \sqcup \ldots \sqcup V_k'$, that contains at least $\tfrac12p^{(t+1)^k}n^k$ edges and for each $i\in [k]$, we have
\[\mu_{\theta, t}\left((V'_{-i})^d, V_i'; G'\right) \leq 2^{2^d+1}k^{d+1}p^{-\frac{d}{k-1}(t+1)^k}\left(\frac{\theta}{n}\right)^t.\]
\end{corollary}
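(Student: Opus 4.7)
The plan is to apply \cref{lem:simul-pruning} to $G$ and then extract a good deterministic realization of $G'$ via a weighted first-moment argument that simultaneously controls $e(G')$ from below and each defect sum from above. I write $X := e(G')$ and $B_i := \sum_{Q \in (V'_{-i})^d} \omega_\theta(Q, V_i'; G')^t$ for each $i \in [k]$, so that the lemma supplies $\E[X] \geq p^{(t+1)^k}n^k$ together with $\E[B_i] \leq 2^{2^d}(\theta/n)^t (kn)^d$ for every $i$.

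The first key step is a deterministic AM--GM bound relating $|V'_{-i}|^d$ to $X$: since $X \leq |V_i'|\prod_{j\neq i}|V_j'| \leq n\prod_{j\neq i}|V_j'|$, one gets $|V'_{-i}|^d \geq (k-1)^d (X/n)^{d/(k-1)}$. This motivates introducing the auxiliary random variable $Z := X^{d/(k-1)}$. The hypothesis $d \geq k-1$ makes $x \mapsto x^{d/(k-1)}$ convex, so Jensen's inequality yields $\E[Z] \geq \E[X]^{d/(k-1)} \geq p^{d(t+1)^k/(k-1)} n^{kd/(k-1)}$.

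I then consider the random quantity
\[W := Z - \tfrac12 \E[Z] - \sum_{i=1}^k c\,B_i, \qquad c := \frac{\E[Z]}{2k\,\E[B_1]},\]
noting that $\E[B_i]$ is independent of $i$ by symmetry. A direct computation gives $\E[W] = 0$, so there exists a realization of $G'$ with $W \geq 0$. For such a realization, $Z \geq \tfrac12 \E[Z]$ (which, since $(k-1)/d \leq 1$, implies $X \geq \tfrac12 p^{(t+1)^k}n^k$) and $c B_i \leq Z$ for every $i$. Combining the latter with the AM--GM bound yields
\[\mu_{\theta,t}\bigl((V'_{-i})^d, V_i'; G'\bigr) = \frac{B_i}{|V'_{-i}|^d} \leq \frac{n^{d/(k-1)}}{c\,(k-1)^d},\]
and substituting in $c$ together with the moment bounds reproduces the claimed inequality after using $(k-1)^{-d} \leq 1$ for $k \geq 2$.

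The main conceptual obstacle is that the event $\{X \geq \tfrac12 \E[X]\}$ has probability only $\Theta(p^{(t+1)^k})$, which is too small to condition on without paying an extra factor of $p^{-(t+1)^k}$ in a naive Markov bound on each $B_i$. Replacing $X$ with $Z = X^{d/(k-1)}$ before running the first-moment argument is exactly what cancels this loss: the gain from Jensen on $\E[Z]$ matches precisely the $p^{d(t+1)^k/(k-1)}$ factor produced by the AM--GM lower bound on $|V'_{-i}|^d$, so both requirements end up sharing a single factor of $p^{d(t+1)^k/(k-1)}$ rather than compounding two separate losses.
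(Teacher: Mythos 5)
Your argument is correct and follows essentially the same route as the paper: both introduce the auxiliary variable $e(G')^{d/(k-1)}$, apply Jensen using $d \geq k-1$, and run a single linearity-of-expectation first-moment argument to extract a realization simultaneously controlling $e(G')$ and each defect sum, then convert via the elementary bound $e(G') \leq n |V'_{-i}|^{k-1}$. One small remark: the appeal to ``$\E[B_i]$ independent of $i$ by symmetry'' is unnecessary and slightly fragile as stated; it is cleaner to set $c := \E[Z]/(2kM)$ where $M := 2^{2^d}k^d\theta^t n^{d-t}$ is the common upper bound from \cref{lem:simul-pruning}, which gives $\E[W] \geq 0$ directly without any symmetry claim.
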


\begin{proof}
Let $G' \subseteq G$ be a random subhypergraph defined by the random process in \cref{lem:simul-pruning}, with parts $V_1'\sqcup \cdots \sqcup V_k'$. We shall define
\[S_i = \sum_{Q\in (V_{-i}')^d}\omega_\theta(Q, V_i'; G')^t.\]
Then by \cref{lem:simul-pruning} we have $\mb E[S_i] \leq 2^{2^d}k^d\theta^t n^{d-t}$.  Since $d\geq k-1$, by convexity we know that
\[\mb E\left[e(G')^{\frac d{k-1}}\right]\ge \left(\mb E[e(G')]\right)^{\frac d{k-1}}\geq p^{\frac{d}{k-1}(t+1)^k}n^{\frac{dk}{k-1}}.\]
Therefore, there exists a choice of subhypergraph $G'$ for which
\[\frac{e(G')^{\frac d{k-1}}}{p^{\frac{d}{k-1}(t+1)^k}n^{\frac{dk}{k-1}}} - \sum_{i=1}^k \frac{1}{2k}\frac{S_i}{2^{2^d}k^d\theta^t n^{d-t}} \geq \mb E\left[\frac{e(G')^{\frac d{k-1}}}{p^{\frac{d}{k-1}(t+1)^k}n^{\frac{dk}{k-1}}} - \sum_{i=1}^k \frac{1}{2k}\frac{S_i}{2^{2^d}k^d\theta^t n^{d-t}}\right] \ge \frac{1}{2}.\]
This means that $e(G') \geq \tfrac12 p^{(t+1)^k}n^k$ and for each $i\in[k]$ we have 
\[\frac{S_i}{e(G')^{\frac{d}{k-1}}} \leq 2k2^{2^d}k^dp^{-\frac{d}{k-1}(t+1)^k}n^{-\frac{d}{k-1}}\left(\frac{\theta}{n}\right)^t.\]
Note that $e(G') \le \prod_{j=1}^k |V_j'| \leq n|V_{-i}'|^{k-1}$. We conclude that
\[\mu_{\theta, t}\left((V_{-i}')^d, V_i'; G'\right) = \frac{1}{|V'_{-i}|^d}S_i \le \frac{S_i n^{\frac{d}{k-1}}}{e(G')^{\frac{d}{k-1}}} < 2^{2^d+1}k^{d+1}p^{-\frac{d}{k-1}(t+1)^k}\left(\frac{\theta}{n}\right)^t.\qedhere\]
\end{proof}

\subsection{Embedding}
\label{sec:embedding}

In this section $G$ and $H$ always refer to hypergraphs of the following type.

\begin{setup}
\label{stp}
Fix positive integers $K\geq k\geq 2$ and $d\geq 1$. Let $G$ be a $k$-uniform hypergraph with vertex set partitioned as $V(G)=V_1\sqcup \cdots\sqcup V_K$. Assume that $G$ has no isolated vertices.

Let $H$ be a $k$-uniform hypergraph with vertex set partitioned as $V(H)=W_1\sqcup\cdots\sqcup W_K$. Assume that each edge of $H$ has vertices coming from $k$ different parts of the partition. For each vertex $v\in W_i$ with $i<K$, let $N^+(v)$ be the set of neighbors of $v$ in the 1-skeleton of $H$ which lie in $W_{i+1}\sqcup\cdots\sqcup W_K$. Assume that $|N^+(v)|\leq d$ for each vertex $v\in W_i$ with $i<K$. Pick $d$-tuples $f_v\in(W_{i+1}\sqcup\cdots\sqcup W_K)^d$ which consist of the elements of $N^+(v)$ in some order padded by $d-|N^+(v)|$ additional vertices.

Let $\theta_1,\ldots,\theta_{K-1}>0$ be real numbers. Finally assume that $|V_K|\geq |W_k|$.
\end{setup}

We use the following random greedy process based on \cite[Section 4]{Lee17} to define a random map $\psi\colon V(H)\to V(G)$.

\begin{algorithm}
\label{alg}
Suppose we are in \cref{stp}. Define a random map $\psi\colon V(H)\to V(G)$ as follows:
\begin{enumerate}[1.]
    \item  Order the vertices of $W_K$ arbitrarily as $x_1,\ldots, x_{|W_K|}$. For each vertex $x_j$, let $\psi(x_j)$ be a uniform random element of $V_K\setminus\{\psi(x_1),\ldots,\psi(x_{j-1})\}$.
    \item In decreasing order, for each $i<K$, given a map $\psi$ defined on $W_{i+1}\sqcup\cdots\sqcup W_K$, we extend $\psi$ to $W_i$ as follows. Let $x_1,\ldots, x_{|W_i|}$ be an ordering of $W_i$ such that $\omega_{\theta_i}(\psi(f_{x_j}),V_i;G)$ is non-increasing in $j$.
    \item After embedding $x_1, \dots, x_{j-1}$, we now embed $x_j$. Define
    \[N_j=N(PE_{\psi(f_{x_j})}(G);G)\cap V_i\] and \[L_j=N_j\setminus\{\psi(x_1),\ldots,\psi(x_{j-1})\}.\]
    \begin{enumerate}
        \item If $|N_j|=0$, let $\psi(x_j)$ be a uniform random vertex in $V_i$;
        \item If $|L_j|<|N_j|/2$, let $\psi(x_j)$ be a uniform random vertex in $N_j$;
        \item If $|L_j|\geq|N_j|/2$, let $\psi(x_j)$ be a uniform random vertex in $L_j$.
    \end{enumerate}
\end{enumerate}
\end{algorithm}

Our goal is to show that if $G$ has small average defect then \cref{alg} produces an embedding $\psi\colon V(H)\to V(G)$. In particular, we want to show that we always use step 3(c) and never steps 3(a) and 3(b).

\begin{notation}\label{not:algorithm analysis}
We now introduce some notation used to analyze \cref{alg}. For $i<K$, $x\in W_i$, and $\phi\colon V(H)\to V(G)$ we use the following shorthand:
\begin{enumerate}
\item $\theta_x=\theta_i$ and $V_x=V_i$;
\item $\omega(x;\phi)=\omega_{\theta_x}(\phi(f_x),V_x;G)$;
\item $\mathcal{Q}_x=V_{i_1}\times\cdots\times V_{i_{d}}$ where $f_x\in W_{i_1}\times\cdots\times W_{i_{d}}$;
\item $\mu_t(x)=\mu_{\theta_x,t}(\mathcal{Q}_x,V_x;G)$ and $\mu_t=\max_{x\in V(H)}\mu_t(x)$;
\item $\gamma=\max\{1,\max_{i<K}|V_i|/\theta_i\}$.
\end{enumerate}
We extend this notation to $i=K$ as follows. For $x\in W_K$ we write $V_x=V_K$ and $\omega(x;\phi)=0$.
\end{notation}

\begin{remark}
Note that $\mu_{\theta,t}(\mathcal{Q},V_i;G)$ is only finite for some choices of $\mathcal Q=V_{i_1}\times\cdots\times V_{i_d}$. For example, if there is an edge $e=\{v_1,\ldots,v_k\}$ with $v_j\in V_{i_j}$ say, then $\mu_{\theta,t}(\mathcal{Q},V_i;G)$ is infinite. This is because $e$ is an entire edge so it can have no extension to $V_i$. In our application we will choose the vectors $f_x$ carefully so that $\mu_t(x)=\mu_{\theta_x,t}(\mathcal{Q}_x,V_x;G)$ is always finite (and in fact quite small).
\end{remark}

\begin{lemma}[{cf. \cite[Lemma 4.3]{Lee17}}]
\label{lem:omega-embed}
Suppose we are in \cref{stp}. Assume that $\theta_i\geq 2|W_i|$ for all $i<K$. Let $\psi$ be the random map produced by \cref{alg}. For $s\geq 1$, let $\phi\colon V(H)\to V(G)$ be a deterministic map satisfying $\Pr(\psi=\phi)>0$ and $\sum_{x\in W_i}\omega(x;\phi)^s\leq\tfrac12\theta_i$ for all $i<K$. Then $\phi$ is an embedding of $H$ in $G$.
\end{lemma}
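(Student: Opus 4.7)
The plan is to show that under the stated hypotheses, the random algorithm producing $\psi$ must, conditional on $\psi=\phi$, always take branch 3(c); once this is established the conclusion follows from a short downward induction along each edge of $H$. First I would rule out branch 3(a) for every vertex $x_j\in W_i$ with $i<K$: if $|N_j|=0$ then $\omega(x_j;\phi)=+\infty$, which would make $\sum_{x\in W_i}\omega(x;\phi)^s$ infinite, contradicting the hypothesis $\sum_{x\in W_i}\omega(x;\phi)^s\le\tfrac12\theta_i$.

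The main point of the lemma is to rule out branch 3(b), which is also where the choice of ordering on $W_i$ by non-increasing $\omega$ in step 2 of \cref{alg} becomes crucial. Suppose 3(b) fires on some $x_j\in W_i$. I would first argue $|N_j|<\theta_i$: otherwise, using $\theta_i\ge 2|W_i|$, at most $j-1\le|W_i|-1<\theta_i/2\le|N_j|/2$ previous images lie in $N_j$, so $|L_j|\ge|N_j|/2$ and branch 3(c) would fire instead. Hence $\omega(x_j;\phi)=\theta_i/|N_j|>1$, and the step-2 ordering yields $\omega(x_k;\phi)\ge\omega(x_j;\phi)$ for every $k<j$. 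Setting $m_j:=|\{k<j:\psi(x_k)\in N_j\}|$, branch 3(b) gives $m_j\ge|N_j|-|L_j|>|N_j|/2$. Since $s\ge 1$ and $\theta_i/|N_j|\ge 1$, each $\omega(x_k;\phi)^s\ge\theta_i/|N_j|$ for such $k$, so
\[\sum_{x\in W_i}\omega(x;\phi)^s \;\ge\; m_j\cdot\frac{\theta_i}{|N_j|} \;>\; \frac{|N_j|}{2}\cdot\frac{\theta_i}{|N_j|} \;=\; \frac{\theta_i}{2},\]
contradicting the hypothesis.

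With both bad branches ruled out, every iteration executes branch 3(c), so $\psi(x_j)\in L_j$. This immediately gives (i) $\psi$ is injective on each $W_i$, hence injective on $V(H)$ by disjointness of the parts $V_r$; and (ii) for each $x_j\in W_i$ with $i<K$, $\psi(x_j)$ extends every partial edge in $PE_{\psi(f_{x_j})}(G)$. To conclude that $\phi=\psi$ sends each edge of $H$ into $E(G)$, fix $e=\{x_{i_1},\ldots,x_{i_k}\}\in E(H)$ with its vertices ordered so that the corresponding part indices satisfy $a_1<\cdots<a_k$, and prove by downward induction on $j$ that $\psi(\{x_{i_j},\ldots,x_{i_k}\})\in E^*(G)$. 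The base case $j=k$ holds since $G$ has no isolated vertices. For the inductive step, $\{x_{i_{j+1}},\ldots,x_{i_k}\}\subseteq N^+(x_{i_j})\subseteq f_{x_{i_j}}$, so by the inductive hypothesis $\psi(\{x_{i_{j+1}},\ldots,x_{i_k}\})\in PE_{\psi(f_{x_{i_j}})}(G)$, and (ii) lets us adjoin $\psi(x_{i_j})$. Taking $j=1$ and using $|\psi(e)|=k$ from injectivity yields $\phi(e)\in E(G)$, completing the proof that $\phi$ is an embedding.
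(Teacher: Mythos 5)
Your proof is correct and follows essentially the same strategy as the paper's: exploit the non-increasing $\omega$-ordering in Step~2 together with the bound $\sum_{x\in W_i}\omega(x;\phi)^s\leq\tfrac12\theta_i$ to show branch 3(c) always fires, then verify the embedding by downward induction on the vertices of each edge. The only cosmetic difference is that the paper argues directly that $|N_j|\geq 2j$ (after reducing to $s=1$), whereas you argue by contradiction via the bound $m_j>|N_j|/2$; these are interchangeable.
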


\begin{proof}
It suffices to prove this when $s=1$ since the hypothesis is stronger for larger $s$. We now prove that for each $i<K$, if we condition on $\psi|_{W_{i+1}\cup\cdots\cup W_K} = \phi|_{W_{i+1}\cup\cdots\cup W_K}$ then \cref{alg} embeds $W_i$ using step 3(c) for every vertex. 

Since $\omega(x_1;\phi)\geq \cdots\geq \omega(x_j;\phi)$, by hypothesis we have $\omega(x_j;\phi)\leq\tfrac{1}{2j}\theta_i$. We claim that $|N_j| \geq 2j > 0$. Indeed, by definition, $\omega(x_j;\phi)=\omega_{\theta_i}(|N_j|)$. Hence, if $|N_j| \in(0, \theta_i)$ then by definition of $\omega_{\theta_i}$ we have the bound $\theta_i/|N_j|\leq \theta_i/(2j)$. This implies that $|N_j|\geq 2j>0$. Additionally, we must have $|N_j| > 0$, for otherwise by definition $\omega(x_j;\phi) = \infty > \frac{1}{2}\theta_i$. Finally, since we have assumed that $\theta_i \geq 2|W_i| \geq 2j$, if $|N_j| \geq \theta_i$ then $|N_j| \geq 2j$, as claimed. Now, since $|L_j|\geq|N_j|-j+1 \geq |N_j|/2$ we are in case 3(c) as desired. In particular, this implies that $\phi\colon V(H)\to V(G)$ is injective. Furthermore, it implies that for every $x \in W_1\sqcup\cdots\sqcup W_{K-1}$ it is the case that $\phi(x) \in N(PE_{\phi(f_x)}(G);G)\cap V_i$.

Next we verify that $\phi$ is a hypergraph homomorphism. We will prove inductively that for every $e \in E^*(H)$ that is contained in $W_i \cup \cdots \cup W_K$ it holds that $\phi(e) \in E^*(G)$. For the base case, suppose that $e \subseteq W_K$ and $e \in E^*(H)$. Since every edge in $H$ has at most one vertex in each part we conclude that $e=\{x\}$ for some $x \in W_K$. Hence, since $G$ has no isolated vertices we have $\{\phi(x)\}\in E^*(G)$.

Assume that we have proved the claim for $e \subseteq W_{i+1}\cup\cdots\cup W_K$. Let $e \in E^*(H)$ with $e \subseteq W_i \cup \cdots \cup W_K$ and $e \cap W_i \neq \emptyset$. Since $e$ has at most one vertex from each part there is some $x \in W_i$ such that $\{x\} = e \cap W_i$. Set $e' = e \setminus \{x\}$ and note that $e' \in E^*(H)$. We have already shown that $\phi(x)$ is an element of $N(PE_{\phi(f_x)}(G);G)\cap V_i$. Furthermore, we know that $e'$ is a subset of $f_x$, and by the inductive assumption we know that $\phi(e')\in E^*(G)$. Combining these facts we see that $\phi(e)\in E^*(G)$.

We have shown that for every edge $e\in E(H)$, we have $\phi(e)\in E^*(G)$. Since $\phi$ is an injection $|\phi(e)|=|e|$. Since both $G$ and $H$ are $k$-uniform this implies that $\phi(e)\in E(G)$, completing the proof that $\phi$ is an embedding.
\end{proof}

In analyzing \cref{alg} the main difficulty is controlling the sums $\sum_{x\in W_i}\omega(x;\phi)$. If $\phi\colon  W_{i+1}\cup\cdots\cup W_{K}\to V_{i+1}\cup\cdots\cup V_{K}$ were a uniform random map, then in expectation this sum would simply be $|W_i|$ times the average defect. Thus our goal is to control how much \cref{alg} differs from a uniform random map.

Following \cite[Section 4]{Lee17} we make the following definitions. Denote by $\nu$ the probability distribution on maps $\psi\colon V(H)\to V(G)$ induced by \cref{alg}. For $I\subseteq V(H)$, the probability distribution obtained from $\nu$ by \emph{neutralizing $I$} is the distribution induced by following the randomized process above but each time we reach a vertex $x\in I$ we choose the image of $x$ to be a uniform random vertex of $V_x$.

\begin{lemma}[{cf. \cite[Lemma 4.4]{Lee17}}]
\label{lem:neutralizing}
Suppose we are in \cref{stp}. Assume that $|V_K|\geq 2|W_K|$. Let $\nu$ be the probability distribution on maps $\psi\colon V(H)\to V(G)$ induced by \cref{alg}. For sets $I_1,I_2,J\subseteq V(H)$ with $I_2=I_1\cup J$, let $\nu_i$ be the distribution obtained from $\nu$ by neutralizing $I_i$ for $i=1,2$. Let $X$ be any random variable which only depends on the images of the vertices in $J$. If $t=|J\setminus I_1|\geq 1$, then
\[\E_{\nu_1}[X]\leq 2^t\gamma^t\E_{\nu_2}[X]+2^{2t-1}\gamma^{2t}\E_{\nu_2}[X^2]+\frac1{2t}\sum_{y\in J\setminus I_1}\E_{\psi\sim\nu_2}[\omega(y;\psi)^{2t}].\]
\end{lemma}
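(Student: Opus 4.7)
The plan is to adapt the argument of Lee's Lemma 4.4 \cite{Lee17} to our setting. The random processes generating $\nu_1$ and $\nu_2$ agree off the $t = |J \setminus I_1|$ vertices in $J \setminus I_1$; on those vertices, $\nu_1$ still follows the rules of \cref{alg} (step 3) while $\nu_2$ samples uniformly from $V_y$. Because $X$ depends only on the images of $J$, we can couple the two processes along their common randomness and write $\mathbb{E}_{\nu_1}[X] = \mathbb{E}_{\nu_2}[X \cdot R]$, where $R = \prod_{y \in J\setminus I_1} r_y$ and each $r_y$ is the Radon--Nikodym derivative of the conditional distribution of $\psi(y)$ (given the already-embedded vertices) under $\nu_1$ with respect to $\nu_2$. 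This derivative is well-defined because $\nu_2$ puts uniform positive mass on every vertex of $V_y$.

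I would then bound $r_y$ by a case analysis on which branch of \cref{alg} step 3 is active at $y$. In branch (a) the two conditional distributions coincide, so $r_y = 1 \le 2\gamma$. In branches (b) and (c), $\nu_1$ is uniform on $N_j$ or $L_j$ respectively, giving $r_y \le |V_y|/|N_j|$ or $r_y \le 2|V_y|/|N_j|$ (using $|L_j| \ge |N_j|/2$). Unpacking the definitions of $\omega_\theta$ and $\gamma$ from \cref{not:algorithm analysis}, these bounds combine into
\[r_y \le 2\gamma \cdot \max\bigl(1, \omega(y;\psi)\bigr) \text{ when } |N_j| > 0, \quad\text{and}\quad r_y = 1 \text{ when } |N_j| = 0.\]
Call $y$ \emph{bad} if $1 < \omega(y;\psi) < \infty$ and write $B = B(\psi)$ for the random set of bad vertices. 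Then $r_y \le 2\gamma$ for $y \notin B$ and $r_y \le 2\gamma\,\omega(y;\psi)$ for $y \in B$, so pointwise
\[R \le (2\gamma)^t \prod_{y \in B}\omega(y;\psi).\]

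Finally, I would split $\mathbb{E}_{\nu_2}[XR]$ according to whether $B$ is empty. The contribution from $B = \emptyset$ is at most $(2\gamma)^t \mathbb{E}_{\nu_2}[X]$, which matches the first term on the right-hand side. For the contribution from $B \neq \emptyset$, let $Y = \prod_{y \in B}\omega(y;\psi)$ and apply Cauchy--Schwarz to obtain $(2\gamma)^t\sqrt{\mathbb{E}_{\nu_2}[X^2] \cdot \mathbb{E}_{\nu_2}[Y^2 \mathbf{1}\{B \neq \emptyset\}]}$, followed by the AM--GM estimate $\sqrt{ab} \le \epsilon a/2 + b/(2\epsilon)$ with $\epsilon = (2\gamma)^t$ to split into the $2^{2t-1}\gamma^{2t}\mathbb{E}_{\nu_2}[X^2]$ summand and a leftover $\mathbb{E}_{\nu_2}[Y^2\mathbf{1}\{B \neq \emptyset\}]$ term. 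To bound this leftover by $\frac{1}{t}\sum_{y \in J\setminus I_1}\mathbb{E}_{\nu_2}[\omega(y;\psi)^{2t}]$, I would apply Young's inequality with $t$ equal exponents to $\prod_{y \in B}\omega_y^2$, padding the product with $1$'s for $y \notin B$, and use $\omega_y \ge 1$ on $B$ together with $|B| \le t$ to promote each $\omega_y^{2|B|}$ to $\omega_y^{2t}$.

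The main obstacle will be the constant tracking in the Cauchy--Schwarz / AM--GM / Young chain. The $1/(2t)$ factor in the final term is the delicate part: it requires applying Young's inequality with $t$ factors (the size of the entire vertex set $J\setminus I_1$, including padding) rather than only the $|B|$ factors arising from the bad vertices, so that the sharper $1/t$ appears rather than the looser $1/|B|$.
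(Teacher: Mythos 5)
Your Radon--Nikodym/coupling reformulation of the paper's induction is valid: the two processes factor over the embedding order, so $\E_{\nu_1}[X]=\E_{\nu_2}[X\cdot R]$ with $R=\prod_{y\in J\setminus I_1}r_y$, and the case analysis on steps 3(a)--(c) correctly yields $r_y\le 2\gamma\max\{1,\omega(y;\psi)\}$ (with the $W_K$ case covered by $|V_K|\ge 2|W_K|$). Up to this point you parallel the paper, which proves the same bound $\E_{\nu_1}[X]\le\E_{\nu_2}[X\prod_y C_y(\psi)]$ by a step-by-step induction on partial maps $\psi_s$; your viewpoint is a little cleaner, theirs a little more hands-on, and both reach $C_y\le 2\gamma\max\{1,\omega(y;\cdot)\}$.

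The gap is in the final step. After your split on $B=\emptyset$ versus $B\neq\emptyset$, Cauchy--Schwarz, and AM--GM, the leftover you must dispatch is $\tfrac12\,\E_{\nu_2}[Y^2\mathbf{1}\{B\neq\emptyset\}]$ with $Y=\prod_{y\in B}\omega(y;\psi)$, and you need $\E_{\nu_2}[Y^2\mathbf{1}\{B\neq\emptyset\}]\le \tfrac1t\sum_{y\in J\setminus I_1}\E_{\nu_2}[\omega(y;\psi)^{2t}]$. That inequality fails pointwise. Young's inequality with $t$ factors, padding $a_y=1$ for $y\notin B$, gives only
\[
\prod_{y\in B}\omega_y^2\ \le\ \frac1t\Bigl(\sum_{y\in B}\omega_y^{2t}+(t-|B|)\Bigr),
\]
and the parasitic $(t-|B|)/t$ does not disappear; since $\omega_y=0$ for $y\notin B$ (off the $B=\emptyset$ and $\omega=\infty$ cases), the right target is $\tfrac1t\sum_{y\in B}\omega_y^{2t}$ and you are short by $(t-|B|)/t$. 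Concretely, with $t=2$, $B=\{y_1\}$, $\omega_{y_1}=1.1$, $\omega_{y_2}=0$ one gets $Y^2=1.21$ but $\tfrac1t\sum_y\omega_y^{2t}\approx 0.73$. Applying Young with only $|B|$ factors and promoting $\omega_y^{2|B|}$ to $\omega_y^{2t}$ produces $\tfrac1{|B|}$, which is the wrong direction since $\tfrac1{|B|}\ge\tfrac1t$. The deeper issue is that conditioning on $B\neq\emptyset$ wastes the full $2^t\gamma^t\E_{\nu_2}[X]$ budget on the $B=\emptyset$ slice, leaving nothing to absorb the padding.

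The paper does not split on $B$. It instead uses
\[
\prod_{y\in J\setminus I_1}\max\{1,\omega(y;\psi)\}\ \le\ \frac1t\sum_{y\in J\setminus I_1}\max\{1,\omega(y;\psi)\}^t\ \le\ 1+\frac1t\sum_{y\in J\setminus I_1}\omega(y;\psi)^{t},
\]
so the ``$1$'' is absorbed directly into the term $2^t\gamma^t\E_{\nu_2}[X]$, and then for each $y$ it applies the pointwise estimate $2^t\gamma^t X\cdot\omega(y;\psi)^t\le \tfrac12\bigl(2^{2t}\gamma^{2t}X^2+\omega(y;\psi)^{2t}\bigr)$. Summing over the $t$ values of $y$ recovers $2^{2t-1}\gamma^{2t}\E_{\nu_2}[X^2]+\tfrac1{2t}\sum_y\E_{\nu_2}[\omega(y;\psi)^{2t}]$ with no extra additive losses. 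If you replace your $B$-split and product-level Cauchy--Schwarz by this additive decomposition and term-by-term AM--GM, your Radon--Nikodym front end carries through.
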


Other than a slight change in the definition of $C_y(\phi)$, this proof is identical to \cite[Proof of Lemma 4.4]{Lee17}.

\begin{proof}
For $y\in I_2\setminus I_1$, define
\[C_y(\phi)=\frac{2|V_y|}{|N(PE_{\phi(f_y)}(G);G)\cap V_y|}\] if $y\notin W_K$ and $C_y(\phi)=2$ if $y\in W_K$.  The motivation for this definition (as well as the assumption that $|V_K|\geq 2|W_K|$) is that when embedding $y$ in \cref{alg}, each particular vertex is chosen as its image with probability at most $C_y(\phi)/|V_y|$. Furthermore, we have the inequality $C_y(\phi)\leq 2\gamma \max\{1, \omega(y;\phi)\}$ for all $y$. 

For $0\leq s\leq v(H)$, define $\psi_s$ to be the random partial map $V(H)\to V(G)$ defined by running \cref{alg} until it embeds $s$ vertices. For a set $W\subseteq V(H)$ of size $s$ and a deterministic map $\phi\colon W\to V(G)$ our goal is to understand the event $\psi_s=\phi$. Note that this event occurs with nonzero probability only for some choices of $W$ and $\phi$.

Define $I_W= I_2\setminus(W\cup I_1)$ and let $\nu_W$ be the distribution obtained from $\nu$ by neutralizing $I_1\cup I_W$. Our goal is to prove
\begin{equation}
\label{eq:neutralize-induction}
\E_{\psi\sim \nu_1}[X|\psi_s=\phi]\leq\E_{\psi\sim \nu_W}\left[X\cdot\prod_{y\in I_W}C_y(\psi)\right|\left.\vphantom{\prod_{I_W}}\psi_s=\phi\right]
\end{equation}
for all choices of $s,W,\phi$ with $s=|W|$. We abuse notation and say that \cref{eq:neutralize-induction} holds vacuously for those $s,W,\phi$ for which $\Pr(\psi_s=\phi)=0$. 

We prove \cref{eq:neutralize-induction} by induction on $s$. For the base case, suppose $s= v(H)$. Then $I_W=\emptyset$ so $\nu_1=\nu_W$ and the product disappears. 

Let $W$ be a set of size $s$ and suppose that \cref{eq:neutralize-induction} holds for all larger sets. Given $\psi_s=\phi$, we know which vertex will be embedded next by the random process $\nu_1$. Let this vertex be $z\in W_i$ for some $i$. For each vertex $v\in V_i$, write $\phi_v\colon W\cup\{z\}\to V(G)$ for the map which sends $z$ to $v$ and otherwise agrees with $\phi$. Then we have
\begin{equation}\label{eq:X conditioned on psi_t}
\begin{split}
\E_{\psi\sim \nu_1}[X|\psi_s=\phi]
&=\sum_{v\in V_i}\Pr_{\nu_1}(\psi(z)=v|\psi_s=\phi)\E_{\psi\sim \nu_1}[X|\psi_{s+1}=\phi_v]\\
&\leq \sum_{v\in V_i}\Pr_{\nu_1}(\psi(z)=v|\psi_s=\phi)\E_{\psi\sim \nu_{W\cup\{z\}}}\left[X\cdot\prod_{y\in I_{W\cup\{z\}}}C_y(\psi)\right|\left.\vphantom{\prod_{I_{W\cup\{z\}}}}\psi_{s+1}=\phi_v\right]\\
&= \sum_{v\in V_i}\Pr_{\nu_1}(\psi(z)=v|\psi_s=\phi)\E_{\psi\sim \nu_W}\left[X\cdot\prod_{y\in I_{W\cup\{z\}}}C_y(\psi)\right|\left.\vphantom{\prod_{I_{W\cup\{z\}}}}\psi_{s+1}=\phi_v\right].
\end{split}
\end{equation}
To see the last line, note that $\nu_W$ and $\nu_{W\cup \{z\}}$ only differ on whether or not $z$ is neutralized. However, conditioning on $\psi_{s+1}=\phi_v$, we see that the image of $z$ is already fixed and thus the two distributions become identical. Let us point out that we continue to abuse notation slightly in the above equation since for some choices of $v$ the event $\psi_{s+1}=\phi_v$ never occurs so we are conditioning on a null event. However for these terms we are multiplying by 0 so the equation continues to be valid.

We have two cases. Either $I_{W\cup\{z\}}=I_W$, or $I_{W\cup\{z\}} \cup \{z\} =I_W$. If $I_{W\cup\{z\}}=I_W$, then $z\notin I_W$ so $\Pr_{\nu_1}(\psi(z)=v|\psi_s=\phi)=\Pr_{\nu_W}(\psi(z)=v|\psi_s=\phi)$ (to see this, note that we are conditioning on the fact that the two processes agree for the first $s$ steps and since neither of them neutralize $z$ they agree on step $s+1$ as well). Thus in this case, the final expression in \eqref{eq:X conditioned on psi_t} can be written as 
\[\sum_{v\in V_i}\Pr_{\nu_W}(\psi(z)=v|\psi_s=\phi)\E_{\psi\sim \nu_W}\left[X\cdot\prod_{y\in I_W}C_y(\psi)\right|\left.\vphantom{\prod_{I_W}}\psi_{s+1}=\phi_v\right]=\E_{\psi\sim \nu_W}\left[X\cdot\prod_{y\in I_{W}}C_y(\psi)\right|\left.\vphantom{\prod_{I_W}}\psi_s=\phi\right].\]

In the second case $I_{W\cup\{z\}} \cup \{z\} =I_W$. This means that $z\in I_W$, but $z\notin I_1$. We claim that this implies that
\[\Pr_{\nu_1}(\psi(z)=v|\psi_s=\phi)\leq C_z(\phi)\Pr_{\nu_W}(\psi(z)=v|\psi_s=\phi).\]
Indeed, since $z$ is neutralized in $\nu_W$, each element of $V_i$ gets chosen as $\psi(z)$ with probability $1/|V_i|$ under $\nu_W$. Since $z$ is not neutralized in $\nu_1$, by our earlier observation each vertex gets chosen as $\psi(z)$ with probability at most $C_z(\phi)/|V_i|$ under $\nu_1$ conditioned on $\psi_s=\phi$. This implies the desired inequality. Thus in this case, we can write
\begin{align*}
\E_{\psi\sim \nu_1}[X|\psi_s=\phi]
&\leq \sum_{v\in V_i}C_z(\phi)\Pr_{\nu_W}(\psi(z)=v|\psi_s=\phi)\E_{\psi\sim \nu_W}\left[X\cdot\prod_{y\in I_{W\cup\{z\}}}C_y(\psi)\right|\left.\vphantom{\prod_{I_{W\cup\{z\}}}}\psi_{s+1}=\phi_v\right]\\
&= \sum_{v\in V_i}\Pr_{\nu_W}(\psi(z)=v|\psi_s=\phi)\E_{\psi\sim \nu_W}\left[X\cdot\prod_{y\in I_{W}}C_y(\psi)\right|\left.\vphantom{\prod_{I_{W}}}\psi_{s+1}=\phi_v\right]\\
&=\sum_{v\in V_i}\E_{\psi\sim \nu_W}\left[X\cdot\prod_{y\in I_{W}}C_y(\psi)\right|\left.\vphantom{\prod_{I_{W}}}\psi_{s}=\phi\right].
\end{align*}
Thus we have completed the induction in either case.

Now we apply \cref{eq:neutralize-induction} with $s=0$. This gives the bound
\[\E_{\psi\sim\nu_1}[X]\leq\E_{\psi\sim\nu_2}\left[X\cdot\prod_{y\in  I_2\setminus I_1}C_y(\psi)\right].\]
Since $|N(PE_{\psi(f_y)}(G);G)\cap V_y|\geq\min\{\theta_y,\theta_y/\omega(y;\psi)\}$ we have the bound \[C_y(\psi)\leq 2\tfrac{|V_y|}{\theta_y}\max\{1,\omega(y;\psi)\}\leq 2\gamma\max\{1,\omega(y;\psi)\}\] for all $y\notin V_K$. Note that the bound also trivially holds for $y\in V_K$.

Recalling that $I_2\setminus I_1=J\setminus I_1$ is a set of size $t\geq 1$, we conclude that 
\[\prod_{y\in  J\setminus I_1}C_y(\psi)\leq2^t\gamma^t\prod_{y\in  J\setminus I_1}\max\{1,\omega(y;\psi)\}\leq 2^t\gamma^t\frac1t\sum_{y\in  J\setminus I_1}(1+\omega(y;\psi)^t).\]

Finally this gives
\begin{align*}
\E_{\psi\sim\nu_1}[X]
&\leq 2^t\gamma^t\E_{\psi\sim\nu_2}[X]+\frac1t\sum_{y\in  J\setminus I_1}\E_{\psi\sim\nu_2}[2^t\gamma^tX\cdot\omega(y;\psi)^t]\\
&\leq 2^t\gamma^t\E_{\psi\sim\nu_2}[X]+\frac1t\sum_{y\in  J\setminus I_1}\E_{\psi\sim\nu_2}\left[\frac12\left(2^{2t}\gamma^{2t}X^2+\omega(y;\psi)^{2t}\right)\right]\\
&= 2^t\gamma^t\E_{\psi\sim\nu_2}[X]+2^{2t-1}\gamma^{2t}\E_{\psi\sim\nu_2}[X^2]+\frac1{2t}\sum_{y\in  J\setminus I_1}\E_{\psi\sim\nu_2}\left[\omega(y;\psi)^{2t}\right].\qedhere
\end{align*}
\end{proof}

We now iterate this neutralizing procedure. The following proof is very similar to \cite[Proof of Lemma 4.5]{Lee17} applied to $H^{(1)}$.

\begin{lemma}[{cf. \cite[Lemma 4.5]{Lee17}}]
\label{lem:random-defect-bound}
Suppose we are in \cref{stp}. Assume that $|V_K|\geq 2|W_K|$. Let $\nu$ be the probability distribution on maps $\psi\colon V(H)\to V(G)$ induced by \cref{alg}. For all vertices $x\in V(H)$,
\[\E_{\psi\sim\nu}[\omega(x;\psi)^{2d}]\leq 2^{2d+1}\gamma^{2d}\mu_{4d}.\]
\end{lemma}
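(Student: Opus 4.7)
The plan is to prove the bound by downward induction on $i$, where $x\in W_i$. A single application of \cref{lem:neutralizing} with $J=f_x$ will not close the induction on its own, because its error term involves expectations taken under a more-neutralized distribution. So I will strengthen the claim to: for every $I\subseteq V(H)$, if $\nu_I$ denotes the distribution obtained from $\nu$ by neutralizing $I$, then $\E_{\psi\sim\nu_I}[\omega(x;\psi)^{2d}]\leq 2^{2d+1}\gamma^{2d}\mu_{4d}$; the lemma is the $I=\emptyset$ case. The strengthening lets the induction absorb expectations under any neutralization set, not merely under $\nu$.

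The base case $x\in W_K$ is immediate since $\omega(x;\psi)=0$ by convention. For the inductive step, let $x\in W_i$ with $i<K$ and split on whether $f_x\subseteq I$. If $f_x\subseteq I$, then under $\nu_I$ the images $\psi(f_x)$ are uniform and independent in their respective parts, so $\E_{\nu_I}[\omega(x;\psi)^{2d}]=\mu_{2d}(x)$; this is bounded by $\mu_{4d}$ via the pointwise inequality $\omega^{2d}\leq\omega^{4d}$, a consequence of $\omega_\theta$ taking values in $\{0\}\cup(1,+\infty]$.

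If $f_x\not\subseteq I$, I apply \cref{lem:neutralizing} with $I_1=I$, $J=f_x$, $I_2=I\cup f_x$, $X=\omega(x;\psi)^{2d}$, and $t=|f_x\setminus I|\leq d$. Under $\nu_{I\cup f_x}$ the images of $f_x$ are again uniform and independent, so $\E_{\nu_{I\cup f_x}}[X]=\mu_{2d}(x)$ and $\E_{\nu_{I\cup f_x}}[X^2]=\mu_{4d}(x)$. Using $\mu_{2d}(x)\leq\mu_{4d}$, the elementary bound $2^t\gamma^t\leq 2^{2t-1}\gamma^{2t}$, and $t\leq d$, the first two terms of \cref{lem:neutralizing} collapse to at most $2^{2d}\gamma^{2d}\mu_{4d}$. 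For the third term, each $y\in f_x\setminus I$ lies at level strictly greater than $i$, so the induction hypothesis bounds $\E_{\nu_{I\cup f_x}}[\omega(y;\psi)^{2d}]$ by $2^{2d+1}\gamma^{2d}\mu_{4d}$; using $\omega^{2t}\leq\omega^{2d}$ (again from $\omega\in\{0\}\cup(1,+\infty]$ and $t\leq d$), the sum of $t$ such terms divided by $2t$ contributes exactly $2^{2d}\gamma^{2d}\mu_{4d}$. Adding the two contributions reproduces the claimed $2^{2d+1}\gamma^{2d}\mu_{4d}$.

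The main obstacle will be verifying that the constants telescope precisely. The resulting halving recursion $C_i\leq\tfrac{1}{2}\cdot 2^{2d+1}\gamma^{2d}+\tfrac{1}{2}\max_{j>i}C_j$ has fixed point exactly $2^{2d+1}\gamma^{2d}$, but this works only because three things line up: the residual term in \cref{lem:neutralizing} carries $t$ summands divided by $2t$; the value set $\{0\}\cup(1,+\infty]$ of $\omega$ simultaneously gives $\mu_{2d}\leq\mu_{4d}$ and $\omega^{2t}\leq\omega^{2d}$ ``for free''; and $2^t\gamma^t\leq 2^{2t-1}\gamma^{2t}$ merges the first two Lemma terms cleanly. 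The bookkeeping below depends on all three facts holding simultaneously.
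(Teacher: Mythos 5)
Your proof is correct, and it takes a genuinely different route from the paper's. The paper proves \cref{lem:random-defect-bound} by constructing an explicit rooted tree whose nodes record which vertices have been neutralized at each stage, assigning weights $\sigma$ so that each level of the tree carries total weight $2^{-i}$, and proving a telescoping invariant by induction on tree depth. You instead strengthen the statement to hold uniformly over the neutralization set $I$ (i.e., bound $\E_{\nu_I}[\omega(x;\psi)^{2d}]$ for every $I$) and run a clean downward induction on the level $i$ with $x\in W_i$, using the fact --- built into \cref{stp} --- that every coordinate of $f_x$ lies in a strictly later part. This replaces the paper's weighted-tree bookkeeping with a one-line fixed-point check for the halving recursion, which is arguably tidier. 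Both proofs hinge on the same mechanism: a single application of \cref{lem:neutralizing} per vertex, the $\frac{1}{2t}$ normalization producing a geometric decay, and the fact that $\omega_\theta$ takes values in $\{0\}\cup(1,+\infty]$ so that moments are monotone in the exponent. A minor caveat worth flagging: when you write $\E_{\nu_{I\cup f_x}}[X]=\mu_{2d}(x)$, you are using that the entries of $f_x$ are distinct vertices, so their images under neutralization are independent and $\psi(f_x)$ is uniform on $\mathcal Q_x$; this is implicit in \cref{stp} (the padding consists of additional vertices) but deserves a word, as the paper's tree argument does not rely on it in the same direct way. Also note that the paper's tree construction would continue to work even if $f_x$ could contain vertices at earlier levels (termination follows from non-repetition of labels along paths), whereas your level-based induction structurally requires the monotonicity guaranteed by the setup --- not a flaw here, but a difference in generality worth keeping in mind.
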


\begin{proof}
Fix $x\in V(H)$. We construct a sequence of rooted trees $T_0\subseteq T_1\subseteq \cdots\subseteq T_m$ whose vertices are labelled by elements of $V(H)$. Let $T_0$ be the tree consisting of the single vertex $a$ labelled $x$. To construct $T_{i+1}$ from $T_i$ we do the following procedure. For each leaf $b$ of $T_i$, let $A(b)$ be the set of ancestors of $b$ in $T_i$, and let $I(b)=\bigcup_{c\in A(b)}f_c$. In this proof we use $f_c$ to refer to the set $f_y$ where $c\in T_i$ is labelled by the vertex $y\in V(H)$.  Then we add as children of $b$ new vertices labelled $f_b\setminus I(b)$. Call this set of added vertices $C(b)$.

Note that our trees may have multiple vertices with the same label. However, the process eventually terminates. Indeed, any path from the root contains distinct vertices (except that there may be a second vertex labelled $x$ on the path). Therefore the depth is at most $v(H)$. Call the final tree $T_m$.

We define a vertex-weighting $\sigma\colon V(T_m)\to[0,1]$ as follows. Define $\sigma(a)=1$ and whenever we add a new vertex which is a child of $b$, assign it the weight $\sigma(b)/(2|f_b\setminus I(b)|)$. Note that total weight of the new leaves in $T_i$ (that is, $V(T_i)\setminus V(T_{i-1})$) is $2^{-i}$, so the total weight of the vertices in any of the trees is less than 2.

Let $\nu_b$ be the probability distribution formed from $\nu$ by neutralizing $I(b)$. For notational convenience let $T_{-1}$ be the empty tree. We wish to prove that for all $i\geq 0$ the following holds
\begin{equation}
\label{eq:tree-neutral}
\E_{\psi\sim\nu}[\omega(a;\psi)^{2d}]\leq 2^{2d}\gamma^{2d}\mu_{4d}\sum_{b\in V(T_{i-1})} \sigma(b)+\sum_{b\in V(T_i)\setminus V(T_{i-1})}\sigma(b)\E_{\psi\sim \nu_b}[\omega(b;\psi)^{2d}].
\end{equation}
We will prove \cref{eq:tree-neutral} by induction on $i$. The base case $i=0$ is trivial since $I(a)=\emptyset$ so $\nu=\nu_a$. 

For $b\in V(T_{i+1})\setminus V(T_i)$, let $\tilde\nu_b$ be the probability distribution obtained from $\nu$ by neutralizing $I(b)\cup f_b$. Write $t_b=|f_b\setminus I(b)|$. By assumption $H$ has skeletal degeneracy at most $d$ so $t_b\leq d$. If $t_b\geq 1$ then by \cref{lem:neutralizing} (with $J=f_b$) we have
\begin{align*}
\E_{\psi\sim \nu_b}[\omega(b;\psi)^{2d}]
&\leq 2^{t_b}\gamma^{t_b}\E_{\psi\sim \tilde \nu_b}[\omega(b;\psi)^{2d}]+2^{2t_b-1}\gamma^{2t_b}\E_{\psi\sim\tilde \nu_b}[\omega(b;\psi)^{4d}]+\frac1{2t_b}\sum_{c\in f_b\setminus I(b)}\E_{\psi\sim \tilde\nu_b}[\omega(c;\psi)^{2d}]\\
&\leq 2^{2d}\gamma^{2d}\E_{\psi\sim\tilde \nu_b}[\omega(b;\psi)^{4d}]+\frac1{2t_b}\sum_{c\in f_b\setminus I(b)}\E_{\psi\sim \tilde\nu_b}[\omega(c;\psi)^{2d}].
\end{align*}
Since $f_b$ is neutralized by $\tilde\nu_b$ we have that the expectation that $\E_{\psi\sim\tilde\nu_b}[\omega(b;\psi)^{4d}]$ agrees with $\mu_{4d}$, defined in \eqref{eqn:def-mu}. Furthermore, it follows from the definitions that for each $c\in f_b\setminus I(b)$ we have $\tilde\nu_b=\nu_c$. Thus we have shown that 
\[\E_{\psi\sim \nu_b}[\omega(b;\psi)^{2d}]\leq 2^{2d}\gamma^{2d}\mu_{4d}+\frac1{2t_b}\sum_{c\in f_b\setminus I(b)}\E_{\psi\sim \nu_c}[\omega(c;\psi)^{2d}]
\]
whenever $t_b>0$.

If $t_b=0$, then $f_b\subseteq I(b)$. This means that all of $f_b$ is already neutralized by $\nu_b$ and so $\E_{\psi\sim\tilde\nu_b}[\omega(b;\psi)^{2d}]=\mu_{2d}\leq\mu_{4d}$. We abuse notation and use the above bound in either case and simply ignore the second term when $t_b=0$. 

Putting together these results with the inductive hypothesis, we see that 
\begin{align*}
\E_{\psi\sim\nu}[\omega(a;\psi)^{2d}]
&\leq 2^{2d}\gamma^{2d}\mu_{4d}\sum_{b\in V(T_{i-1})} \sigma(b)+\sum_{b\in V(T_i)\setminus V(T_{i-1})}\sigma(b)\E_{\psi\sim \nu_b}[\omega(b;\psi)^{2d}]\\
&\leq 2^{2d}\gamma^{2d}\mu_{4d}\sum_{b\in V(T_{i-1})} \sigma(b)\\&\qquad\qquad+\sum_{b\in V(T_i)\setminus V(T_{i-1})}\sigma(b)\left(2^{2d}\gamma^{2d}\mu_{4d}+\frac1{2t_b}\sum_{c\in f_b\setminus I(b)}\E_{\psi\sim\nu_c}[\omega(c;\psi)^{2d}]\right)\\
&= 2^{2d}\gamma^{2d}\mu_{4d}\sum_{b\in V(T_{i})} \sigma(b)+\sum_{c\in V(T_{i+1})\setminus V(T_i)} \sigma(c)\E_{\psi\sim\nu_c}[\omega(c;\psi)^{2d}].
\end{align*}
This completes the induction. Finally, \cref{eq:tree-neutral} with $i=m+1$ gives the desired result.
\end{proof}

\begin{theorem}[{cf. \cite[Theorem 4.2]{Lee17}}]
\label{thm:embedding}
Suppose we are in \cref{stp}. Assume that $|V_K|\geq 2|W_K|$, that $\theta_i\geq 2|W_i|$ for all $i<K$ and that $2^{2d+2}\gamma^{2d}\mu_{4d}\sum_{i<K}|W_i|/\theta_i<1$. Then there is an embedding of $H$ into $G$.
\end{theorem}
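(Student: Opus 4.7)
The plan is to run Algorithm~\ref{alg} and apply a union-bound-style argument to find a realization of the random map $\psi$ that satisfies the hypothesis of Lemma~\ref{lem:omega-embed} with $s = 2d$. The lemma tells us that it suffices to produce a single realization $\phi$ of $\psi$ with $\sum_{x \in W_i} \omega(x;\phi)^{2d} \leq \tfrac12\theta_i$ for every $i < K$. The role of Lemma~\ref{lem:random-defect-bound} is to bound the expected value of each individual $\omega(x;\psi)^{2d}$, and the role of the final hypothesis $2^{2d+2}\gamma^{2d}\mu_{4d}\sum_{i<K}|W_i|/\theta_i < 1$ is precisely to ensure that summing these bounds leaves enough slack for all $K - 1$ conditions to hold simultaneously.

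Explicitly, I would consider the random variable
\[ Z \;=\; \sum_{i < K} \frac{2}{\theta_i} \sum_{x \in W_i} \omega(x;\psi)^{2d}. \]
By linearity of expectation and Lemma~\ref{lem:random-defect-bound},
\[ \E_{\psi \sim \nu}[Z] \;\leq\; \sum_{i<K} \frac{2|W_i|}{\theta_i} \cdot 2^{2d+1}\gamma^{2d}\mu_{4d} \;=\; 2^{2d+2}\gamma^{2d}\mu_{4d} \sum_{i<K} \frac{|W_i|}{\theta_i} \;<\; 1, \]
where the final inequality is the hypothesis of the theorem. Hence there exists a deterministic map $\phi$ with $\Pr(\psi=\phi)>0$ and $Z(\phi) < 1$. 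Since each summand of $Z$ is nonnegative, this forces $\frac{2}{\theta_i}\sum_{x\in W_i}\omega(x;\phi)^{2d} < 1$, i.e., $\sum_{x\in W_i}\omega(x;\phi)^{2d} < \tfrac12\theta_i$, for every $i<K$.

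With this $\phi$ in hand, the remaining hypotheses of Lemma~\ref{lem:omega-embed} are directly verified: $\theta_i \geq 2|W_i|$ is assumed, and we just produced $\phi$ with the required moment bound for $s = 2d$. Applying the lemma yields that $\phi$ is an embedding of $H$ into $G$. The only subtlety is confirming that Lemma~\ref{lem:random-defect-bound} actually applies, which requires $|V_K| \geq 2|W_K|$ (already assumed) so that the initial random injection into $V_K$ is well-defined and that the neutralization bounds used in its proof are valid. I expect no real obstacle here: all of the heavy lifting was done in Lemmas~\ref{lem:omega-embed} and~\ref{lem:random-defect-bound}, and the present theorem is essentially a short first-moment deduction from them.
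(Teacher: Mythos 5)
Your proposal is correct and takes essentially the same approach as the paper: the paper bounds $\E\bigl[\sum_{x\in W_i}\omega(x;\psi)^{2d}\bigr]$ via \cref{lem:random-defect-bound}, applies Markov's inequality to each $i$, and union-bounds over $i<K$, whereas you fold the same computation into a single first-moment bound on the weighted sum $Z$; these are logically equivalent, and both then finish by applying \cref{lem:omega-embed} with $s=2d$.
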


\begin{proof}
By \cref{lem:random-defect-bound} for each $i<K$,
\[\E_{\psi\sim \nu}\left[\sum_{x\in W_i}\omega(x;\psi)^{2d}\right]\leq |W_i|2^{2d+1}\gamma^{2d}\mu_{4d}.\]
Thus by Markov's inequality
\[\Pr_{\psi\sim \nu}\left(\sum_{x\in W_i}\omega(x;\psi)^{2d}>\frac12\theta_i\right)<\frac{|W_i|}{\theta_i}2^{2d+2}\gamma^{2d}\mu_{4d}.\]

Therefore we see that by hypothesis, there is a deterministic map $\phi$ that satisfies $\Pr(\psi=\phi)>0$ and $\sum_{x\in W_i}\omega(x;\phi)^{2d}\leq\tfrac12\theta_i$ for all $i\in[K]$. By \cref{lem:omega-embed}, $\phi$ is an embedding of $H$ into $G$.
\end{proof}

\subsection{Putting it all together}
\label{sec:proof-of-main}

\subsubsection{Partitioning $H$}

Say that two vertices of a hypergraph $H$ are neighbors if they are adjacent in the 1-skeleton $H^{(1)}$.

\begin{lemma}
\label{lem:H-partition}
Let $H$ be an $n$-vertex, $k$-uniform, $k$-partite hypergraph with $\skel{H}\leq d$ and vertex partition $V(H)=W_1\sqcup \cdots \sqcup W_k$. There exists a refinement of the partition $\{W_i^{(j)}\}_{i\in [T], j\in [k]}$, with the following properties:
\begin{enumerate}[(i)]
\item $T\leq \log_2 n$;
    \item for all $i\in [T]$, $j\in [k]$, we have $|W_i^{(j)}|\leq 2^{-i+1}n$;
    \item for all $j\in [k]$, the set $\bigcup_{i\in [T]}W_i^{(j)} = W_j$; and
    \item for all $i\in [T]$, $j\in [k]$, each vertex in $W_i^{(j)}$ has at most $4d$ neighbors in $\bigcup_{i'\geq i,j'\in [k]}W_{i'}^{(j')}$.
\end{enumerate}
\end{lemma}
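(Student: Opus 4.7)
The plan is to construct the refined partition greedily by iteratively peeling off low-degree vertices from the $1$-skeleton $H^{(1)}$. Set $U_1 = V(H)$ and, at step $i \geq 1$, let $W_i \subseteq U_i$ be the set of vertices having at most $4d$ neighbors (in $H^{(1)}$) inside $U_i$; then put $U_{i+1} := U_i \setminus W_i$. The desired refined parts are obtained by intersecting with the original partition, $W_i^{(j)} := W_i \cap W_j$.

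The key observation is that $H^{(1)}[U_i]$ inherits $d$-degeneracy from $H^{(1)}$, so it has at most $d|U_i|$ edges and total degree at most $2d|U_i|$. In particular, at most $|U_i|/2$ of its vertices can have degree greater than $4d$, giving $|W_i| \geq |U_i|/2$ and hence $|U_{i+1}| \leq |U_i|/2$. Iterating this halving yields $|U_i| \leq 2^{-i+1}n$, so the process terminates in $T = O(\log n)$ steps. The precise bound $T \leq \log_2 n$ follows by noting that once $|U_i|$ drops below a small constant every remaining vertex trivially has at most $4d$ neighbors in $U_i$, so the last few iterations can be absorbed into a single layer.

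With this construction, the four properties are all immediate. Property (i) is the termination bound just discussed. Property (ii) follows from $|W_i^{(j)}| \leq |W_i| \leq |U_i| \leq 2^{-i+1}n$. Property (iii) holds because the peeling exhausts $V(H)$. Property (iv) is precisely the defining property of $W_i$: a vertex $v \in W_i^{(j)}$ has at most $4d$ neighbors inside $U_i$, and by construction $U_i = \bigcup_{i' \geq i} W_{i'} = \bigcup_{i' \geq i,\, j' \in [k]} W_{i'}^{(j')}$.

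I do not expect any real obstacle: the argument is a standard dyadic peeling leveraging the average-degree bound for $d$-degenerate graphs, and the only minor subtlety is the off-by-one bookkeeping needed to get $T \leq \log_2 n$ rather than $T \leq \log_2 n + 1$.
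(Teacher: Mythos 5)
Your proof is essentially the paper's proof: the paper also peels $V(H)$ by repeatedly extracting the low-degree vertices of $H^{(1)}[U_i]$ (setting $U_{i+1}$ to be the vertices of degree at least $4d$ in $H^{(1)}[U_i]$, whereas you keep those of degree at most $4d$ in $W_i$ --- an immaterial boundary choice), and both derive $|U_{i+1}| \leq |U_i|/2$ from the fact that $d$-degeneracy gives $e(H^{(1)}[U_i]) \leq d|U_i|$. Your explicit remark about absorbing the final few small layers into one to land at $T\leq\log_2 n$ is a reasonable way to patch the off-by-one that the paper glosses over; note that this absorption does preserve property (ii) since the merged layer has size $|U_T|\leq 2^{-T+1}n$, and preserves (iv) once $|U_T|\leq 4d$, which happens by $T\approx\log_2(n/d)\leq\log_2 n$.
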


\begin{remark}
\cref{lem:H-partition} follows by applying \cite[Lemma 3.1]{Lee17} to $H^{(1)}$. For completeness we include the proof here as well.
\end{remark}

\begin{proof}
Let $U_1 = V(H)$, and recursively define $U_{i+1}\subseteq U_i$ to be the set of vertices having degree at least $4d$ in the induced subgraph $H^{(1)}[U_i]$. Finally define $V_i=U_i\setminus U_{i+1}$ and $W_i^{(j)} = V_i\cap W_j$.

By assumption $H^{(1)}$ is $d$-degenerate. Hence there are at most $d|U_i|$ edges in $H^{(1)}[U_i]$, so $4d|U_{i+1}|\le 2d|U_i|$. Equivalently, $2|U_{i+1}|\le |U_i|$. Therefore $U_{T+1} = \emptyset$ for some $T\le \log_2 n$, which gives (i). Because $2|U_{i+1}|\le |U_i|$, we have $|W_i^{(j)}|\le |U_i|\le 2^{-i+1}|U_1| = 2^{-j+1}n$, which gives (ii).

Meanwhile we have $\bigcup_{i\in [T]}V_i = U_1\setminus U_{T+1} = V(H)$ which immediately implies (iii).  Finally, $W_i^{(j)} \subseteq U_i\setminus U_{i+1}$, and $\bigcup_{i'\geq i, j'\in [k]}W_{i'}^{(j')} = U_i$, so all vertices in $W_i^{(j)}$ have degree less than $4d$ in $H^{(1)}[U_{i}]$, thereby establishing (iv) as desired.
\end{proof}

\subsubsection{Partitioning $G$}

In the proof of \cref{thm:lin-turan}, after applying \cref{cor:simul-pruning}, we will construct a subhypergraph of $G$ in which we have good control of the $\theta$-defect. \cref{lem:G-partition} asserts that such a subhypergraph exists. To prove \cref{lem:G-partition} we rely on the next lemma, which quantifies the way in which $\mu_{\theta,t}$ controls the worst-case behavior of the $\theta$-defect. (Note that $\mu_{\theta,t}$ controls the average-case behavior of the defect by definition.)

\begin{lemma}
\label{lem:lower-bound-defect}
Let $G$ be a $k$-uniform $k$-partite hypergraph on $V_1\sqcup \cdots \sqcup V_k$. For any $i, \theta, t$ and $\mc Q$ a collection of subsets of $V(G)\setminus V_i$, if $\mu_{\theta,t}(\mathcal{Q},V_i;G)>0$ then every tuple $Q\in \mc Q$ satisfies
\[|N(PE_Q(G); G) \cap V_i| \geq \frac{\theta}{(|\mc Q|\cdot \mu_{\theta, t}(\mc Q, V_i; G))^{1/t}}.\]
If $\mu_{\theta,t}(\mathcal{Q},V_i;G)=0$ then every tuple $Q\in\mathcal{Q}$ satisfies $|N(PE_Q(G); G) \cap V_i| \geq\theta$.
\end{lemma}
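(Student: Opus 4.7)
The plan is to unpack the definitions of $\omega_\theta$ and $\mu_{\theta,t}$ directly, and split into two cases according to whether $\mu_{\theta,t}(\mc Q, V_i; G)$ equals $0$ or not. Throughout, for each $Q \in \mc Q$ I will write $n_Q = |N(PE_Q(G); G) \cap V_i|$. A preliminary observation is that since $\mu_{\theta,t}(\mc Q, V_i; G)$ is assumed finite (being either $0$ or a finite positive real), no individual summand $\omega_\theta(Q, V_i; G)$ can equal $+\infty$; hence, from the definition of $\omega_\theta$, we automatically have $n_Q > 0$ for every $Q \in \mc Q$.

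The case $\mu_{\theta,t}(\mc Q, V_i; G) = 0$ is immediate: since every summand in the definition of $\mu_{\theta, t}$ is nonnegative, each $\omega_\theta(Q, V_i; G) = 0$, which by the definition of $\omega_\theta$ (combined with $n_Q > 0$) forces $n_Q \geq \theta$, giving the second conclusion of the lemma.

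For the main case $\mu_{\theta,t}(\mc Q, V_i; G) > 0$, I would start from the trivial bound
\[\omega_\theta(Q, V_i; G)^t \leq \sum_{Q' \in \mc Q} \omega_\theta(Q', V_i; G)^t = |\mc Q| \cdot \mu_{\theta,t}(\mc Q, V_i; G),\]
valid for every $Q \in \mc Q$ since all summands are nonnegative. Whenever $n_Q < \theta$ we have $\omega_\theta(Q, V_i; G) = \theta / n_Q$, and the above inequality rearranges directly into the required bound $n_Q \geq \theta/(|\mc Q|\mu_{\theta,t}(\mc Q, V_i; G))^{1/t}$.

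The only mildly subtle point — and the one I would flag as the place where care is needed — is the subcase $n_Q \geq \theta$, in which $\omega_\theta(Q, V_i; G) = 0$ and so the inequality above carries no information about $n_Q$. Here I would exploit the hypothesis $\mu_{\theta,t}(\mc Q, V_i; G) > 0$ to produce a witness $Q^* \in \mc Q$ with $\omega_\theta(Q^*, V_i; G) > 0$, equivalently $n_{Q^*} < \theta$. The previous subcase applied to $Q^*$ yields $n_{Q^*} \geq \theta/(|\mc Q|\mu_{\theta,t}(\mc Q, V_i; G))^{1/t}$, and combining this with $n_Q \geq \theta > n_{Q^*}$ transfers the same lower bound to $n_Q$, completing the argument.
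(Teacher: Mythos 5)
Your proposal is correct and follows essentially the same route as the paper: rearrange the trivial inequality $\omega_\theta(Q,V_i;G)^t \leq \sum_{Q'\in\mc Q}\omega_\theta(Q',V_i;G)^t = |\mc Q|\,\mu_{\theta,t}(\mc Q,V_i;G)$, and separately check that the subcase $n_Q \geq \theta$ causes no trouble. The paper handles that last subcase by first observing that $\omega_\theta(x)\neq 0 \Rightarrow \omega_\theta(x)>1$, hence $|\mc Q|\,\mu_{\theta,t}\geq 1$ and the right-hand side of the bound is at most $\theta$; your witness-$Q^*$ device establishes the same fact by a slightly more roundabout route, and the paper phrases the whole thing as a single proof by contradiction rather than a case split, but these are cosmetic differences. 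The one thing I would tighten is your preliminary observation: the lemma does not assume $\mu_{\theta,t}(\mc Q,V_i;G)$ is finite, and it can equal $+\infty$ (precisely when some $Q\in\mc Q$ has $|N(PE_Q(G);G)\cap V_i|=0$). That case must be dispatched up front (the bound is then vacuous, since the denominator is $+\infty$), after which your reduction to $n_Q>0$ for all $Q$ and the rest of the argument go through as written.
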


\begin{proof}
If $\mu_{\theta,t}(\mathcal{Q},V_i;G)=0$ then, by definition of $\mu_{\theta,t}$ and $\omega_\theta$ (see \eqref{eq:omega_theta set def}, \eqref{eq:omega_theta function def}, and \eqref{eqn:def-mu}) we have $|N(PE_Q(G); G) \cap V_i| \geq\theta$ for every $Q \in \mc Q$.

Henceforth, we assume that $\mu_{\theta,t}(\mathcal{Q},V_i;G)>0$. By \eqref{eq:omega_theta function def}, if $\omega_\theta(x)\neq 0$ then $\omega_\theta(x)>1$. Since $\mu_{\theta,t}(\mathcal{Q},V_i;G)>0$, by \eqref{eqn:def-mu} we actually have $|\mathcal{Q}|\cdot\mu_{\theta,t}(\mathcal{Q},V_i;G)\geq 1$.

Let $Q\in\mathcal{Q}$. Suppose, for a contradiction, that 
\[|N(PE_Q(G); G) \cap V_i|< \frac{\theta}{(|\mc Q|\cdot \mu_{\theta, t}(\mc Q, V_i; G))^{1/t}}<\theta\]
(with the last inequality following from the discussion above). Then by \cref{eqn:def-mu} we have
\[|\mathcal{Q}|\cdot\mu_{\theta,t}(\mathcal{Q},V_i;G)\geq\omega_\theta(|N(PE_Q(G); G) \cap V_i|)^t=\left(\frac{\theta}{|N(PE_Q(G); G) \cap V_i|}\right)^t.\]
This contradicts our assumption, proving the desired bound on the size of the common neighborhood of $Q$. 
\end{proof}

\begin{lemma}[{cf.\ \cite[Lemmas 5.3 and 6.2]{Lee17}}]
\label{lem:G-partition}
There exists an absolute constant $C$ such that the following holds. Fix any $T,d,t, k, m, \theta\in \mb N$ and $\epsilon, \epsilon'\in (0, 1)$ that satisfy $k\geq 2$, $t\geq 4d$, $m \geq \left(\frac{k^dTt}{\epsilon^d\epsilon'}\right)^{C}$ and $\theta\geq \epsilon m$. Fix constants $p_1,p_2,\ldots,p_T\geq m^{-1/(10d)}$ such that $\sum_i p_i\leq 1$. Let $\theta_i=p_i\theta/4$ for each $i\in [T]$. Let $G$ be a $k$-uniform $k$-partite hypergraph on $A_1\sqcup\cdots\sqcup A_k$. Suppose that $\epsilon m\leq |A_j|\leq m$ and $\mu_{\theta,t}(A_{-j}^d,A_j;G)<\tfrac12$ for each $j\in [T]$. Then there exist disjoint sets $V_i^{(j)}$ for $i\in[T]$ and $j\in[k]$ such that
\begin{enumerate}[(i)]
\item\label{itm:Vij cardinalities} $V_i^{(j)}\subseteq A_j$ and $\tfrac14p_i|A_j|\leq |V_i^{(j)}|\leq p_i|A_j|$ for all $i\in[T]$ and $j\in[k]$,
\item\label{itm:Vij average defect} for all $i\in[T]$, $j\in [k]$ and $i_1,\ldots,i_d\in[T]$, $j_{1},\ldots,j_{d}\in[k]\setminus\{j\}$,
\[\mu_{\theta_i,t}\left(\prod_{a=1}^{d}V_{i_a}^{(j_a)},V_i^{(j)};G\right)\leq \max\left\{\epsilon',8\epsilon^{-d}k^d\mu_{\theta,t}(A_{-j}^d,A_j;G)\right\}.\]
\end{enumerate}
\end{lemma}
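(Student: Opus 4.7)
The plan is to construct the partition by independent random sampling: for each vertex $v\in A_j$ and each $i\in[T]$, place $v$ in $V_i^{(j)}$ with probability $p_i$, leaving $v$ unassigned with probability $1-\sum_i p_i\ge 0$. The size condition (i) is then routine: each $|V_i^{(j)}|$ is a binomial with mean $p_i|A_j|\ge\epsilon p_im\ge\epsilon m^{1-1/(10d)}$, which is polynomially large in $m$. Standard Chernoff bounds together with a union bound over the $kT$ pairs $(i,j)$ show that all $|V_i^{(j)}|\in[\tfrac14 p_i|A_j|,p_i|A_j|]$ simultaneously with probability $1-o(1)$ (the $\tfrac14$--$1$ window giving ample slack).

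For the defect condition (ii), fix indices $(i,j,i_1,\ldots,i_d,j_1,\ldots,j_d)$ with each $j_a\ne j$. For $Q=(v_1,\ldots,v_d)\in\prod_a A_{j_a}$, let $\alpha_Q=|N(PE_Q(G);G)\cap A_j|$ and $Y_Q=|N(PE_Q(G);G)\cap V_i^{(j)}|$. The random choices inside $A_j$ are independent of those inside $\bigcup_{j'\ne j}A_{j'}$, so conditional on $Q\in\prod_a V_{i_a}^{(j_a)}$ we still have $Y_Q\sim\mathrm{Bin}(\alpha_Q,p_i)$. Heuristically $Y_Q\approx p_i\alpha_Q$, giving
\[\omega_{\theta_i}(Y_Q)=\frac{p_i\theta/4}{Y_Q}\approx\frac{\omega_\theta(\alpha_Q)}{4},\]
i.e.\ sampling shrinks the defect by a constant factor, and its $t$th power by $4^{-t}$.

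A Chernoff calculation makes this rigorous: $\E\bigl[\omega_{\theta_i}(Y_Q)^t\mathbf{1}\{Y_Q>0\}\bigr]\le 4^{d-t}\omega_\theta(\alpha_Q)^t+\xi$, with $\xi$ exponentially small in $p_i\alpha_Q$. The event $\{Y_Q=0\}$ (which would contribute $+\infty$) is handled via \cref{lem:lower-bound-defect}: the hypothesis $\mu_{\theta,t}(A_{-j}^d,A_j;G)<\tfrac12$ combined with $t\ge 4d$ forces $\alpha_Q\ge\theta\cdot 2^{1/t}/m^{d/t}\ge\epsilon m^{3/4}$ for every $Q$, so $\Pr(Y_Q=0)\le(1-p_i)^{\epsilon m^{3/4}}\le\exp(-\Omega(m^{1/2}))$ and a union bound over the at most $(km)^d$ tuples removes these bad events. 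Summing the per-tuple bound over $Q$, multiplying by $\prod_a p_{i_a}$ (the probability that $Q\in\prod_a V_{i_a}^{(j_a)}$), dividing by $|\prod_a V_{i_a}^{(j_a)}|\ge 4^{-d}\prod_a p_{i_a}(\epsilon m)^d$, and invoking $\sum_{Q\in A_{-j}^d}\omega_\theta(\alpha_Q)^t\le\mu_\theta(km)^d$ yields
\[\E\bigl[\mu_{\theta_i,t}(\textstyle\prod_a V_{i_a}^{(j_a)},V_i^{(j)};G)\bigr]\le 4^{d-t}(4k/\epsilon)^d\mu_\theta+O\!\left(\xi(4k/\epsilon)^d\right).\]

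The main obstacle is passing from this expected bound to an almost-sure bound holding simultaneously across all $\le T^{d+1}k^{d+1}$ index choices (on top of the size and $\{Y_Q=0\}$ events). A naive first-moment Markov saves only a factor $4^{t-d}/8$, which is insufficient for the union bound once $Tk$ grows. To close this gap I would instead apply Markov to an $s$th moment of the defect sum with $s=\Theta(d\log(Tk))$; the per-tuple tail estimate is sharp enough that repeating the same Chernoff calculation at the higher moment preserves the $4^{-st}$ scaling (with the $t$th moment heuristic $\omega_{\theta_i}(Y_Q)^{st}\approx 4^{-st}\omega_\theta(\alpha_Q)^{st}$), driving the per-index failure probability well below $(Tk)^{-(d+2)}$. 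The polynomial lower bound $m\ge(k^dTt/(\epsilon^d\epsilon'))^C$ simultaneously forces $\xi$ to be small enough that the error term is at most $\epsilon'/2$, so with positive probability all size and defect conditions hold, yielding the stated $\max\{\epsilon',8\epsilon^{-d}k^d\mu_\theta\}$ bound.
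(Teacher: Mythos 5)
Your overall strategy (random independent partition into parts of size $\approx p_i|A_j|$, control sizes by Chernoff, control defects by moment bounds) matches the skeleton of the paper's argument, and your per-tuple first-moment computation is essentially correct. However, there is a genuine gap at the step you yourself flag as ``the main obstacle'': passing from the expected bound to a high-probability bound. Higher-moment Markov does not close this gap, because the random variable
\[
S \;=\; \sum_{Q\in\prod_a V_{i_a}^{(j_a)}} \omega_{\theta_i}\bigl(\lvert N(PE_Q(G);G)\cap V_i^{(j)}\rvert\bigr)^t
\]
need not concentrate at all under the lemma's hypotheses. The hypothesis $\mu_{\theta,t}(A_{-j}^d,A_j;G)<\tfrac12$ allows the entire defect mass $\sum_Q\omega_\theta(\alpha_Q)^t$ to be concentrated on the $d$-tuples $Q$ containing a single ``heavy'' vertex $v\in A_{-j}$ (or even on a single tuple $Q_0$), with $\omega_\theta(\alpha_{Q_0})^t$ on the order of $(km)^d/2$. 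For the fixed index tuple $(i_1,\dots,i_d)$ matching the parts of $Q_0$, the surviving defect sum $S$ is then essentially $\omega_{\theta_{i}}(Y_{Q_0})^t\,\mathbf{1}\{Q_0\in\prod_a V_{i_a}^{(j_a)}\}$: it equals $0$ with probability $1-\prod_a p_{i_a}$ and is of order $(km)^d/(4^t)$ with probability $\prod_a p_{i_a}$. After dividing by $|\mathcal Q|\approx\prod_a p_{i_a}(\epsilon m)^d$, the ratio $\mu_{\theta_i,t}$ exceeds $8\epsilon^{-d}k^d\mu_{\theta,t}$ whenever $\prod_a p_{i_a}$ is small and $m$ is large (which the hypotheses permit). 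In this regime the $s$-th moment satisfies $\E[\mu^s]\approx\rho\cdot(M/\rho)^s$ with $\rho=\prod_a p_{i_a}$ and $M/(\rho\lambda)>1$ for the target threshold $\lambda$, so Markov at order $s$ gives a bound that \emph{grows} with $s$; no choice of moment helps.

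The paper circumvents this by first removing, from each $A_j$, the vertices $v$ with $\sum_{Q:v\in Q}\omega_\theta(Q,A_i;G)^t$ exceeding $|A_{-i}|^{d-5/8}$ (the sets $R_i$), forming smaller parts $B_j$. This pre-processing enforces a bounded-differences condition on the defect sum as a function of the vertex-to-part assignment, which is then exploited via McDiarmid's inequality (\cref{thm:mcdiarmid}); the paper also decouples the shrinkage of common neighborhoods ($E_1$, a uniform lower bound $|N\cap V_i^{(j)}|\ge\tfrac{q_i}{2}|N\cap B_j|$) from the survival of $Q$'s ($E_2'$). Your proposal omits the removal step entirely, so the relevant polynomial in the indicator variables has no bounded-influence structure, and neither McDiarmid nor high-moment Markov can be applied. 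The heavy-vertex removal is the key missing idea here, and without it the construction you propose (randomly partitioning all of $A_j$) can fail with probability bounded away from $0$.
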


We will prove the lemma by giving a randomized construction for the sets $V_i^{(j)}$ and then showing that they satisfy \cref{itm:Vij cardinalities,itm:Vij average defect} with positive probability. A na\"ive attempt at such a construction might be to take $V_i^{(j)}$ as a uniformly random subset of $A_j$ of size $p_i|A_j|$ (in such a way that the sets are disjoint) and then attempt to prove that \cref{itm:Vij average defect} holds with positive probability. Indeed, this is almost the approach we use. However, in order to prove \cref{itm:Vij average defect} we rely on McDiarmid's inequality. To apply this we would require the average defect to satisfy a bounded differences condition, which in fact fails in general. To rectify this, we precede the random construction by removing vertices that might violate our desired bounded differences condition.

We first state the versions of Hoeffding's inequality and McDiarmid's inequality that we use.

\begin{theorem}[{Hoeffding's inequality (see, e.g., \cite[Theorem 2.2.6]{Ver18}}]
\label{thm:hoeffding}
Let $X_1,\ldots,X_N$ be independent $\{0,1\}$-valued random variables. Then for any $t\geq 0$,
\[\Pr\left(\sum_{i=1}^N (X_i-\E[X_i])\geq t\right)\leq\exp\left(-\frac{2t^2}{N}\right).\]
\end{theorem}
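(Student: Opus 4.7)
The plan is to prove Hoeffding's inequality by the exponential moment (Chernoff) method, combined with Hoeffding's lemma bounding the moment generating function of a bounded, centered random variable. First, I would set $Y_i = X_i - \E X_i$ and $S = \sum_{i=1}^N Y_i$. For any $\lambda > 0$, Markov's inequality applied to $e^{\lambda S}$ gives
\[\Pr(S \geq t) \leq e^{-\lambda t}\,\E[e^{\lambda S}] = e^{-\lambda t}\prod_{i=1}^N \E[e^{\lambda Y_i}],\]
where the factorization uses independence of the $X_i$.

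Next I would establish Hoeffding's lemma: for any random variable $Y$ with $\E Y = 0$ and $Y \in [a,b]$ almost surely, $\E[e^{\lambda Y}] \leq \exp\bigl(\lambda^2(b-a)^2/8\bigr)$. The standard argument uses convexity of $x \mapsto e^{\lambda x}$ on $[a,b]$ to get the pointwise bound $e^{\lambda y} \leq \frac{b-y}{b-a}e^{\lambda a} + \frac{y-a}{b-a}e^{\lambda b}$; taking expectations and using $\E Y = 0$ yields $\E[e^{\lambda Y}] \leq \tfrac{b\,e^{\lambda a} - a\,e^{\lambda b}}{b-a}$. Reparametrizing via $p = -a/(b-a) \in [0,1]$ and $h = \lambda(b-a)$, this reduces to showing $L(h) \leq h^2/8$ for $L(h) := -ph + \log(1-p+pe^h)$. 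I would verify $L(0) = 0$, $L'(0) = 0$, and compute $L''(h) = q(1-q)$ where $q = pe^h/(1-p+pe^h) \in (0,1)$; since $q(1-q) \leq 1/4$, Taylor's theorem with Lagrange remainder at $h=0$ gives $L(h) \leq h^2/8$, as desired.

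Applying Hoeffding's lemma to each $Y_i \in [-\E X_i,\, 1-\E X_i]$ (so $b-a = 1$), we obtain $\E[e^{\lambda Y_i}] \leq e^{\lambda^2/8}$. Combining with the Chernoff step yields
\[\Pr(S \geq t) \leq \exp\!\Bigl(-\lambda t + \tfrac{N\lambda^2}{8}\Bigr).\]
Optimizing the right-hand side over $\lambda > 0$ (setting derivative to zero gives $\lambda = 4t/N$) produces exactly $\exp(-2t^2/N)$, which is the claim.

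The main obstacle is securing the sharp constant $1/8$ in Hoeffding's lemma; any looser bound on $L''$ would propagate to a weaker exponent in the final inequality. The crucial ingredient is recognizing $L''(h)$ as $q(1-q)$ for a probability $q$, so that the tight AM--GM bound $q(1-q) \leq 1/4$ applies uniformly in $h$, combined with the fact that $L$ vanishes to first order at the origin so the quadratic Taylor remainder controls $L$ globally.
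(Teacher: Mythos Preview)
Your proof is correct and follows the standard Chernoff--Hoeffding route. The paper, however, does not prove this statement at all: it is quoted as a known result with a reference to Vershynin's textbook, so there is no in-paper proof to compare against. Your argument is exactly the textbook proof one would find at that citation.
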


\begin{theorem}[{McDiarmid's inequality (see, e.g., \cite[Theorem 2.9.1]{Ver18}}]
\label{thm:mcdiarmid}
Let $\cS$ be a finite set and let $X_1,\ldots,X_N$ be independent $\cS$-valued random variables. Let $f\colon \cS^N\to \mathbb{R}$ be a function such that $|f(X)-f(X')|\leq c$ where $X,X'\in \cS^N$ are arbitrary $N$-tuples that agree on $N-1$ coordinates. Then for any $t\geq 0$,
\[\Pr\left(f(X_1,\ldots,X_n)-\E[f(X_1,\ldots,X_n)]\geq t\right)\leq\exp\left(-\frac{2t^2}{c^2N}\right).\]
\end{theorem}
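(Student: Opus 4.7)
My plan is to prove McDiarmid's inequality via the classical Doob martingale argument combined with Hoeffding's lemma. First, I would define the Doob martingale $Y_i = \E[f(X_1, \ldots, X_N) \mid X_1, \ldots, X_i]$ for $0 \le i \le N$, so that $Y_0 = \E[f(X_1, \ldots, X_N)]$ and $Y_N = f(X_1, \ldots, X_N)$. Writing $D_i = Y_i - Y_{i-1}$ for the martingale differences, the result will follow by estimating $\E[e^{\lambda(Y_N - Y_0)}] = \E[\prod_{i=1}^N e^{\lambda D_i}]$, applying the Chernoff bound, and optimizing in $\lambda > 0$.

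The key step is to show that, conditional on $X_1, \ldots, X_{i-1}$, the variable $D_i$ is mean zero and lies in an interval of length at most $c$. For this I would introduce the deterministic function $g_i(x_1, \ldots, x_i) = \E[f(x_1, \ldots, x_i, X_{i+1}, \ldots, X_N)]$, which is well-defined because the $X_j$ are independent, and observe that $Y_i = g_i(X_1, \ldots, X_i)$. For any two values $x_i, x_i' \in \cS$, the bounded differences hypothesis on $f$ passes through the expectation (pointwise in $X_{i+1}, \ldots, X_N$) to give $|g_i(x_1, \ldots, x_{i-1}, x_i) - g_i(x_1, \ldots, x_{i-1}, x_i')| \le c$. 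Since $Y_{i-1}$ is $\sigma(X_1, \ldots, X_{i-1})$-measurable, the conditional range of $D_i$ given $X_1, \ldots, X_{i-1}$ is at most $c$, and $\E[D_i \mid X_1, \ldots, X_{i-1}] = 0$ by the tower law.

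At this point I would invoke Hoeffding's lemma, that a mean-zero random variable supported in an interval of length $c$ has moment generating function bounded by $\exp(\lambda^2 c^2 / 8)$. Applying this conditionally yields $\E[e^{\lambda D_i} \mid X_1, \ldots, X_{i-1}] \le \exp(\lambda^2 c^2 / 8)$, and peeling off conditional expectations from the inside using the tower property gives $\E[e^{\lambda(Y_N - Y_0)}] \le \exp(N \lambda^2 c^2 / 8)$. Markov's inequality then produces $\Pr(Y_N - Y_0 \ge t) \le \exp(N \lambda^2 c^2 / 8 - \lambda t)$, and choosing $\lambda = 4t/(Nc^2)$ yields the stated bound $\exp(-2t^2/(Nc^2))$.

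The only real obstacle is Hoeffding's lemma itself, which I would cite as standard; its proof uses the convexity of $t \mapsto e^{\lambda t}$ on $[a, a+c]$ to linearly bound the MGF of a mean-zero variable supported there, followed by a second-derivative (Taylor) estimate on the resulting log-MGF. The remainder is routine manipulation of conditional expectations, and the constant $2$ in the exponent emerges precisely from the $1/8$ in Hoeffding's lemma after the Chernoff optimization.
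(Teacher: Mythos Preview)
Your proof is correct and is the standard Doob-martingale/Hoeffding's-lemma argument. Note, however, that the paper does not actually prove this statement: it is quoted as a known result with a reference to Vershynin's book, so there is no ``paper's own proof'' to compare against. Your write-up supplies exactly the classical proof one would find in that reference.
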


\begin{proof}[Proof of \cref{lem:G-partition}]
For each $j\in[k]$ we define $B_j\subseteq A_j$ by removing the vertices that contribute too much to any $\mu_{\theta,t}(A_{-i}^d,A_i;G)$. Specifically, for each $i\in [k]$ let $R_i\subseteq A_{-i}$ be the set of vertices $v\in A_{-i}$ such that
\[\sum_{Q:v\in Q\in A_{-i}^d}\omega_\theta (Q,A_i;G)^t\geq|A_{-i}|^{d-5/8}.\]
By definition (see\ \eqref{eqn:def-mu})
\[
\mu_{\theta,t}(A_{-i}^d,A_i;G) = \frac{1}{d|A_{-i}^d|}\sum_{v \in A_{-i}^d} \sum_{{Q \in A_{-i}^d : v \in Q}}\omega_\theta (Q,A_i;G)^t.
\]
Hence
\[|R_i|\cdot|A_{-i}|^{d-5/8}\leq d|A_{-i}|^d\mu_{\theta,t}(A_{-i}^d,A_i;G),\]
so, applying the lemma's assumptions, $|R_i|\leq dkm^{5/8}$ for all $i\in [k]$.

Next, for each $j\in[k]$ define $B_j=A_j\setminus \bigcup_{i\neq j}R_i$. Fix some $j\in[k]$. By the bound above we have $|B_j|\geq |A_j|-dk^2m^{5/8}$. By our assumption on $m$ (and taking $C$ sufficiently large) we have $m^{3/8}\geq \frac{t^2k^2}{\epsilon} \geq\frac{dk^2}{\epsilon}(1-2^{-\frac{1}{2d}})^{-1}$. Using the additional assumption that $|A_j| \geq \varepsilon m$ we conclude that
\begin{equation}\label{eq:B_j lower bound}
|B_j| \geq |A_j| \left( 1 - dk^2 \frac{m^{5/8}}{|A_j|} \right) \geq  |A_j| \left( 1 - \frac{dk^2}{\varepsilon m^{3/8}} \right) \geq 2^{-1/(2d)}|A_j|.
\end{equation}
Similarly, for every $Q \in A_{-j}^d$ we have $|N(PE_Q(G);G)\cap B_j|\geq |N(PE_Q(G);G)\cap A_j|-dk^2m^{5/8}$. By \cref{lem:lower-bound-defect}, the assumption $\mu_{\theta,t}(A^d_{-j},A_j;G)< 1/2$, and the assumption $t\geq 4d$ we have $|N(PE_Q(G);G)\cap A_j|\geq\theta|A_{-j}|^{-d/t}\geq \theta|A_{-j}|^{-1/4} \geq \epsilon k^{-1/4} m^{3/4}$. (If $\mu_{\theta,t}(A^d_{-j},A_j;G)=0$ then we have the stronger bound $|N(PE_Q(G);G)\cap A_j|\geq\theta$.) Again by our assumption on $m$, taking $C$ sufficiently large, we have $m^{1/8} \geq \frac{t^2k^{9/4}}{\epsilon} \geq \frac{dk^{9/4}}{\epsilon}(1-2^{- 1/(2t)})^{-1}$ which implies that $|N(PE_Q(G);G)\cap B_j|\geq 2^{-1/(2t)}|N(PE_Q(G);G)\cap A_j|$. Thus
\begin{equation}\label{eq:mu B_j upper bound}
\begin{split}
\mu_{\theta,t}(B_{-j}^d,B_j;G)
&=\frac1{|B_{-j}|^d}\sum_{Q\in B_{-j}^d}\omega_\theta(|N(PE_Q(G);G)\cap B_j|)^t\\
&\leq \frac{2^{1/2}}{|B_{-j}|^d}\sum_{Q\in B_{-j}^d}\omega_\theta(|N(PE_Q(G);G)\cap A_j|)^t\\
&\leq \frac{2}{|A_{-j}|^d}\sum_{Q\in B_{-j}^d}\omega_\theta(|N(PE_Q(G);G)\cap A_j|)^t\\
&\leq 2\mu_{\theta,t}(A_{-j}^d,A_j;G).
\end{split}
\end{equation}

We also obtain the lower bound
\begin{equation}\label{eq:NB_j lower bound}
|N(PE_Q(G);G)\cap B_j| \geq 2^{-1/(2t)} \epsilon k^{-1/4}m^{3/4} \geq \varepsilon k^{-1/4}m^{3/4}\!/2.
\end{equation}

Finally, for every vertex $v \in B_{-j}$, applying similar reasoning and the definition of $B_{-j}$ we have
\begin{equation}\label{eq:omega sum over v}
\sum_{Q:v\in Q\in B_{-j}^d}\omega_\theta (Q,B_j;G)^t\leq2^{1/2}\sum_{Q:v\in Q\in A_{-j}^d}\omega_\theta (Q,A_j;G)^t\leq2^{1/2}|A_{-j}|^{d-5/8}\leq 2|B_{-j}|^{d-5/8}.
\end{equation}

We now give a randomized construction for the desired sets $V_i^{(j)}$, and show that \cref{itm:Vij cardinalities,itm:Vij average defect} are satisfied with positive probability. For each $j \in [k], i \in [T]$, and vertex $v \in B_j$ we assign $v$ to $V_i^{(j)}$ with probability $q_i = p_i/2$ (so that $v$ is assigned to at most one set and to no set with probability $1-\sum_i q_i$). We make these assignments independently for each vertex. In this way we obtain the sets $\{V_i^{(j)}\}_{j\in[k],i\in[T]}$ which by construction are disjoint and also satisfy $V_i^{(j)} \subseteq A_j$ for every $i$ and $j$.

Let $E_1$ be the event that for all $i\in[T]$, for all $j\in[k]$, and for all $Q\in B_{-j}^d$ we have
\[|N(PE_Q(G);G)\cap V_i^{(j)}|\geq \frac{q_i}2|N(PE_Q(G);G)\cap B_j|.\]
Let $E_2$ be the event that for all $i\in[T]$, $j\in [k]$ and $i_1,\ldots,i_d\in[T]$, $j_{1},\ldots,j_{d}\in[k]\setminus\{i\}$,
\[\mu_{\theta,t}\left(\prod_{a=1}^{d}V_{i_a}^{(j_a)},B_j;G\right)\leq \max\left\{\epsilon',4\epsilon^{-d}k^d\mu_{\theta,t}(B_{-j}^d,B_j;G)\right\}.\]
Let $E_3$ be the event that for all $i\in[T]$, $j\in [k]$ we have
\[\frac{q_i}{2^{1/(2d)}}|B_j|\leq|V_i^{(j)}|\leq 2q_i|B_j|.\]

We will prove that with positive probability, all of $E_1,E_2,E_3$ occur. We will then show that $E_1 \land E_2 \land E_3$ implies \cref{itm:Vij cardinalities,itm:Vij average defect}.

We begin with $E_3$. Fix $i\in [T]$ and $j\in [k]$. By Hoeffding's inequality (\cref{thm:hoeffding}),
\begin{align*}
\Pr\left(|V_i^{(j)}|<\frac{q_i}{2^{1/(2d)}}|B_j|\right)
&\leq \exp \left( -\frac{2(q_i|B_j|(1-2^{-1/(2d)}))^2}{m} \right),\\
\Pr\left(|V_i^{(j)}|>2q_i|B_j|\right)
&\leq \exp \left( - \frac{2(q_i|B_j|)^2}{m} \right).
\end{align*}
Using the assumption that $p_i \geq m^{-1/(10d)}$, the inequality $1-2^{-1/(2x)} \geq 1/(4x)$ (which holds for all $x >0$), and \eqref{eq:B_j lower bound} we obtain the inequality  
\[q_i|B_j|\geq \left(1-2^{-1/(2d)}\right)q_i|B_j|\geq \left(\frac1{4d}\right)\left(\frac12 m^{-1/(10d)}\right)\left(2^{-1/(2d)}\epsilon m\right)\geq \frac{\epsilon m^{9/10}}{16d}.\]
Union-bounding over all choices of $i,j$, we see that
\[\Pr(\neg E_3)\leq 2Tk \exp \left(- \frac{\epsilon^2 m^{4/5}}{128d^2} \right).\]
By our assumption on $m$ and taking $C$ sufficiently large we have $m^{4/5}\geq \frac{t^{6}Tk}{\epsilon^2} \geq \frac{4^{6} d^2Tk}{\epsilon^2}$, so
\[
2Tk \exp \left(- \frac{\epsilon^2 m^{4/5}}{128d^2} \right) \le 2Tke^{-32Tk} < 1/4.
\]

We turn out attention to $E_1$, for which the analysis is similar. Fix $i\in[T]$, $j\in [k]$, and $Q\in B_{-j}^d$. By Hoeffding's inequality (\cref{thm:hoeffding}),
\[\Pr\left(|N(PE_Q(G);G)\cap V_i^{(j)}|< \frac{q_i}2|N(PE_Q(G);G)\cap B_j|\right)\leq \exp \left( - \frac{2(q_i|N(PE_Q(G);G)\cap B_j|/2)^2}{m} \right).\]
Applying \eqref{eq:NB_j lower bound} we obtain
\[
q_i|N(PE_Q(G);G)\cap B_j| \geq \tfrac12 m^{-1/(10d)} \varepsilon k^{-1/4}m^{3/4}\!/2 \geq \varepsilon k^{-1/4}m^{13/20}\!/4.
\]
Union-bounding over all choices of $i,j,Q$, we see that
\[\Pr(\neg E_1)\leq Tk(km)^d \exp \left( - \frac{\varepsilon^2 m^{13/10}}{32k^{1/2}m} \right) = Tk(km)^d \exp \left( - \frac{\varepsilon^2 m^{3/10}}{32k^{1/2}} \right).\]
Repeatedly applying the assumption on $m$, as long as $C$ is sufficiently large we have $4Tk(km)^d \leq m^{2d}$ and $m^{3/20}\geq \log m$ as well as $m^{3/20}\geq 64dk^{1/2} / \varepsilon^2$. Putting these together we obtain $\Pr(\neg E_1) \le 1/4$.

Finally we consider $E_2$. First we study the related event $E_2'$, defined to be the event that for all $i\in[T]$, $j\in [k]$ and $i_1,\ldots,i_d\in[T]$, $j_{1},\ldots,j_{d}\in[k]\setminus\{j\}$, writing $\mathcal Q=\prod_{a=1}^dV_{i_a}^{(j_a)}$ and $\mathcal B=\prod_{a=1}^d B_{j_a}$, we have
\[|\mathcal Q|\mu_{\theta,t}\left(\mathcal Q,B_j;G\right)\leq  |\mathcal B|\prod_{a=1}^d q_{i_a}\cdot \max\left\{2\mu_{\theta,t}(\mathcal B,B_j;G),\frac{\epsilon'}{2}\right\}.\]

Fix some $i\in[T]$, $j\in [k]$ and $i_1,\ldots,i_d\in[T]$, $j_{1},\ldots,j_{d}\in[k]\setminus\{j\}$.
Let $\mathcal Q,\mathcal B$ be as defined above. Then
\begin{align*}
\E\left[|\mathcal Q|\mu_{\theta,t}\left(\mathcal Q,B_j;G\right)\right]
&=\E\left[\sum_{Q\in\mathcal Q}\omega_{\theta}(|N(PE_Q(G);G)\cap B_j|)^t\right]\\
&=\sum_{Q\in \mathcal B}\omega_{\theta}(|N(PE_Q(G);G)\cap B_j|)^t\cdot\Pr(Q\in \mathcal Q)
\end{align*}
Let $\partial \mathcal B$ be the subset of $\mathcal B$ that consists of $d$-tuples with at least one repeated coordinate. For $Q\in\mathcal B\setminus\partial\mathcal B$ we have $\Pr(Q\in\mathcal Q)=\prod_{a=1}^d q_{i_a}$. Thus we see
\begin{align*}
\E\left[|\mathcal Q|\mu_{\theta,t}\left(\mathcal Q,B_j;G\right)\right]
&\leq |\mathcal B|\prod_{a=1}^d q_{i_a}\cdot\mu_{\theta,t}(\mathcal B,B_j;G)+\sum_{Q\in \partial\mathcal B}\omega_{\theta}(|N(PE_Q(G);G)\cap B_j|)^t.
\end{align*}
Observe that for $Q\subseteq Q'$ it holds that $\omega_{\theta}(|N(PE_Q(G);G)\cap B_j|)\leq \omega_{\theta}(|N(PE_{Q'}(G);G)\cap B_j|)$. Thus, for every $\ell \in [d]$, we have
\[\sum_{Q\in \prod_{a\in[d]\setminus\{\ell\}}B_{j_a}}\omega_{\theta}(|N(PE_Q(G);G)\cap B_j|)^t\leq \frac1{|B_{j_\ell}|}\sum_{Q\in \mathcal B}\omega_{\theta}(|N(PE_Q(G);G)\cap B_j|)^t.\]

Using the bound $|B_{j_\ell}|\geq 2^{-1/(2d)}\epsilon m$ (which follows from \eqref{eq:B_j lower bound} and the assumption $|A_{j_\ell}| \geq \varepsilon m$) and summing over all $d(d-1)/2$ choices of where the repeated coordinates occur, we have the bound
\begin{align*}
\sum_{Q\in \partial\mathcal B}\omega_{\theta}(|N(PE_Q(G);G)\cap B_j|)^t
&\leq\frac{d(d-1)}{2\cdot 2^{-1/(2d)}\epsilon m}\sum_{Q\in \mathcal B} \omega_\theta(|N(PE_Q(G);G)\cap B_j|)^t\\
&\leq \frac{d(d-1)}{2\cdot 2^{-1/(2d)}\epsilon m} |B_{-j}|^d\mu_{\theta,t}(B_{-j}^d,B_j;G)\\
&\leq \frac{d(d-1)}{2\cdot 2^{-1/(2d)}\epsilon m} k^dm^d
\end{align*}
(where the final inequality follows from \eqref{eq:mu B_j upper bound} and the assumption $\mu_{\theta,t}(A_{-j}^d,A_j;G) \leq 1/2$). Thus
\[\E[|\mathcal Q|\mu_{\theta,t}\left(\mathcal Q,B_j;G\right)]\leq |\mathcal B|\prod_{a=1}^d q_{i_a}\cdot\mu_{\theta,t}(\mathcal B,B_j;G)+\epsilon^{-1} d^2 k^dm^{d-1}. \]

Observe that \eqref{eq:omega sum over v} and the bound $|B_{-j}|\leq km$ imply that the random variable $|\mathcal Q|\mu_{\theta,t}\left(\mathcal Q,B_j;G\right)$ only changes by at most $2k^dm^{d-5/8}$ when the assignment of a single vertex $v$ to a part $V_i^{(j)}$ is changed. Thus by McDiarmid's inequality (\cref{thm:mcdiarmid}) we see that
\begin{equation*}
\label{eq:azuma-e2}
\Pr\left(|\mathcal Q|\mu_{\theta,t}\left(\mathcal Q,B_j;G\right)\geq  |\mathcal B|\prod_{a=1}^d q_{i_a}\cdot \max\left\{2\mu_{\theta,t}(\mathcal B,B_j;G),\frac{\epsilon'}{2}\right\}\right)\leq \exp \left(-\frac{2\Delta^2}{m(2k^dm^{d-5/8})^2} \right)
\end{equation*}
where
\begin{align*}
\Delta
&=|\mathcal B|\prod_{a=1}^d q_{i_a}\cdot \max\left\{2\mu_{\theta,t}(\mathcal B,B_j;G),\frac{\epsilon'}2\right\}-\left(|\mathcal B|\prod_{a=1}^d q_{i_a}\cdot\mu_{\theta,t}(\mathcal B,B_j;G)+\epsilon^{-1} d^2 k^dm^{d-1}\right)\\
&\geq |\mathcal B|\prod_{a=1}^d q_{i_a}\cdot\max\left\{\mu_{\theta,t}(\mathcal B,B_j;G),\frac{\epsilon'}{4}\right\}-\epsilon^{-1}d^2k^dm^{d-1}.
\end{align*}
The last line follows from the easy inequality $\max\{x,y\}-x/2\geq\max\{x/2,y/2\}$.

Note that $\prod_{a=1}^d q_{i_a}\geq(\tfrac12 m^{-1/(10d)})^d=2^{-d}m^{-1/10}$. Also, $|\mathcal B|\geq (2^{-1/(2d)}\epsilon m)^d= 2^{-1/2}\epsilon^d m^d$. We bound the maximum from below by $\epsilon'/4$, giving
\[\Delta\geq 2^{-d-5/2}\epsilon^d\epsilon'm^{d-1/10}-\epsilon^{-1}d^2k^dm^{d-1}\geq 2^{-4d}\epsilon^d\epsilon' m^{d-1/10},\]
where the last inequality follows from our assumed lower bound on $m$ and provided that $C$ is sufficiently large.

Union-bounding over all choices of $i,j,i_{1},\ldots,i_d,j_1,\ldots,j_d$ we see that 
\[\Pr(\neg E_2')\leq (Tk)^{d+1} \exp\left( -\frac{2(2^{-4d}\epsilon^d\epsilon' m^{d-1/10})^2}{4k^{2d}m^{2d-1/4}} \right) = (Tk)^{d+1} \exp\left(- \frac{\epsilon^{2d}\epsilon'^2}{2^{8d+1} k^{2d}} m^{1/20} \right).\]
Assuming that $C$ is sufficiently large, we have $m^{1/20} \geq \frac{2^{10d}k^{2d}}{\epsilon^{2d}\epsilon'^2}(d+1)\log(Tkt)$. Then $\Pr(\neg E_2')\le t^{-d-1} < 1/4$.

It now follows that $\Pr(E_1 \land E_2' \land E_3) \geq 1 - \Pr(\neg E_1) - \Pr(\neg E_2') - \Pr(\neg E_3) \geq 1/4$. Thus, with positive probability all of $E_1,E_2',E_3$ occur. When this happens we have that
\begin{align*}
\mu_{\theta,t}\left(\prod_{a=1}^{d}V_{i_a}^{(j_a)},B_j;G\right)
& \leq \frac{|\mathcal B|}{|\mathcal Q|}\prod_{a=1}^d q_{i_a}\max\left\{2\mu_{\theta,t}(\mathcal B,B_j;G),\tfrac{\epsilon'}{2}\right\}\\
&\leq \max\left\{2^{3/2}\mu_{\theta,t}(\mathcal B,B_j;G),\epsilon'\right\},
\end{align*}
where the first inequality follows from $E_2'$ and the second from the fact that $E_3$ implies $|\mc B| \prod_{a=1}^d q_{i_a} \leq 2^{1/2}|\mathcal Q|$. Finally 
\[\mu_{\theta,t}(\mathcal B,B_j;G)\leq \frac{|B_{-j}|^d}{|\mathcal B|}\mu_{\theta,t}(B_{-j}^d,B_j;G)\leq \left(\frac{km}{2^{-1/(2d)}\epsilon m}\right)^d\mu_{\theta,t}(B_{-j}^d,B_j;G),\] so when $E_1,E_2',E_3$ occur, $E_2$ also occurs.

To finish the proof we show that when $E_1 \land E_2 \land E_3$ occurs the desired conclusions are satisfied. By $E_3$ we have $|V_i^{(j)}|\leq 2q_i|B_j|\leq 2q_i|A_j|=p_i|A_j|$ and
\[
|V_i^{(j)}| \geq \frac{q_i}{2^{1/(2d)}}|B_j| \geq \frac{q_i}{2^{1/d}}|A_j|\geq \tfrac14 p_i|A_j|
\]
where the middle inequality follows from \eqref{eq:B_j lower bound}. Thus \cref{itm:Vij cardinalities} holds. Furthermore, for all $i,j,Q$ by $E_1$ we have
\[|N(PE_Q(G);G)\cap V_i^{(j)}|\geq \frac{q_i}2|N(PE_Q(G);G)\cap B_j|\]
so since $\theta_i=q_i\theta/2$ we have $\omega_{\theta_i}(Q,V_i^{(j)};G)\leq\omega_{\theta}(Q,B_j;G)$. Thus 
\begin{align*}
\mu_{\theta_i,t}\left(\prod_{a=1}^{d}V_{i_a}^{(j_a)},V_i^{(j)};G\right)
&\leq \mu_{\theta,t}\left(\prod_{a=1}^{d}V_{i_a}^{(j_a)},B_j;G\right)\\
&\leq \max\left\{\epsilon',4\epsilon^{-d}k^d\mu_{\theta,t}(B_{-j}^d,B_j;G)\right\}\tag{since $E_2$ occurs}\\
&\leq \max\left\{\epsilon',8\epsilon^{-d}k^d\mu_{\theta,t}(A_{-j}^d,A_j;G)\right\}.
\end{align*}
Therefore we have shown with positive probability the sets $V_i^{(j)}$ satisfy the desired properties.
\end{proof}

\subsubsection{Finishing up}
\begin{proof}[Proof of \cref{thm:lin-turan}]
Let $H$ be a $k$-partite $k$-uniform hypergraph on $h$ vertices with $\skel{H}=d$. Since $\skel{H}>0$ we conclude that $H$ is nonempty, which implies that $d\geq k-1$. Let $V(H)=W_1\sqcup\cdots\sqcup W_k$ be the vertex partition of $H$. We apply \cref{lem:H-partition} to further partition $H$. This produces $W_i^{(j)}$ for $i\in[T]$, $j\in[k]$ with $T\leq\log_2 h$. For each $j\in[k]$ such that $W_T^{(j)}$ is empty we modify $H$ by adding an isolated vertex to $H$ that is in this vertex set. We abuse notation and also refer to the modified hypergraph as $H$.

Define
\[\eta=\frac{p^{\frac{4d}{k-1}(16d+1)^k}}{2^{2^{4d}+1}k^{4d+1}}\qquad\text{and}\qquad\theta=\eta^3n.\]
Let $G$ be a $k$-partite $k$-uniform hypergraph with parts consisting of $n$ vertices each and with at least $pn^k$ edges. By \cref{cor:simul-pruning} (applied with $t=16d$, and noting that $4d \geq k-1$) we can find a subhypergraph $G'$ of $G$ with vertex parts $A_1\sqcup\cdots\sqcup A_k$ with at least $\tfrac12 p^{(16d+1)^k}n^k$ edges such that
\[\mu_{\theta,16d}(A_{-j}^{4d},A_j;G')\leq 2^{2^{4d}+1}k^{4d+1}p^{-\frac{4d}{k-1}(16d+1)^k}\left(\frac{\theta}{n}\right)^{16d}\leq \eta^{47d}.\]
for each $j\in [k]$.

We apply \cref{lem:G-partition} with parameters $(T,d,t,k,m,\theta,\varepsilon,\varepsilon') = (T, 4d, 16d, k, n, \theta, \eta^3, \eta^{34d}$). We quickly verify that the assumptions of \cref{lem:G-partition} hold. Clearly, we have $k\geq 2$ and $16d\geq4\cdot 4d$. Let $C$ be the universal constant from \cref{lem:G-partition}. By taking $c_{p,d,k}$ sufficiently small and noting that \cref{thm:lin-turan} holds vacuously when $c_{p,k,d}n<1$, we may assume that $n$ is sufficiently large that $n \geq \left(\frac{k^{4d}T\cdot 16d}{(\eta^3)^d\eta^{34d}}\right)^{C}$ holds. By definition $\theta\geq\eta^3n$.

For each $i\in [T]$, let $p_i=c2^{-i/(80d)}$ where $c$ is the normalizing constant such that $\sum_i p_i=1$. We have
\[c=\frac1{\sum_{i=1}^T 2^{-i/(80d)}}\geq \frac1{160d}.\]
Then $\sum p_i\leq 1$ and $p_i\geq p_T\geq 2^{-\log_2 n/(80d)}/(160d)\geq n^{-1/(40d)}$ (since we can assume that $n > (160d)^{80d}$).

Since $G'$ has at least $\tfrac12 p^{(16d+1)^k}n^k$ edges, we see that $\eta^3n\leq \tfrac12 p^{(16d+1)^k}n\leq |A_j|\leq n$ for each $j\in[k]$. Finally, we know that $\mu_{\theta,16d}(A_{-j}^{4d},A_j;G)\leq \eta^{47d}<\tfrac12$.

Having verified its assumptions, \cref{lem:G-partition} now produces disjoint sets $V_i^{(j)}$ for $i\in[T]$, $j\in[k]$ such that $\tfrac14 p_i|A_j|\leq |V_i^{(j)}|\leq p_i|A_j|\leq p_i n$ and 
\[\mu_{\theta_i,16d}\left(\prod_{a=1}^{4d}V_{i_a}^{(j_a)},V_i^{(j)};G\right)\leq \max\left\{\eta^{34d},8\eta^{-12d}k^{4d}\mu_{\theta,16d}(A_{-j}^{4d},A_j;G)\right\}=\eta^{34d}\]
for all $i\in[T]$, $j\in [k]$ and $i_1,\ldots,i_{4d}\in[T]$, $j_{1},\ldots,j_{4d}\in[k]\setminus\{j\}$. In the above expression, $\theta_i=p_i\theta/4$.

Now we wish to apply \cref{thm:embedding} to show that \cref{alg} produces an embedding of $H$ into $G$ with positive probability. We have produced partitions $V_i^{(j)}$ and $W_i^{(j)}$ of $V(G)$ and $V(H)$ respectively into $K=Tk\leq k\log_2 h$ parts. We order these parts by the lexicographic order on $(i,j)$. Note that in particular, each edge of $H$ has vertices coming from $k$ different parts of this partition, and each vertex $v\in V(H)$ has at most $4d$ neighbors in later parts of this partition. Finally, for \cref{stp} we need to pick a $4d$-tuple $f_v$ for each $v\in W_i^{(j)}$ with $(i,j)\neq (T,k)$. We produce this tuple by arbitrarily ordering the neighbors $N^+(v)$ and then padding with elements of $W_{i'}^{(j')}$ where $j'\neq j$ and $(i,j)$ precedes $(i',j')$ in lexicographic order. (We can always do this padding because the $W_T^{(j')}$ are never empty). 

In what follows we use the notation from \cref{not:algorithm analysis}. By our choice of $f_x$, we have that $\mc Q_x$ is a set of the form that is controlled by \cref{lem:G-partition}(ii). Thus we know $\mu_{16d}(x)=\mu_{\theta_x,16d}(\mc Q_x; V_x)\leq \eta^{34d}$ and so $\mu_{16d}\leq \eta^{34d}$.

Now we check the hypotheses of \cref{thm:embedding}. By our choice of $\theta$ we have
\[\theta_j=p_j\theta/4\geq  \frac1{160d} 2^{-j/(80d)} \eta^3n\geq 2^{-j+2}h\geq 2|W_i^{(j)}|.\]
The second inequality holds for a sufficiently-small choice of $c_{p,d,k}>0$ and the assumption $h\leq c_{p,d,k}n$.
Similarly, we have
\[|V_T^{(k)}|\geq \frac14 p_T|A_k|\geq\frac1{640d} 2^{-T/(80d)} \eta^3 n\geq 2^{-T+2}h\geq 2|W_T^{(k)}|.\]

Now
\[\gamma\leq \max_{i,j}\frac{|V_i^{(j)}|}{\theta_j}\leq \max_{j}\frac{p_jn}{p_j\theta/4}\leq\frac{4n}{\theta}=\frac4{\eta^3}.\]
Similarly,
\[\frac{|W_i^{(j)}|}{\theta_j}\leq \frac{2^{-j+1}h}{p_j\theta/4}\leq\frac{2^{-j+1}h}{\frac1{160d}2^{-j/(80d)}\eta^3n}\leq \frac{327680h}{2^{j/2} \eta^3 n}\leq\frac1{2^{j/2}}\]
where we have again used the fact that $h\leq c_{p,d,k}n$ for a sufficiently-small choice of $c_{p,d,k}>0$.

Therefore 
\begin{align*}
2^{8d+2}\gamma^{8d}\mu_{16d}\sum_{i,j}|W_i^{(j)}|/\theta_i
& \leq 2^{8d+2}\left(\frac{4}{\eta^3}\right)^{8d}\eta^{34d}\sum_{i,j}\frac1{2^{j/2}}\\
&\leq k2^{24d+4}\eta^{10d}<1.
\end{align*}
Thus, \cref{thm:embedding} implies that \cref{alg} produces an embedding of $H$ into $G$ with positive probability, as desired.
\end{proof}

\begin{remark}
The above proof works for $c_{p,d,k}\leq (k^{-1}\eta)^{100Cd}$. For fixed $d,k$ this gives $c_{p,d,k}\leq p^{2^{O(k \log d)}}$.
\end{remark}

\section{Concluding remarks}
\label{sec:open}

Define $\exp^{(k)}(x)$ (the exponential tower of height $k$ with $x$ at the top) by $\exp^{(1)}(x)=2^x$ and $\exp^{(k)}(x)=2^{\exp^{(k-1)}(x)}$. For $k\geq 4$, our proof of \cref{thm:lin-ramsey-main} gives $r(H;q)\leq\exp^{(k-1)}(O_{k,q}(d))n$. This is known to be best-possible for most regimes of the parameter choices. For example, for $k\geq 3$ and $q\geq 4$ it is known \cite{EHR65} that the complete $k$-uniform hypergraph has $r(K_{d+1}^{(k)};q)\leq\exp^{(k-1)}(O_{k,q}(d))$. For $q=2,3$ it is not known if there is a matching lower bound but this is generally conjectured to be the case (see \cite{CFS10} for the best-known lower bound). Thus for $k\geq 4$ we should not expect to be able to improve our bound on $r(H;q)$ in general. As further evidence for this being the correct $d$-dependence note that it also matches the best-known upper bound for bounded-degree hypergraphs \cite{CFS09}.

However, for $k=3$ our argument only gives the bound $r(H;q)\leq\exp^{(2)}(O_q(d\log d))n$. In fact, with a little more care our argument gives $r(H;q)\leq \exp^{(2)}(O_q(d))n$ for 3-uniform $H$ as long as $H^{(1)}$ has bounded chromatic number, but this does not improve the bound for general 3-uniform $d$-skeletal degenerate hypergraphs. We conjecture that the $\log d$ factor can be removed from the top of the tower.

\begin{conjecture}
\label{conj:3-uniform-double-exp}
For a 3-uniform $n$-vertex hypergraph $H$ with $d_1(H)=d$, \[r(H;q)\leq \exp^{(2)}(O_q(d))n.\]
\end{conjecture}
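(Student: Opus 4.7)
The conjecture asks to remove the $\log d$ factor from the current upper bound $r(H;q) \leq \exp^{(2)}(O_q(d\log d))n$. Tracing the proof of \cref{thm:lin-ramsey-main} for $k=3$, this $\log d$ arises not from the Ramsey number $r(K_{d+1}^{(3)};q)\leq\exp^{(2)}(O_q(d))$ itself but from applying \cref{thm:lin-turan} at uniformity $k = \ell = d+1$ (the lifted uniformity): by the remark following the proof of \cref{thm:lin-turan}, the constant $c^{-1}_{p,\ell,d}$ scales as $p^{2^{O(\ell\log d)}} = p^{d^{O(d)}}$, and combined with $p^{-1} = \exp^{(2)}(O_q(d))$ this yields $c^{-1}_{p,\ell,d} \leq \exp^{(2)}(O_q(d \log d))$. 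To recover the conjectured bound it therefore suffices to apply a Turán-style embedding at uniformity $k = 3$ rather than $k = \ell$, thereby eliminating the $\log d$ inside the tower.

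The plan is to bypass the lift in \cref{lem:kr-reduction} entirely and embed $H$ directly into a dense monochromatic 3-uniform hypergraph $G^+$. Given a $q$-coloring of $K_N^{(3)}$ with $N \geq \exp^{(2)}(C_q d)\cdot n$, the double-counting argument in the proof of \cref{lem:kr-reduction} (using $r(K_{d+1}^{(3)};q)\leq\exp^{(2)}(O_q(d))$) produces a single color $i$ whose monochromatic $K_{d+1}^{(3)}$ copies cover an $\exp^{(2)}(-O_q(d))$-fraction of $(d+1)$-subsets of vertices; keeping only these monochromatic 3-edges then yields a 3-uniform hypergraph $G^+$ of density $\Omega_q(1)$. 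The remaining task is a Turán-type theorem for general (non-3-partite) 3-uniform hosts: any 3-uniform $G$ with at least $p\,v(G)^3$ edges contains every 3-uniform $H$ with $d_1(H)=d$ and $v(H)\leq c'_{p,d}\cdot v(G)$, where $c'_{p,d}$ is polynomial in $p$ with polynomial-in-$d$ exponent. Composing the two steps then gives $r(H;q)\leq\exp^{(2)}(O_q(d))n$.

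For the Turán embedding I would adapt the pruning of \cref{cor:simul-pruning} and the random greedy embedding of \cref{thm:embedding} to the non-partite setting. In the pruning step, choose a single random set $\bm X\subseteq V(G)$ and keep an edge $e\in E(G)$ if and only if $\{x\}\cup f\in E(G)$ for every $x\in \bm X$ and every pair $f\subseteq e$; the moment bound in \cref{lem:simul-pruning} should generalize to produce average defect estimates for all $d$-subsets $Q\subseteq V'$ targeting a common vertex in $V'$. For the greedy embedding, use the layer decomposition of \cref{lem:H-partition} to order $V(H)$ but no longer impose a 3-partition of $V(H)$; each vertex $x\in V(H)$ still carries a $d$-tuple $f_x$ of its preceding neighbors in $H^{(1)}$, and its image is drawn from the common neighborhood of $\phi(f_x)$ inside all of $V(G')$.

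The main obstacle is extending the defect analysis of \cref{lem:omega-embed}--\cref{lem:random-defect-bound} to this non-partite setting. The tree-based iteration in \cref{lem:random-defect-bound} crucially uses that for each vertex $b$ in the auxiliary tree the associated set $\mathcal{Q}_b$ is a product $V_{i_1}\times\cdots\times V_{i_d}$ disjoint from the target part $V_b$, and that neutralizing a vertex of $f_b$ reduces $b$'s defect to the average defect $\mu_t(b)$ on this product. Without a fixed partition on either side, the analogue of $\mathcal{Q}_x$ degenerates to $V(G')^d$ (including diagonal terms), and the defect can blow up whenever the query set partially overlaps the target vertex pool. Resolving this requires strengthening \cref{lem:simul-pruning} to simultaneously control the moments of defects indexed by all overlap patterns between query set and target part, with total loss still polynomial in $d$ and $p^{-1}$. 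If this can be done, composition with the $\exp^{(2)}(O_q(d))$-sized monochromatic structure from Step 1 yields the conjectured bound.
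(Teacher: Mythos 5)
This statement is posed as a conjecture in the paper; the authors explicitly leave it open, so there is no paper proof to compare your proposal against, and the question is simply whether your argument closes the gap. It does not.

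Your diagnosis of where the $\log d$ arises is correct: the lift to $\ell = d+1$ uniformity in \cref{lem:kr-reduction} forces \cref{thm:lin-turan} to be applied at uniformity $\ell$, and the remark after that theorem gives $c_{p,\ell,d}^{-1} \leq p^{-2^{O(\ell \log d)}}$, which with $\ell = d+1$ and $p^{-1} = \exp^{(2)}(O_q(d))$ produces the unwanted $\exp^{(2)}(O_q(d\log d))$. Your proposed remedy --- embed $H$ directly into a dense $3$-uniform host, bypassing the lift --- is the natural route and is exactly what the authors say succeeds when $H^{(1)}$ has bounded chromatic number (``with a little more care our argument gives $r(H;q)\le \exp^{(2)}(O_q(d))n$ for $3$-uniform $H$ as long as $H^{(1)}$ has bounded chromatic number''). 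Also, Step 1 is slightly roundabout: to get a $3$-uniform host of density $\Omega_q(1)$ you need not pass through monochromatic $K_{d+1}^{(3)}$'s; taking the densest color class already has density $\ge 1/q$. The monochromatic-clique counting is needed only for the $\ell$-uniform lift.

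However, what you propose in place of the lift is precisely the open step. Your plan requires a non-$3$-partite analogue of \cref{thm:lin-turan}: any $3$-uniform $G$ of constant density contains every $3$-uniform $H$ with $d_1(H)=d$ and $v(H)\le c'_{p,d}\, v(G)$, with $c'_{p,d}$ polynomial in $p$ with exponent polynomial in $d$. The paper's proof of \cref{thm:lin-turan} is built on a $k$-partite framework at every stage. In \cref{lem:simul-pruning}, the random sets $\bm X_i$ are drawn per part and an edge survives if replacing each coordinate $e_i$ by any $x\in\bm X_i$ keeps it an edge; the defect $\mu_{\theta,t}((V_{-i})^d,V_i;G)$ is only defined relative to a fixed target part $V_i$. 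In \cref{stp} the embedding crucially assumes every edge of $H$ meets $k$ different parts, which for $k=3$ is exactly tripartiteness of $H$, i.e.\ $\chi(H^{(1)})\le 3$. For a $d$-degenerate $1$-skeleton this can fail badly ($\chi$ can be as large as $d+1$), which is why the lift to $\ell = d+1$ parts was introduced in the first place. Your sketch of a non-partite pruning (``keep $e$ if $\{x\}\cup f\in E(G)$ for all $x\in \bm X$ and all pairs $f\subseteq e$'') does not give well-defined defect quantities once the query tuple $Q$ can overlap the target pool, and you yourself flag that the tree-iteration of \cref{lem:random-defect-bound} and the partite factorization of $\mc Q_x = V_{i_1}\times\cdots\times V_{i_d}$ would break. ``Control all overlap patterns with polynomial loss'' is the statement of a missing lemma, not its proof, and it is precisely what the authors could not do. So this is a correct analysis of the obstruction, not a resolution of the conjecture.
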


A similar statement \cite{BFS23} is known when the skeletal degeneracy is replaced by the square root of the number of edges. \cref{conj:3-uniform-double-exp} is stronger than this result since $d_1(H)\leq O(\sqrt{e(H^{(1)})})\leq O(\sqrt{e(H)})$.

\subsection{Better bounds on the Tur\'an number}

\cref{thm:k-unif-turan-main} states that for every $k$-uniform $k$-partite hypergraph $H$,
\begin{equation}\label{eqn:ex(n,H) rates of growth}
\Omega_k\left(n^{k-\frac{C_k}{d_1(H)}}\right) \leq \ex(n,H) \leq O_H\left(n^{k-\frac{c_k}{d_1(H)^{k-1}}}\right).
\end{equation}
We would like to better understand the gap between the lower and upper bounds. We will momentarily suggest a finer measure of degeneracy and conjecture that it better captures the exponent of $\ex(n,H)$. Before doing so we point out that both rates of growth in \eqref{eqn:ex(n,H) rates of growth} are possible.

First note that \cref{prop:complete-k-unif-k-part-lower} implies that the upper bound in \eqref{eqn:ex(n,H) rates of growth} is tight for the complete $k$-uniform $k$-partite hypergraph $K^{(k)}_{d,\ldots,d}$ (up to the constant $c_k$). Furthermore, \cref{prop:bipartite-hedgehog-upper} shows show that the lower bound in \eqref{eqn:ex(n,H) rates of growth} is tight for the bipartite hedgehog $H_d^{(k)}$ (up to the constant $C_k$).

Given these two examples, it is clear that $d_1(H)$ is not a fine enough measure to capture the Tur\'an exponent closely. Instead we consider $d_{\mathsf{max}}(H) = \max_{1\leq i<k} d_i(H)$, the maximum of the $i$-th skeletal degeneracy over all $i$ (see \cref{defn:skeletal degeneracy} for the definition).

\cref{thm:k-uniform-construction-ith-degen} implies that there is some constant $C_k>0$ such that for every $k$-uniform $k$-partite hypergraph $H$ with at least two edges, we have\[\ex(n,H)\geq \Omega_k(n^{k-C_k/d_{\mathsf{max}}(H)}).\]

We conjecture that this result is tight up to the constant $C_k$.

\begin{conjecture}
\label{conj:d-max-turan}
There is a constant $c_k>0$ such that for every $k$-uniform $k$-partite hypergraph $H$,
\[\mathrm{ex}(n,H)\leq O_H\left(n^{k-\frac{c_k}{d_{\mathsf{max}}(H)}}\right).\]
\end{conjecture}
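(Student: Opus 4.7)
The natural strategy is to generalize both halves of the proof of \cref{thm:k-unif-turan-upper}, replacing the single-level vertex-extension property by a multi-level version and the $d_1$-degeneracy ordering by one that respects every skeleton simultaneously. Concretely, I would call $G$ \emph{$(a,d)$-fully-vertex-extending to $V_t$} if, for every $1\le j<k$, every $S\subseteq V_{-t}$ with $|S|\le d$, and every $j$-edge $e\in E(G^{(j-1)}[S])$, there exist $a$ vertices $v\in V_t$ such that $e\cup\{v\}\in E(G^{(j)})$ for all such $e$ simultaneously. Note that this property is monotone in $j$ only in a weak sense: extensions of $(j+1)$-subsets to $(j+2)$-subsets are \emph{not} implied by extensions of $j$-subsets, so one really does need to enforce the condition at every skeletal level.

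The embedding step would then be easy: fix $j^{*}\in\arg\max_{j}d_{j}(H)$, take a $d_{j^{*}}(H)$-degeneracy ordering of $H^{(j^{*})}$ (throwing in auxiliary vertices as in \cref{thm:k-unif-turan-upper} to handle the partite structure), and embed greedily. When placing a new vertex $v$, its constraints come from backward $(j^{*}+1)$-edges of $H^{(j^{*})}$ through $v$, whose previously-embedded endpoints form a set of at most $d_{j^{*}}(H)\le d_{\mathsf{max}}(H)$ vertices, and from the lower-level constraints inherited by $e\in E(H^{(j-1)})$ for $j<j^{*}$, which impose extension requirements at level $j$. Full-vertex-extension at every level then supplies a valid image, exactly as in the proof of \cref{thm:k-unif-turan-upper}.

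The hard part is pruning: I would want a strengthening of \cref{thm:k-uniform-pruning} that produces, from any $G$ with $pn^{k}$ edges, a nonempty subhypergraph that is $(n^{1/3},d)$-fully-vertex-extending to every part, while losing only $n^{k-c_{k}/d_{\mathsf{max}}(H)}$ edges (instead of the $n^{k-c_{k}/d^{k-1}}$ obtained iteratively). This is the real obstacle. The dependent random choice used in \cref{lem:intermediate-step-quantitative} loses a factor roughly $d$ per invocation, and the current proof invokes it $k-1$ times because it handles one target part at a time and only controls one skeletal level. To bring the exponent down to $c_{k}/d_{\mathsf{max}}$ one must either (i) design a single joint DRC that samples tuples $(X_{1},\dots,X_{k})$ preserving extension properties at every level $j$ at once, or (ii) identify a structural reduction that collapses the $k-1$ iterations into a constant number.

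A plausible partial attack is to pass to the skeleton at the critical level. If $j^{*}=\arg\max_{j}d_{j}(H)$, then $G^{(j^{*}-1)}$ is a $(j^{*}+1)$-uniform host, and one could try to embed $H^{(j^{*})}$ into it using the $j^{*}$-uniform analogue of \cref{thm:k-unif-turan-upper}; this costs only $n^{k-c/d^{j^{*}}}$ rather than $n^{k-c/d^{k-1}}$, and is the bound one could realistically hope to prove first before tackling the full conjecture. Lifting such an $H^{(j^{*})}$-copy back to a genuine $H$-copy in $G$ is another nontrivial step, and would probably require a secondary dependent-random-choice argument to upgrade skeletal edges to full edges of $G$. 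These partial results may be the right stepping stones, but closing the gap from $d^{j^{*}}$ (and a fortiori from $d^{k-1}$) to $d_{\mathsf{max}}$ seems to need a genuinely new DRC-type lemma.
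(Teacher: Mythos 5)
This statement is Conjecture~\ref{conj:d-max-turan}; the paper poses it as an open problem and offers no proof, so there is nothing to compare your attempt against. Your write-up is not a proof either, and to your credit you say so explicitly: you isolate the pruning step as the obstacle and correctly observe that the iterated dependent-random-choice argument underlying Theorem~\ref{thm:k-uniform-pruning} inherently pays a factor of roughly $d$ per level, producing an exponent of order $d^{k-1}$ rather than $d_{\mathsf{max}}$. That diagnosis matches why the conjecture is open.

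A few remarks on the sketch itself. First, the multi-level ``fully-vertex-extending'' notion is more than the embedding step actually requires: the paper's argument (via the auxiliary hypergraph $\hat H$) shows that extending only the top-level $(k-1)$-edges suffices, because the trick of embedding $\hat w_i$ first guarantees that $\phi(e)$ is already a partial edge of $G$ at level $k-1$ whenever $e\cup\{v_j\}$ is an edge of $\hat H$. So the bottleneck is purely the loss in the pruning lemma, not the form of the extension property. Second, your embedding step as written takes a degeneracy ordering of $H^{(j^{*})}$, but what the greedy embedding actually needs is an ordering in which each vertex has few back-neighbors in the $1$-skeleton (so that the set $S$ of embedded neighbors is small); these are different requirements, and one would likely want the simultaneous ordering given by the proposition at the end of Section~\ref{sec:open}, which controls all skeletal levels at once at the cost of a $k^{2}$ factor. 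Third, the suggested reduction to embedding $H^{(j^{*})}$ in $G^{(j^{*}-1)}$ and then lifting skeletal edges to full edges of $G$ is itself nontrivial: a $(j^{*}+1)$-clique in $G^{(j^{*}-1)}$ need not sit inside a common $k$-edge, so the lift is not automatic and would need its own argument. None of these remarks rescues the proposal; they just sharpen where the missing ideas would have to go. As things stand, Conjecture~\ref{conj:d-max-turan} remains open.
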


\subsection{Relations between various skeletal degeneracies}

For a $k$-uniform $H$, note the inequalities $d_i(H)\leq \binom{d_1(H)} {i}$, implying that 
\begin{equation*}
\skel{H}\leq d_{\mathsf{max}}(H)\leq \frac{\skel{H}^{k-1}}{(k-1)!}.
\end{equation*}
(In fact one can deduce more relations between the $d_i(H)$ using the Kruskal--Katona theorem.)

It follows from standard properties of the usual notion of degeneracy that for any $k$-uniform $H$ and any $1\leq i<k$ there is an ordering of $V(H)$ such that each vertex is the last vertex of at most $d_i(H)$ edges of $H^{(i)}$. However, at first sight it seems possible that this produces $k-1$ different orderings, one for each value of $i$. We show the nonobvious fact that (losing a factor of $k^2$) one can take a single ordering simultaneously for all $i$. This is a property of max-degeneracy that may be useful in attacking \cref{conj:d-max-turan}. 

\begin{proposition}
For every $k$-uniform hypergraph $H$ there is an ordering $v_1,v_2,\ldots,v_n$ of $V(H)$ such that for each vertex $v_i$, the number of sets $S\subseteq\{v_1,\ldots,v_{i-1}\}$ such that there exists an edge $e$ of $H$ with $e\cap\{v_1,\ldots,v_i\}=S\cup\{v_i\}$ is at most $k^2 d_{\mathsf{max}}(H)$.
\end{proposition}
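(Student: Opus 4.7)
The plan is to construct the ordering in reverse by a greedy procedure: at each step, remove a vertex of small total degree, summed across all skeleta, within the induced subhypergraph on the remaining vertices. Let $d=d_{\mathsf{max}}(H)$; I may assume $d\ge 1$, since otherwise $H^{(1)}$ is empty and $H$ itself has no edges (so any ordering works). For any $U\subseteq V(H)$ and $v\in U$, I would define the weight
\[w_U(v)=\sum_{j=1}^{k}\deg_{H^{(j-1)}[U]}(v),\]
where $H^{(j-1)}[U]$ is the induced subhypergraph of the $(j-1)$-skeleton on $U$.

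The key observation will be a one-line averaging argument. By double-counting, $\sum_{v\in U}w_U(v)=\sum_{j=1}^{k}j\cdot e(H^{(j-1)}[U])$. Since $H^{(j-1)}[U]$ is a subhypergraph of $H^{(j-1)}$, its degeneracy is at most $d_{j-1}(H)\le d$ for $j\ge 2$, so the standard fact that a $d$-degenerate $N$-vertex hypergraph has at most $dN$ edges gives $e(H^{(j-1)}[U])\le d|U|$ (for $j=1$ the bound $e(H^{(0)}[U])\le|U|\le d|U|$ is trivial). Summing, $\sum_{v\in U}w_U(v)\le d|U|\cdot k(k+1)/2\le k^2 d|U|$, so some $v\in U$ has $w_U(v)\le k^2 d$. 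I would then iterate: starting with $U_n=V(H)$, for $i=n,n-1,\ldots,1$ pick such a vertex $v\in U_i$, declare $v_i=v$, and set $U_{i-1}=U_i\setminus\{v_i\}$. This produces an ordering $v_1,\ldots,v_n$ in which $U_i=\{v_1,\ldots,v_i\}$ and $w_{U_i}(v_i)\le k^2 d$ for every $i$.

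Finally, I would verify the ordering. Fix $i$ and consider any $S$ counted by the statement; then $|S|=j-1$ for some $j\in\{1,\ldots,k\}$, and $S\cup\{v_i\}$ is a $j$-subset of $U_i$ contained in some edge of $H$, i.e., an edge of $H^{(j-1)}[U_i]$ containing $v_i$. Since distinct $S$ yield distinct edges, the number of such $S$ is at most $\sum_{j=1}^{k}\deg_{H^{(j-1)}[U_i]}(v_i)=w_{U_i}(v_i)\le k^2 d$, as desired. The only subtlety to watch out for is that the relevant object is the induced skeleton $H^{(j-1)}[U]$, not the skeleton of the induced subhypergraph $(H[U])^{(j-1)}$, which could be strictly smaller; but since degeneracy is monotone under taking subhypergraphs, the bound $e(H^{(j-1)}[U])\le d|U|$ still applies, so no substantial obstacle arises beyond the averaging step.
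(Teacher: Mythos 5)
Your proof is correct and follows essentially the same approach as the paper's: a greedy reverse construction of the ordering using an averaging argument, with the edge count bounded via the degeneracy of the induced skeletons. The only cosmetic difference is that the paper works with the traces $\{e\cap U : e\in E(H)\}$ rather than with the induced skeletons $H^{(j-1)}[U]$ (a superset), but the degeneracy bound gives $k^2 d_{\mathsf{max}}(H)$ either way.
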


\begin{proof}
For a $k$-uniform hypergraph $H=(V,E)$ write $H^*[U]=(U,\{e\cap U:e\in E\})$ for the not-necessarily-uniform ``induced subhypergraph'' on $U$.

We will first show that for each nonempty $U\subseteq V$ there exists a vertex $v\in U$ such that
\begin{equation}
\label{eq:low-deg-vertex}
\sum_{r=1}^k \deg_r(v,H^*[U])\leq \sum_{i=0}^{k-1} (i+1)d_i(H).
\end{equation}
where $\deg_r(v,G)$ is the number of $r$-edges of $G$ which contain $v$.

Write $E_r(G)$ for the set of $r$-edges of $G$. Let $U \subseteq V$ be nonempty. For the sake of contradiction, suppose that \cref{eq:low-deg-vertex} fails for every $v\in U$. Then we have
\[\sum_{r=1}^k r|E_r(H^*[U])|=\sum_{r=1}^k\sum_{v\in U}\deg_r(v,H^*[U])>|U|\sum_{i=0}^{k-1} (i+1)d_i(H).\]

However, since the $(i+1)$-uniform part of $H^*[U]$ has $i$-th skeletal degeneracy at most $d_i(H)$, we know that $|E_{i+1}(H^*[U])|\leq |U|d_i(H)$. Summing over all $i$ contradicts the above inequality.

Now to produce the ordering, suppose that we have already chosen vertices $v_{i+1},\ldots,v_n$. Let $v_i$ be a vertex that satisfies \cref{eq:low-deg-vertex} for $U=V(H) \setminus \{ v_{i+1},\ldots,v_n \}$. Iterating this procedure $n$ times produces an ordering with the required property since $\sum_{i=0}^{k-1} (i+1)d_i(H)\leq k^2d_\mathsf{max}(H)$.
\end{proof}

\subsection{More conjectures on max-skeletal degeneracy}

As shown above, we have the bounds $d_1(H)\leq d_{\mathsf{max}}(H)\leq d_1(H)^{k-1}$. The hardest cases for \cref{conj:d-max-turan} should be for those $H$ where there is a large gap between $d_{\mathsf{max}}(H)$ and $d_1(H)^{k-1}$. An interesting example comes from Latin squares.

\begin{conjecture}
Let $L$ be a $d\times d$ Latin square. Define $H_L$ to be the 3-uniform 3-partite hypergraph on vertex set $[d]\sqcup[d]\sqcup[d]$ where $(i,j,k)\in E(H_L)$ if and only if $L(i,j)=k$. Then $\mathrm{ex}(n,H_L)=n^{3-\Theta(1/d)}$.
\end{conjecture}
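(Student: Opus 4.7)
The lower bound follows from \cref{thm:k-uniform-construction-ith-degen} applied with $i=2$. Every vertex of $H_L$ has degree exactly $d$ (each row, column, and symbol of $L$ contributes $d$ edges through it), and deleting edges or vertices only decreases degrees, so $d_2(H_L)=d$. The theorem then yields $\ex(n,H_L)\geq\Omega(n^{3-27/d})$.

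The matching upper bound $\ex(n,H_L)\leq O(n^{3-c/d})$ is the difficult direction. Since $H_L^{(1)}=K_{d,d,d}$ we have $d_1(H_L)=2d$, so a direct application of \cref{thm:k-unif-turan-main} gives only $O(n^{3-c/d^2})$. Even the inclusion $H_L\subseteq K^{(3)}_{d,d,d}$ combined with the classical Erd\H{o}s bound on $\ex(n,K^{(3)}_{d,d,d})$ yields only $O(n^{3-1/d^2})$. To reach exponent $3-c/d$ one must exploit the sparsity of $H_L$, which has only $d^2$ edges rather than the $d^3$ of $K^{(3)}_{d,d,d}$.

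My plan is to combine dependent random choice with the defining property of Latin squares: every pair of vertices of $H_L$ in distinct parts lies in exactly one edge. First, use a one-sided variant of \cref{thm:k-uniform-pruning} to pass to a subhypergraph $G'$ on linear-size parts $V_1'\sqcup V_2'\sqcup V_3'$ in which every pair $(v_1,v_2)\in V_1'\times V_2'$ has at least $n^{1-c/d}$ common extensions into $V_3'$. Second, by averaging some $v_3^\star\in V_3'$ has link on $V_1'\times V_2'$ of density $\gtrsim n^{-c/d}$; if $c\leq 1$, K\H{o}v\'ari--S\'os--Tur\'an supplies a $K_{d,d}$ in this link, which embeds $V_1(H_L)\sqcup V_2(H_L)$. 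Third, embed $V_3(H_L)$: for each symbol $k\in[d]$ the pairs $\{(i,j):L(i,j)=k\}$ form a permutation matrix within the chosen $K_{d,d}$, and one needs $d$ distinct vertices $v_3^{(k)}\in V_3'$ such that $v_3^{(k)}$ lies in the common $G'$-neighborhood of the $d$ pairs of its permutation matrix.

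The main obstacle is the third step. Heuristically the common neighborhood of a $d$-element permutation matrix has size $\sim n\cdot(n^{-c/d})^d=n^{1-c}$, which for $c<1$ is abundant, but the $d$ permutation matrices induced by $L$ share the same $K_{d,d}$ ground set, so the corresponding $d$ common-neighborhood events are strongly correlated. Controlling this coupling is exactly the kind of problem \cref{lem:simul-pruning} is designed for: it bounds high moments of defects over families of vertex sets simultaneously. I expect the hardest technical step to be interleaving the K\H{o}v\'ari--S\'os--Tur\'an search for the $K_{d,d}$ with simultaneous defect bounds on the $d$ permutation matrices that $L$ induces on that $K_{d,d}$, so that the chosen $K_{d,d}$ is itself ``typical'' in a sense strong enough to guarantee $d$ distinct valid $v_3^{(k)}$'s. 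Achieving this with an absolute constant $c>0$ independent of $d$ is, I believe, the crux of the conjecture.
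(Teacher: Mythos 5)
The statement you are addressing is a \emph{conjecture}: the paper offers no proof, so there is no paper argument to compare against, and your proposal does not resolve it either---as you yourself acknowledge in the final sentence.

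The pieces of your proposal that are actual arguments check out, and they match the paper's own discussion surrounding the conjecture. Since $H_L$ is $d$-regular and its $2$-skeleton is $H_L$ itself, $d_2(H_L)=d$, and \cref{thm:k-uniform-construction-ith-degen} with $k=3$, $i=2$ gives $\ex(n,H_L)\geq\Omega(n^{3-27/d})$, a correct lower bound of the claimed shape. Your bookkeeping that $H_L^{(1)}=K_{d,d,d}$ (every cross-pair lies in an edge because $L$ is Latin), hence $d_1(H_L)=2d$, and that both \cref{thm:k-unif-turan-main} and the Erd\H{o}s bound through $K^{(3)}_{d,d,d}$ give only $n^{3-c/d^2}$, is also correct. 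The upper bound $O(n^{3-c/d})$ is the open direction. Your plan correctly locates the crux: after extracting a $K_{d,d}$ in some link, you need $d$ distinct third-part vertices, one per symbol, each extending an entire permutation matrix supported on the \emph{same} $2d$ vertices; these $d$ common-neighborhood events are strongly coupled, and \cref{lem:simul-pruning} controls the defect averaged over \emph{all} $d$-tuples $Q$, not the defect of $d$ specific tuples conditioned to lie inside a found $K_{d,d}$. There is also a smaller unresolved point at step one: the pruning of \cref{thm:k-uniform-pruning} costs $\Theta(1/d^{k-1})=\Theta(1/d^2)$ in the exponent, and a ``one-sided'' variant costing only $\Theta(1/d)$ would itself require proof. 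In short, your proposal is a reasonable research program that correctly diagnoses why the conjecture is hard, but it does not close it, and the paper does not either.
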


Note that for every $d\times d$ Latin square we have $d_1(H_L),d_2(H_L)=\Theta(d)$, but \cref{thm:k-unif-turan-main} only shows that $\mathrm{ex}(n,H_L)\leq O(n^{3-c/d^2})$. It is worth pointing out that the 1-skeleton of $H_L$ is $K_{d,d,d}$ which is also the 1-skeleton of $K^{(3)}_{d,d,d}$, the complete 3-partite 3-uniform hypergraph. Indeed, our techniques only show that a hypergraph with $n^{3-c/d^2}$ edges contains $H_L$ because it in fact contains $K^{(3)}_{d,d,d}$ and therefore \textit{every} 3-partite $H$ with at most $d$ vertices in each side.

It would still be interesting to find some Latin square in any hypergraph of this density (not necessarily some specific one), or even some large subset of a Latin square. Along these lines we make the following weaker conjecture that we are still unable to prove.

\begin{conjecture}
There is a constant $c>0$ such that for all $d\geq 4$ the following holds. If $G$ is a (not necessarily linear) 3-uniform hypergraph on $n$ vertices with at least $n^{3-c/d}$ edges, then there is a linear subhypergraph of $G$ with $d$ vertices and at least $d^2/100$ edges.
\end{conjecture}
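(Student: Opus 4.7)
The plan is to combine averaging over codegrees, a K\H{o}v\'ari--S\'os--Tur\'an argument, and a Latin-square-style matching step. First, since $\sum_{\{u,v\}} d_G(u,v) = 3 e(G) \ge 3 n^{3-c/d}$ and every codegree is at most $n$, a standard averaging shows that at least $\Omega(n^{2-c/d})$ unordered pairs $\{u,v\}$ are \emph{heavy}, meaning they have codegree $d_G(u,v) := |\{w : \{u,v,w\}\in E(G)\}|$ at least $n^{1-2c/d}/3$. Letting $L$ denote the graph of heavy pairs, the hypothesis $c<1$ ensures $e(L) \gg n^{2-1/d}$; by K\H{o}v\'ari--S\'os--Tur\'an, $L$ contains a $K_{A,B}$ with $|A|=|B|=\lfloor 2d/5\rfloor$. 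I additionally reserve a disjoint set $T \subseteq V(G) \setminus (A \cup B)$ of size $\lfloor d/5 \rfloor$ so that $|A \cup B \cup T| \le d$.

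The second step is to build a linear subhypergraph on $A\cup B\cup T$ by selecting, for each $t \in T$, a matching $M_t \subseteq A \times B$ such that $\{a,b,t\} \in E(G)$ for every $(a,b) \in M_t$, with the $M_t$ pairwise disjoint as subsets of $A \times B$. A short check shows the resulting $3$-uniform hypergraph on $A \cup B \cup T$ is linear: two edges $\{a,b,t_i\}$ and $\{a',b',t_j\}$ can share a pair only if $t_i = t_j$ and $\{a,b\}\cap\{a',b'\}\neq\emptyset$, in which case the matching property of $M_{t_i}$ forces the edges to coincide. If each $|M_t| \ge d/20$, the total edge count is at least $(d/5)(d/20) = d^2/100$. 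The existence of such matchings should then follow from a random assignment---for each heavy pair $(a,b)$, pick a uniformly random $t\in T$ with $\{a,b,t\}\in E(G)$---combined with either the Lov\'asz Local Lemma or a Hall-type argument, \emph{provided} each heavy pair has at least $\Omega(|T|)$ extensions inside $T$.

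The main obstacle is precisely this provision. For a uniformly random $T$ of size $d/5$, the expected number of extensions of a fixed heavy pair $(a,b)$ landing in $T$ is at least $|T| \cdot n^{-2c/d}/3 = (d/15) n^{-2c/d}$, which drops below $1$ as soon as $n > (d/15)^{d/(2c)}$. Thus, for $n$ above this super-polynomial threshold, no set $T$ of only $\Theta(d)$ vertices can simultaneously contain extensions for a constant fraction of heavy pairs, and the above plan breaks down. A natural remedy is to precede the argument with a stronger pruning step: apply a variant of the dependent random choice of \cref{lem:simul-pruning} to pass to a subhypergraph in which many pairs within a small set already have extensions within that set. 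However, the framework of \cref{sec:pruning} controls common neighborhoods only of subsets of at most $d$ pairs, whereas here one needs simultaneous control of $\Omega(d^2)$ pair-extension events. Designing a strengthened dependent random choice---essentially, one that forces $\Omega(d^2)$ ``Latin-square entries'' to lie inside $O(d)$ vertices at density $n^{3-c/d}$---is, in our view, the principal open problem standing between the above plan and the conjecture.
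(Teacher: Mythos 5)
This statement is a \emph{conjecture} in the paper's concluding remarks, not a theorem; the paper offers no proof of it, so there is no paper proof against which to compare. Your submission is also, by your own account, not a proof but an outline of a plan together with a correct diagnosis of where it breaks down, so the honest verdict is that the conjecture remains open on both sides.

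Your partial argument is sound as far as it goes. The codegree averaging giving $\Omega(n^{2-c/d})$ heavy pairs is correct, and applying K\H{o}v\'ari--S\'os--Tur\'an to the heavy-pair graph $L$ to extract $K_{A,B}$ with $|A|=|B|=\Theta(d)$ works once $c$ is taken small enough (one needs $c$ below roughly $5/2$ for the exponent comparison $n^{2-c/d}$ versus $n^{2-5/(2d)}$). The linearity check for the matching-based hypergraph on $A\cup B\cup T$ is also correct: distinct edges $\{a,b,t\}$ and $\{a',b',t'\}$ sharing two vertices must share $t=t'$ (since an edge contains exactly one vertex of $T$), and then the matching property of $M_t$ forces them to coincide. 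These are all legitimate moves.

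The gap you identify is the real one, and you are right that it is fatal for this plan as stated. The heavy pairs have $\Omega(n^{1-2c/d})$ extensions distributed over the whole vertex set, and a reserved set $T$ of size $\Theta(d)$ is expected to catch only $\Theta(d\cdot n^{-2c/d})$ of them, which is $o(1)$ once $n$ is superpolynomially large in $d$. No random (or deterministic) choice of a $\Theta(d)$-sized $T$ can contain extensions for even a single typical heavy pair, let alone $\Omega(d^2)$ of them simultaneously. Your observation that the pruning machinery of \cref{lem:simul-pruning} controls only $O(1)$-many (in $d$) pair-extension events at once, whereas one would need to force $\Omega(d^2)$ distinct triples into $O(d)$ vertices, pinpoints exactly why the paper's dependent random choice framework does not obviously resolve this. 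In short: you have not proved the conjecture, you say so, and your analysis of the obstruction is accurate and useful.
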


Note that if the linearity constraint on the subhypergraph is omitted then the conjecture is easily seen to be true. Indeed, it follows from a hypergraph generalization of the standard K\H{o}v\'ari--S\'os--Tur\'an argument (first written down by Erd\H{o}s \cite{Erd64}), that any $3$-uniform hypergraph with at least $n^{3-c/d}$ edges contains a copy of $K^{(3)}_{1,(d-1)/2,(d-1)/2}$. This is a (nonlinear) hypergraph with $d$ vertices and $(d-1)^2/4$ edges.


\begin{thebibliography}{10}

\bibitem{AIM15}
{AimPL: Graph Ramsey Theory, available at \url{http://aimpl.org/graphramsey}}.

\bibitem{AKS03}
Noga Alon, Michael Krivelevich, and Benny Sudakov, \emph{Tur\'{a}n numbers of
  bipartite graphs and related {R}amsey-type questions}, Combin. Probab.
  Comput. \textbf{12} (2003), 477--494.

\bibitem{BFS23}
Domagoj Brada{\v{c}}, Jacob Fox, and Benny Sudakov, \emph{Ramsey numbers of
  hypergraphs of a given size}, arXiv:2308.10833.

\bibitem{BJST23}
Domagoj Brada{\v{c}}, Oliver Janzer, Benny Sudakov, and Istv{\'a}n Tomon,
  \emph{The {T}ur{\'a}n number of the grid}, Bulletin of the London
  Mathematical Society \textbf{55} (2023), 194--204.

\bibitem{BE75}
S.~A. Burr and P.~Erd{\H{o}}s, \emph{On the magnitude of generalized {R}amsey
  numbers for graphs}, Infinite and finite sets ({C}olloq., {K}eszthely, 1973;
  dedicated to {P}. {E}rd{\H{o}}s on his 60th birthday), {V}ols. {I}, {II},
  {III}, Colloq. Math. Soc. J\'{a}nos Bolyai, Vol. 10, North-Holland,
  Amsterdam, 1975, pp.~215--240.

\bibitem{CGMS23}
Marcelo Campos, Simon Griffiths, Robert Morris, and Julian Sahasrabudhe,
  \emph{An exponential improvement for diagonal {R}amsey}, arXiv:2303.09521.

\bibitem{CRST83}
C.~Chvat\'{a}l, V.~R\"{o}dl, E.~Szemer\'{e}di, and W.~T. Trotter, Jr.,
  \emph{The {R}amsey number of a graph with bounded maximum degree}, J. Combin.
  Theory Ser. B \textbf{34} (1983), 239--243.

\bibitem{CFR17}
David Conlon, Jacob Fox, and Vojt\v{e}ch R\"{o}dl, \emph{Hedgehogs are not
  colour blind}, J. Comb. \textbf{8} (2017), 475--485.

\bibitem{CFS09}
David Conlon, Jacob Fox, and Benny Sudakov, \emph{Ramsey numbers of sparse
  hypergraphs}, Random Structures Algorithms \textbf{35} (2009), 1--14.

\bibitem{CFS10}
David Conlon, Jacob Fox, and Benny Sudakov, \emph{Hypergraph {R}amsey numbers},
  J. Amer. Math. Soc. \textbf{23} (2010), 247--266.

\bibitem{CFS12}
David Conlon, Jacob Fox, and Benny Sudakov, \emph{On two problems in graph
  {R}amsey theory}, Combinatorica \textbf{32} (2012), 513--535.

\bibitem{CFS15}
David Conlon, Jacob Fox, and Benny Sudakov, \emph{Recent developments in graph
  {R}amsey theory}, Surveys in combinatorics 2015, London Math. Soc. Lecture
  Note Ser., vol. 424, Cambridge Univ. Press, Cambridge, 2015, pp.~49--118.

\bibitem{CJL21}
David Conlon, Oliver Janzer, and Joonkyung Lee, \emph{More on the extremal
  number of subdivisions}, Combinatorica \textbf{41} (2021), 465--494.

\bibitem{CL21}
David Conlon and Joonkyung Lee, \emph{On the extremal number of subdivisions},
  Int. Math. Res. Not. IMRN (2021), 9122--9145.

\bibitem{Erd64}
P.~Erd\H{o}s, \emph{On extremal problems of graphs and generalized graphs},
  Israel J. Math. \textbf{2} (1964), 183--190.

\bibitem{EHR65}
P.~Erd\H{o}s, A.~Hajnal, and R.~Rado, \emph{Partition relations for cardinal
  numbers}, Acta Math. Acad. Sci. Hungar. \textbf{16} (1965), 93--196.

\bibitem{ESi66}
P.~Erd\H{o}s and M.~Simonovits, \emph{A limit theorem in graph theory}, Studia
  Sci. Math. Hungar. \textbf{1} (1966), 51--57.

\bibitem{Erd47}
P.~Erd\"{o}s, \emph{Some remarks on the theory of graphs}, Bull. Amer. Math.
  Soc. \textbf{53} (1947), 292--294.

\bibitem{Erd67}
P.~Erd{\H{o}}s, \emph{Some recent results on extremal problems in graph theory.
  {R}esults}, Theory of {G}raphs ({I}nternat. {S}ympos., {R}ome, 1966), Gordon
  and Breach, New York; Dunod, Paris, 1967, pp.~117--123 (English); pp.
  124--130 (French).

\bibitem{ESt46}
P.~Erd\"{o}s and A.~H. Stone, \emph{On the structure of linear graphs}, Bull.
  Amer. Math. Soc. \textbf{52} (1946), 1087--1091.

\bibitem{ESz35}
P.~Erd\"{o}s and G.~Szekeres, \emph{A combinatorial problem in geometry},
  Compositio Math. \textbf{2} (1935), 463--470.

\bibitem{FS09}
Jacob Fox and Benny Sudakov, \emph{Two remarks on the {B}urr-{E}rd{\H{o}}s
  conjecture}, European J. Combin. \textbf{30} (2009), 1630--1645.

\bibitem{GRR00}
R.~L. Graham, V.~R\"{o}dl, and A.~Ruci\'{n}ski, \emph{On graphs with linear
  {R}amsey numbers}, J. Graph Theory \textbf{35} (2000), 176--192.

\bibitem{GRR01}
R.~L. Graham, V.~R\"{o}dl, and A.~Ruci\'{n}ski, \emph{On bipartite graphs with
  linear {R}amsey numbers}, Combinatorica \textbf{21} (2001), 199--209, Paul
  Erd\H{o}s and his mathematics (Budapest, 1999).

\bibitem{GRS90}
Ronald~L. Graham, Bruce~L. Rothschild, and Joel~H. Spencer, \emph{Ramsey
  theory}, second ed., John Wiley \& Sons, Inc., New York, 1990.

\bibitem{Jan19}
Oliver Janzer, \emph{Improved bounds for the extremal number of subdivisions},
  Electron. J. Combin. \textbf{26} (2019), Paper No. 3.3, 6.

\bibitem{Jan20}
Oliver Janzer, \emph{The extremal number of the subdivisions of the complete
  bipartite graph}, SIAM J. Discrete Math. \textbf{34} (2020), 241--250.

\bibitem{Jan21}
Oliver Janzer, \emph{The extremal number of longer subdivisions}, Bull. Lond.
  Math. Soc. \textbf{53} (2021), 108--118.

\bibitem{JS22}
Oliver Janzer and Benny Sudakov, \emph{On the {T}ur\'an number of the
  hypercube}, arXiv:2211.02015.

\bibitem{Kev11}
Peter Keevash, \emph{Hypergraph {T}ur\'{a}n problems}, Surveys in combinatorics
  2011, London Math. Soc. Lecture Note Ser., vol. 392, Cambridge Univ. Press,
  Cambridge, 2011, pp.~83--139.

\bibitem{KR04}
A.~V. Kostochka and V.~R\"{o}dl, \emph{On graphs with small {R}amsey numbers.
  {II}}, Combinatorica \textbf{24} (2004), 389--401.

\bibitem{KR06}
A.~V. Kostochka and V.~R\"{o}dl, \emph{On {R}amsey numbers of uniform
  hypergraphs with given maximum degree}, J. Combin. Theory Ser. A \textbf{113}
  (2006), 1555--1564.

\bibitem{KS03}
Alexandr Kostochka and Benny Sudakov, \emph{On {R}amsey numbers of sparse
  graphs}, Combin. Probab. Comput. \textbf{12} (2003), 627--641.

\bibitem{Lee17}
Choongbum Lee, \emph{Ramsey numbers of degenerate graphs}, Ann. of Math. (2)
  \textbf{185} (2017), 791--829.

\bibitem{Man07}
W.~Mantel, \emph{Problem 28}, Wiskundige Opgaven \textbf{10} (1907), 60--61.

\bibitem{MT01}
Bojan Mohar and Carsten Thomassen, \emph{Graphs on surfaces}, Johns Hopkins
  University Press, 2001.

\bibitem{MS20}
Dhruv Mubayi and Andrew Suk, \emph{A survey of hypergraph {R}amsey problems},
  Discrete mathematics and applications, Springer Optim. Appl., vol. 165,
  Springer, Cham, 2020, pp.~405--428.

\bibitem{Ram29}
F.~P. Ramsey, \emph{On a problem of formal logic}, Proc. London Math. Soc. (2)
  \textbf{30} (1929), 264--286.

\bibitem{Spe75}
Joel Spencer, \emph{Ramsey's theorem---a new lower bound}, J. Combinatorial
  Theory Ser. A \textbf{18} (1975), 108--115.

\bibitem{ST20}
Benny Sudakov and Istv\'{a}n Tomon, \emph{Tur\'{a}n number of bipartite graphs
  with no {$K_{t,t}$}}, Proc. Amer. Math. Soc. \textbf{148} (2020), 2811--2818.

\bibitem{Tur41}
Paul Tur\'{a}n, \emph{Eine {E}xtremalaufgabe aus der {G}raphentheorie}, Mat.
  Fiz. Lapok \textbf{48} (1941), 436--452.

\bibitem{Ver18}
Roman Vershynin, \emph{High-dimensional probability: {A}n introduction with
  applications in data science}, vol.~47, Cambridge University Press, 2018.

\end{thebibliography}
\end{document}